\pgfplotsset{compat=newest}
\newtheorem{theorem}{Theorem}[section]
\newtheorem{lemma}[theorem]{Lemma}
\newtheorem{remark}[theorem]{Remark}
\def\thesection{\arabic{section}}
\def\NN{\mathbb{N}}
\def\RR{\mathbb{R}}
\def\ZZ{\mathbb{Z}}
\definecolor{mygreen}{rgb}{0,0.7,0}
\definecolor{mygreen}{rgb}{0,0.4,0}
\begin{document}

\title{\sf  Reduced modelling and optimal control of epidemiological individual-based models with contact heterogeneity}
\author{C. Court\`es\footnote{IRMA, Universit\'e de Strasbourg, CNRS UMR 7501, Inria, 7 rue Ren\'e Descartes, 67084 Strasbourg, France ({\tt clementine.courtes@unistra.fr}).}
\and E. Franck\footnote{Inria, IRMA, Universit\'e de Strasbourg, CNRS UMR 7501, 7 rue Ren\'e Descartes, 67084 Strasbourg, France ({\tt emmanuel.franck@unistra.fr}).}
\and K. Lutz\footnote{Univ Lyon, Ecole Centrale de Lyon, CNRS UMR 5208, Institut Camille Jordan, F-69134 Ecully, France, ({\tt killian.lutz@ecl19.ec-lyon.fr}).}
\and L. Navoret\footnote{IRMA, Universit\'e de Strasbourg, CNRS UMR 7501, Inria, 7 rue Ren\'e Descartes, 67084 Strasbourg, France ({\tt laurent.navoret@unistra.fr}).}
\and Y. Privat\footnote{IRMA, Universit\'e de Strasbourg, CNRS UMR 7501, Inria, 7 rue Ren\'e Descartes, 67084 Strasbourg, France ({\tt yannick.privat@unistra.fr}).}~~\footnote{Institut Universitaire de France (IUF).}
}

\maketitle

\begin{abstract}
Modelling epidemics via classical population-based models suffers from shortcomings that so-called individual-based models are able to overcome, as they are able to take heterogeneity features into account, such as super-spreaders, and describe the dynamics involved in small clusters. In return, such models often involve large graphs which are expensive to simulate and difficult to optimize, both in theory and in practice.

By combining the reinforcement learning philosophy with reduced models, we propose a numerical approach to determine optimal health policies for a stochastic epidemiological graph-model taking into account super-spreaders. More precisely, we introduce a deterministic reduced population-based model involving a neural network, and use it to derive optimal health policies through an optimal control approach. It is meant to faithfully mimic the local dynamics of the original, more complex, graph-model. Roughly speaking, this is achieved by sequentially training the network until an optimal control strategy for the corresponding reduced model manages to equally well contain the epidemic when simulated on the graph-model.

After describing the practical implementation of this approach, we will discuss the range of applicability of the reduced model and to what extent the estimated control strategies could provide useful qualitative information to health authorities.

\end{abstract}



\noindent {\bf Keywords: }Individual-based models, Super-spreaders, Reduced models, Optimal control,
Reinforcement Learning, Neural network\\

\noindent {\bf 2020 AMS subject classifications: }92B05, 49M05, 68T07


\section{Introduction}
\subsection{Population versus Individual-based models}

Modelling the spread of epidemics correctly is of paramount importance for defining health policy to control their development. Models allow to estimate the so-called basic reproduction ratio $\mathcal{R}_{0}$ that indicates whether the epidemic is growing or not and then propose optimal policies to limit the saturation of hospital services for instance. In such a program, one difficulty is to take into account the presence of the so-called {\it super-spreaders}. This refers to any individual who is likely to infect many more people than a generic person: indeed, the distribution of the number of contacts in the population is very heterogeneous. In the seminal work \cite{lloyd-smithSuperspreadingEffectIndividual2005}, the authors show that epidemics tend to be {\it rarer but more explosive} with super-spreaders. Indeed, super-spreaders tend to be infected in the early stages of an epidemic. 

To tackle this problem, several levels of descriptions can be considered \cite{Kiss_2017}. The two extreme types of models are population-based and individual-based models. Population-based or mean-field models, like the SIR one, are the simplest descriptions of epidemics: they describe the time evolution of the total number of susceptible (S), infected (I), retired (R) people or other categories. At the opposite level of description, \emph{individual-based models} are the most accurate: they describe the stochastic temporal evolution of the status (susceptible, infected, retired or other) of each individual by taking into account the contact graph between individuals. 

Individual-based models  are much better suited to describe the effect of {\it super-spreaders}. Indeed, they are based on an accurate description of the contact graph between individuals: each node corresponds to an individual and each edge to the contact between two individuals. It is therefore easy to include contact heterogeneities  by considering contact graphs with prescribed distributions of node degrees \cite{rafoSimpleEpidemicNetwork2020, kimAgentBasedModelingSuperSpreading2018}. The individual-based models are then constructed as a continuous-time Markov process, in which each individual can evolve between the different status (susceptible, infected, retired) at random times. Thus the infection of one susceptible individual depends on the number of connected infected neighbours as well as the individual transmission rate $\beta_{\rm ind}$ while the transition to the retired state depends only on the recovery rate $\gamma$. 

Individual-based models allow for more accurate modelling, but require more computational resources to simulate large or complex contact graphs \cite{anOptimizationControlAgentBased2017}. It is notable that models corresponding to intermediate descriptions such as branching or percolation processes, where the state process is simplified, have been proposed. For instance, in  \cite{lloyd-smithSuperspreadingEffectIndividual2005, garskeEffectSuperspreadingEpidemic2008}, the authors used this kind of models to describe the number of secondary infections. In some cases, population-based models can also be derived analytically from individual-based models, which is of interest for simulations. First, assuming some independence between the statuses of each individual, the individual-based stochastic models can be approximated by a deterministic model, where the status of each individual follows SIR differential systems coupled to those of other individuals \cite{Kiss_2017}. Then, making the additional assumption that the degree distribution has a small variance, this model can be further simplified and we recover the classical population based SIR models. Without this assumption of small variance, SIR models structured by contact numbers can be derived leading to larger models. 

Thus, to incorporate the effect of super-spreaders in population-based models, a possible approach is to extend the number of categories: for each health status, we can consider several compartments associated with sub-populations having different amount of contacts \cite{kissEffectContactHeterogeneity2006}. Another proposed strategy is to consider a single additional compartment with specific epidemiological properties at the population-level (e.g. recovery rates, transmission rates, etc.) \cite{mkhatshwaModelingSuperspreadingEvents2010}. Other studies have proposed to take super-spreaders into account through a spatial description of the epidemic \cite{fujieEffectsSuperspreadersSpread2007}. 

The infection probability then depends on the distance between two individuals, and the difference between normal individuals and super-spreaders is taken into account via this dependence. In another direction, recurrent neural networks have been proposed to improve population-based models \cite{bhouriCOVID19DynamicsUS2021}:  their long short-term memory is used to identify the transmission rate as a function of the observed mobility and social behaviour trends.

Despite the advantages of population-based models in terms of simplicity, only individual-based models are really able to handle heterogeneous contact distributions and describe epidemics with few individuals and stochastic effects.
\subsection{Optimal control issues}
\textit{In this work, we are interested in proposing a possible strategy to define an optimal control for an individual-based model. The optimal control problem we consider is to keep the number of infected individuals below a threshold by adjusting the average transmission rate and the parameters of the contact distribution over time.} 

Specifically, we consider an individual-based model where contacts follow a negative binomial distribution. Such a distribution is used to model populations with super-spreaders \cite{pmid:16292310} and is parametrized by its mean value $\alpha>0$ and the so-called \emph{dispersion coefficient} $\kappa>0$. Misleadingly, since the variance in the contacts distribution is $\alpha+\alpha^2/\kappa$, a \emph{low value} of the dispersion coefficient $\kappa$ is associated with a \emph{high variance} in the distribution of contacts and thus with the presence of super-spreaders. The dispersion control acts mainly on the super-spreaders while the control of the average transmission rate $\beta = \alpha \beta_{\rm ind} $ is a uniform control on the population. We are therefore interested in defining an optimal pair $(\beta(\cdot),\kappa(\cdot)) = (b(\cdot)\beta_{0},k(\cdot)\kappa_{0})$, with the smallest deviation from the initial values $(\beta_{0},\kappa_{0})$, in order to keep the number of infected people below a certain threshold.

To define a control of the stochastic individual-based model, the classical methods use dynamic programming algorithms.  Indeed, the problem can be formulated as a Markov Decision Process (MDP) \cite{sutton2018reinforcement}, with given probability transitions between the $3^N$ states of the SIR model, where $N$ denotes the number of individuals. For large $N$, it is no more possible to easily deal with such a model completely and a reinforcement learning approach should be used. In  \cite{10.1613/jair.1.12632}, a Deep-Q reinforcement learning algorithm is used for large graphs using a global control on agents (partially observable MDP). For such global control problem, cooperative multi-agent approaches can also be used \cite{DBLP:journals/corr/abs-2004-12959}. Another approach is to consider reinforcement learning based on a reduced model. As proposed in \cite{anOptimizationControlAgentBased2017}, it may be worthwhile to build on simpler models, like population-based ones, for which the standard control theory framework applies. The optimal control strategy for the reduced model is then used as the starting point for designing controls for the individual-based model.  

Concerning the control of population-based model, references are plentiful: they are generally based on the use of optimality conditions such as the Pontryagin Maximum Principle (PMP). If few of them propose an analytical design of the controls (see for instance \cite{MR4198237,MR4255688}), many introduce adapted optimization algorithms based either on a discretization of the complete problem (discretize then optimize, see e.g. \cite{MR4363007}) or an algorithm on the continuous problem applied on a discretized version of the model (optimize then discretize, see e.g. \cite{MR4198237,MR4255688}). 

\subsection{Organization of the article}

In this work, the proposed strategy for designing a control of the individual-based model is decomposed into the three following steps:
\begin{enumerate}
\item First, learn a reduced population-based SIR model via data coming from numerical simulations of the individual-based model using neural networks. 
\item Then, define a control of the parameters of the population-based models.
\item Finally, use a reinforcement algorithm to improve the population-based model around the controlled solution and thus the control itself.
\end{enumerate} 
The data-driven population-based SIR model is intended to capture the effect of the contact distribution heterogeneity and stochastic effects due to relatively small population size. The model thus depends on the \emph{dispersion parameter} $\kappa$, the \emph{transmission rate} $\beta$ and the \emph{relative size of the population} $n$, a coefficient depending on $N$ describing the closeness to the large population regime.

The neural network is trained to compute the time variation of the number susceptible people following an ordinary differential equation of the form: $S' = -F_\theta(S,I;n,\beta,\kappa)$, where $\theta$ denotes a vector of parameters. Note that throughout this study, the recovery rate $\gamma$ is fixed, equal to $1/6$ day$^{-1}$. Then, the optimal control of the data-driven population-based SIR model is defined by means of an optimal control algorithm. This will define time-varying parameters $(\beta(t),\kappa(t))$. However, since the learned population-based model is not a priori trained with such time varying parameters, the latter control is not well adapted to the underlying individual-based model. This is why the reinforcement strategy is essential to obtain a meaningful control with respect to the individual-based model. 

The outline of the article is the following. In the first section, we present how the data-driven SIR model is constructed from the data using a classical multi-perceptron neural network. We show that this model is interesting on its own to evaluate epidemiological quantities, especially in parameter regimes not covered by the classical SIR model. The next section is then devoted to the optimal control method. A theoretical analysis is performed to show that the control of the data-driven SIR model is well defined. Then the reinforcement strategy is detailed. Finally, several numerical tests are performed to assess the validity of the whole methodology. Some appendices conclude this paper by detailing technical points on the individual-based model (Appendix \ref{apdx:mean_traj_calculation}), the reduced data-driven population-based model (Appendix \ref{apdx:rigorous_R0_derivation}), some rigorous proofs of the control section (Appendix \ref{apdx:well_posed_controlled_system}) and numerical algorithms (Appendix \ref{apdx:numerical_implementation}).

\section{From an individual-based to a data-driven population-based SIR model}

In this section, we first introduce the individual-based SIR model, hereafter denoted (IBM), which enables to model epidemic dynamics with contact heterogeneity. Then we present our data-driven approach to learn the reduced population-based dynamics of the total number of safe, infected and recovered people.

\subsection{Individual-based SIR model with contact heterogeneity (IBM)}
\label{sec:IBM}

The individual-based SIR model consists in a graph with $N$ vertices. Each vertex represents one individual, whose epidemic state over time is denoted: $X_{j}(t) \in \{ s, i, r\}$ for $t \geqslant 0$, for susceptible ($s$), infected ($i$) and retired ($r$). The edges of the graph represent the contacts between individuals: the number of contacts of the $j$-th individual is denoted $\nu_{j} \in \mathbb{N}$. Then the dynamics is described by a continuous-time Markov process, whose two main parameters are the individual transmission rate $\beta_{\rm ind} > 0$ and the recovery rate $\gamma > 0$. 

One individual evolves in time from the susceptible  to the infected state ($s\to i$) and then from the infected state to the recovered one  ($i\to r$). The change of the individual states in the graph occurs one by one at random times $(T^{m})_{m \in \mathbb{N}}$. Given the graph state at time $T^{m}$, the next time $T^{m+1}$ and the associated transition is defined as follows. We assign random clocks $C_{j}$ to any states and these clocks follow exponential distributions. If the $j$-th individual is infected then the exponential distribution has rate $\gamma$. If the $j$-th individual is susceptible, then the exponential distribution has rate $\beta d_{j}$ where  $d_{j}$ is the number of its infectious contacts and $\beta=\alpha\beta_{\rm ind}$ is the mean transmission rate. Then the next transition occurs for the $j^{\ast}$-th individual  at time $T^{m+1} = T^{m}+ C_{j^{\ast}}$, where $j^{\ast}$ corresponds to the smaller clocking time: $C_{j^{\ast}} = \min_{j} C_{j}$. With such dynamics, the more contacts one individual has with infected neighbors, the more likely he is to be infected in turn.

To investigate the role of super-spreaders in the dynamics, we consider a heterogeneous distribution of contacts. Thus, the edges of the graph are distributed such that the number of contacts $\nu_{j}$ of the $j$-th individual follow a (generalized) negative binomial also named P\'olya distribution\footnote{The probability distribution of the P\'olya distribution writes: $P(\nu_{j} = k) =  \frac{\Gamma(\alpha+k)}{k! \Gamma(\alpha)} (1-p)^{k} p^{\alpha} $ for all $k \in \mathbb{N}$, with $p = \frac{\alpha}{\alpha+\kappa}$.}:
\begin{equation*}
\nu_{j} \sim \mathcal{BN}\left(\kappa, \frac{\kappa}{\alpha+\kappa}\right),
\end{equation*}
where $\alpha > 0$ denotes the average number of contacts and $\kappa > 0$ is the dispersion parameter. The mean of the distribution is $\alpha$ and the variance equals $\alpha + \alpha^{2}/\kappa = \alpha (1+ \alpha/\kappa)$. Thus, a \textbf{small dispersion coefficient $\kappa$} corresponds to a large variance and so to the \textbf{existence of super-spreaders} (see Figure~\ref{fig:Loi_Poisson_Neg_bino}). For large $\kappa$, the P\'olya distribution converges (in law) towards the Poisson one used classically to model an homogeneous population (see Figure~\ref{fig:Loi_Poisson_Neg_bino}). With this distribution, the heterogeneity of the contacts relatively to the average number of contacts is parametrized by $\alpha/\kappa$. In practice, in order to fit this distribution, the edges are constructed thanks to the Molloy-Reed algorithm \cite{MolloyReed1998}.

The possible controls of this model can either be on the individual transmission rate $\beta_{\rm ind}$ (by imposing masks, for instance) or on the contact distribution parameters $(\alpha, \beta)$ (with confinements or restaurant closures). However, the control of $\beta_{\rm ind}$ and $\alpha$ are quite similar as they both modify the mean transmission rate $\beta = \alpha \beta_{\rm ind}$, which is the key parameter for epidemics developments. Thus the two main parameters that we are aiming to control are:
\begin{enumerate}
\item the mean transmission rate $\beta = \alpha \beta_{\rm ind}$,
\item the dispersion coefficient $\kappa$,
\end{enumerate}
while the recovery rate $\gamma$ and the population size $N$ are two given quantities.

The model is one of the simplest model for epidemic dynamics on graphs. It is simulated by using a Gillepsie algorithm \cite{GILLESPIE1976403}. We refer to \cite{Kiss_2017} for more details. We performed numerical simulations for this model by using the Python packages EpidemicsOnNetworks \cite{millerEoNEpidemicsNetworks2019} and NetworkX \cite{hagbergaricandswartpieterandschultdanielExploringNetworkStructure2008}. At time $t=0$, a given proportion of states are initialized randomly as infected, the other ones being considered susceptible.

\subsection{Average dynamics and stochastic delay} 

For modelling and control purposes, we are interested in deriving a population-based model which approximates the dynamics generated by the individual-based one. We are thus looking at the dynamics of the average number of susceptible, infected and recovered individuals in the graph:
\begin{equation*}
S(t) = \frac{1}{N}\sum_{j=1}^{N} P(X_{j}(t) = s),\quad I(t) = \frac{1}{N}\sum_{j=1}^{N} P(X_{j}(t) = i),\quad R(t) = \frac{1}{N}\sum_{j=1}^{N} P(X_{j}(t) = r).
\end{equation*}
To this end, we average several time series associated with the same set of parameters $(\beta, \gamma, \kappa, N)$.

In settings in which population size is particularly small or contact heterogeneity drives the epidemic, two main difficulties arise: (i) some simulations lead to \emph{immediate extinctions} whereas the others lead to outbreaks, (ii) \emph{randomness} in the time of the epidemic \emph{onset}. Consequently, as illustrated in Figure \ref{fig:drawbacks_naive_mean}, computing the average trajectory via a naive average frequently leads to severe under estimations of the total number of infected individuals $I(t)$, as already noted in \cite[Appendix~A.2]{Kiss_2017}.  One standard way to overcome this issue is to compute the standard pointwise average only after having time-translated the time series such that the outbreaks occur all at the same time. Details on the calculation of the average trajectories can be found in Appendix \ref{apdx:mean_traj_calculation}. Though it seems that this method \emph{most} of the time solves the above mentioned issues and gives robust results, as a safeguard individual stochastic trajectories will also be shown when plotting results. 

If we do not take into account this time shift, it is shown in \cite{Kiss_2017} that the averaged quantities solve the following differential equations: 
\begin{align*}
&S' = - \beta_{\rm ind} [SI],\\
&I' = \beta_{\rm ind} [SI] - \gamma I,\\
&R' =  \gamma I,
\end{align*}
where the quantity $[SI]$ denotes the average number of edges connecting an infected and a susceptible individual. Obtaining a closed system requires a relation between $[SI]$ and the variables $S, I$. For a homogeneous contact graph, the relation $[SI]= \alpha S I / N$ holds in the large population limit. However, as we are considering contact heterogeneity, this relation is no more valid. It is therefore necessary to establish a valid closure for time-shifted dynamics with contact heterogeneity.

\begin{figure}[ht!]
    \centering
    \includegraphics[scale=.5]{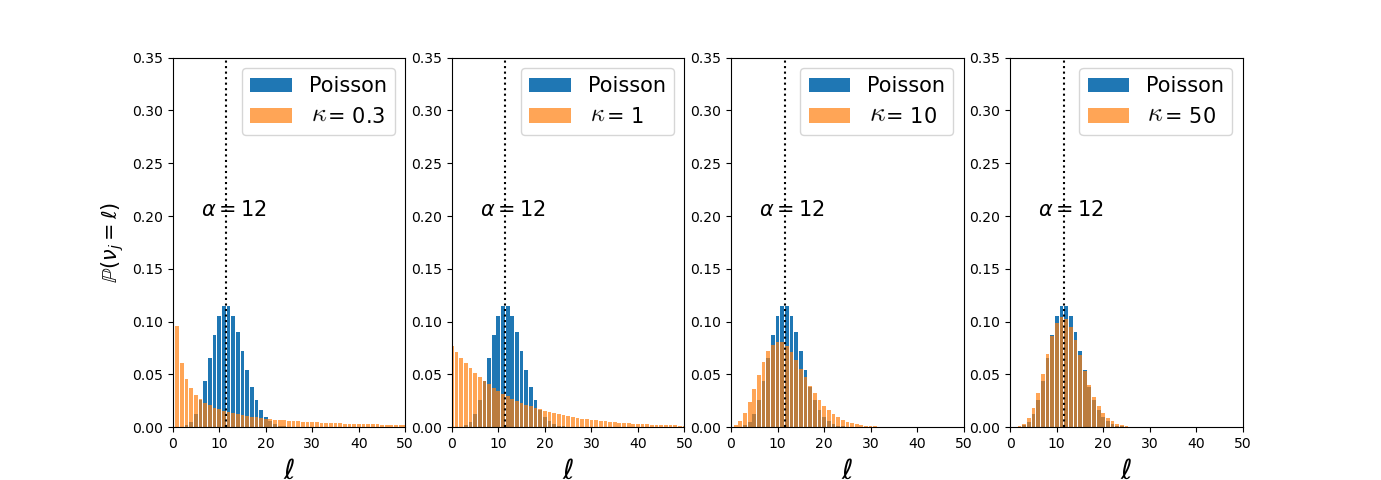}
    \caption[flushleft]{Convergence of the P\'olya distribution to the Poisson one with respect to parameter~$\kappa$.}
    \label{fig:Loi_Poisson_Neg_bino}
\end{figure}
\begin{figure}[h!]
    \centering
    \includegraphics[scale=.3]{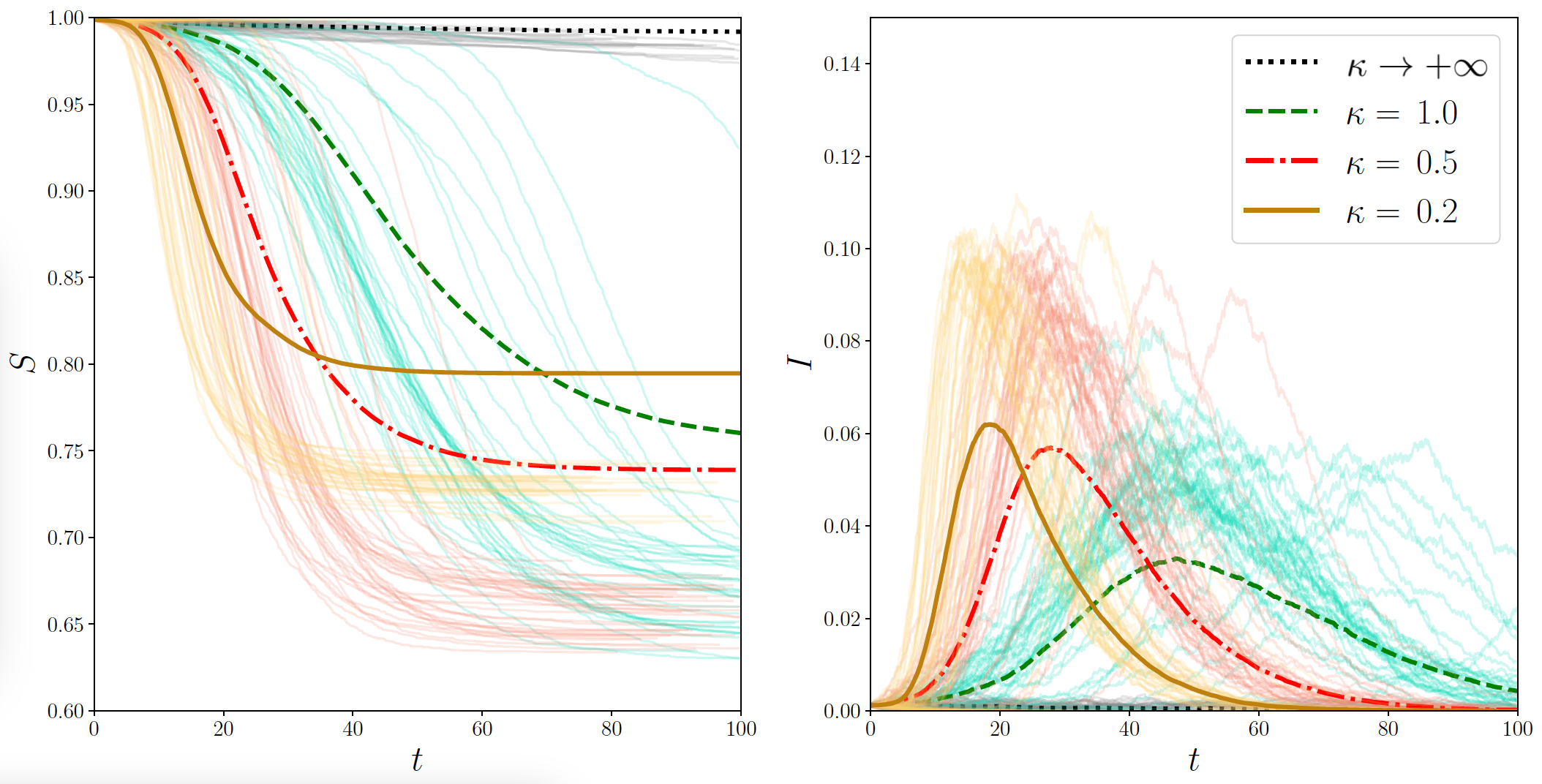}
    \caption[flushleft]{Example of simulations illustrating the insufficiency of a naive average: the delays in the start of the epidemic due to stochasticity lead to an underestimation of the punctual number of infected people. Also it illustrates the importance of modelling the dispersion since here, in the homogeneous case ($\kappa = +\infty$, corresponding to a Poisson distribution), there is immediate extinction. $(n,\beta, \gamma, i_0)=(.25,.15,.15,.001)$.  }
    \label{fig:drawbacks_naive_mean}
\end{figure}

\subsection{Data-driven population-based model \eqref{eq:learned_model}}
\label{sec:learning_global_model}

This subsection deals with the construction of  a \emph{deterministic} population-based model, which faithfully approximates the dynamics of the stochastic individual-based model (IBM) discussed in Section \ref{sec:IBM}. Using supervised machine learning techniques, the main goal is to capture, through an SIR-like system of autonomous differential equations, the effects of contact heterogeneity and population size on the dynamics of an epidemic.  More precisely, we assume the following relationships hold between the state variables $S,I,R$ and their time derivatives:
\begin{align}\tag{RM$_\theta$}
\begin{array}{c c c}
				S' &=&-F_\theta(S,I;n,\beta,\kappa),\\
				I'  &= & F_\theta(S,I;n,\beta,\kappa)-\gamma I,\\
				R' &=& \gamma I,
		\end{array}
		\label{eq:learned_model}
\end{align}
where $\gamma$ is still the individual recovery rate and the function $F_\theta:\RR ^5\to\RR$ is a parametrized \emph{incidence} function \cite{martchevaIntroductionMathematicalEpidemiology2015} whose purpose is to capture the infection process. Its inputs are assumed to be both the instantaneous proportion of susceptible $S$ and infected $I$ individuals, as well as three positive parameters: the population size ratio $n =\min \{1, N/N_{\max}\}$, the mean transmission rate $\beta$ and the dispersion coefficient $\kappa$. The introduction of the parameter $N_{\max}$, taken equal to $20,000$, allows us to extend our approach to large population sizes. From a practical point of view, we choose $N_{\max}$ empirically in such a way that the population dynamic does not vary anymore when increasing $N_{\max}$. In other words, $n$ is a ratio describing the closeness to the large population regime.

In order to ensure that the state variables remain in the interval $[0,1]$, it is further assumed that the incidence function writes:
\begin{equation}\label{Ftheta_part}
		F_\theta(S,I;n,\beta,\kappa) = f_\theta(S,I;n,\beta,\kappa)\, SI,
\end{equation}
where $f_\theta:\RR ^5\to\RR$ is another function, with the same inputs, called the \emph{transmission rate function}. 
For the sake of clarity, we postpone to Appendix~\ref{apdx:well_posed_controlled_system} details on 
regularity assumptions on both functions $f_\theta$ and $F_\theta$, as well as well-posedness issues (existence and uniqueness of an absolutely continuous global solution) of System~\eqref{eq:learned_model}.

From now on, we will assume that such regularity properties on $F_\theta$ are satisfied so that System~\eqref{eq:learned_model} has a unique solution which is moreover Lipschitz, with non-negative components. Note that because $S + I + R =1$ (remember that these quantities are proportions), all state variables of \eqref{eq:learned_model} are bounded from above by 1 and one of the three equations in \eqref{eq:learned_model} is redundant. Hence, the equation corresponding to recovered individuals is hereafter omitted. 

\paragraph{Neural network structure.}  The function $f_{\theta}$ is built by means of a fully-connected neural network (or multilayer perceptron), which shows a great ability to learn non-linear functions \cite{zhangDiveDeepLearning2021,goodfellow2016deep}. The function is then defined by composition of layers: each layer performs an affine transformation on its inputs and then applies a non-linear function (a so-called activation function) which is determined in advance.  All the coefficients involved in the affine transformations are thus parameters of the function  $f_{\theta}$ and correspond to the vector-valued parameter $\theta$.  The neural network structure is then determined by so-called hyperparameters, for instance, the number of layers, the input and output sizes of each layer and the activation functions used. The hyperparameters that we have selected are specified in Table~\ref{tab:keras_hyperparam} (left column).

\begin{table}[h]
    \centering
    \begin{tabular}{ll|ll}
        \toprule
        Hyperparameters        & Values                      & Learning parameters           & Values \\
        \midrule
        Neurons per layer      & 64 / 128 / 64 / 16        & Initial learning rate     & $10^{-3}$ \\ 
        No. inputs / outputs   & 5 / 1                       & Validation split          & 15\% \\
        Inputs normalization   & Centered and reduced        & Cost function             & Mean squared error \\
        Initialization         & Orthogonal                  & Optimizer                 & Adam \\
        Activations [Last]     & ReLu [Linear]               & Batch size                & 512 \\                       
        Learning rate schedule & Exponential                 & Epoch                     & 15 \\
        \bottomrule
    \end{tabular}
    \caption[flushleft]{Practical details regarding the learning of the transmission rate function $f_\theta$ using the open-source Python library Keras \cite{franccoischolletandothersKeras2015}.}
    \label{tab:keras_hyperparam}
\end{table}

\paragraph{Learning the transmission rate function from data.} The parameter $\theta$ is then set in such a way that the transmission rate function $f_{\theta}$ best approximates the rate observed on the individual-based model simulations.  More precisely, the parameters $\theta$ are found by regression, i.e. by minimizing the mean squared error:
\begin{equation*}
L(\theta) = \sum_{(\tilde S, S,I;n,\beta,\kappa) \in \mathcal{D}}\left\| f_{\theta}(S,I;n,\beta,\kappa) - \frac{\tilde S-S}{\Delta t\, S I}\right\|^{2},
\end{equation*}
where $\mathcal{D}$ denotes the data set composed of samples $(\tilde S, S,I;n,\beta,\kappa)$, where $S, I$ are the average number of the susceptible and infected populations at a given time $t$, $\tilde S$ the value at time $t+\Delta t$, obtained after averaging individual-based simulations with parameters $(n,\beta, \kappa)$. The hope is that, by considering a diverse enough data set, the function corresponding to an optimal parameter will manage to capture the  underlying trend which relates the inputs to the output, especially for input values lacking in the data set. Details about the practical implementation of the learning algorithm are displayed in Table~\ref{tab:keras_hyperparam} (right column).

To generate the data set, parameters $(n, \beta, \kappa,I(0))$ are chosen randomly according to the distributions given in Table \ref{tab:param_range}. The susceptible state at $t=0$ are then $S(0)=1-I(0)$. Then, we 
run the individual-based model over a given time interval. We then average the time-series of the corresponding susceptible and infected populations over 50 simulations at discrete times $t^{m} = m\Delta t$. For the simulations, we choose $\Delta t\simeq 0.28$.
Repeating this process a significant number of times and storing the results makes up the \emph{training data set} $\mathcal{D}$ which, in our case, contains about 7.4 millions of samples.

\begin{table}[h]
    \centering
    \begin{tabular}{ccccl}
        \toprule
        Parameters  & Lower bound  &  Upper bound & Units      & Interpretation            \\
        \midrule
        $n$         & 0.1           &  1           & -           & Population size ratio           \\
        $\beta$     & 0.075         & 0.9          & days$^{-1}$ & Transmission rate \\
        $\kappa$    & 0.1           &  10          & -           & Dispersion coefficient    \\
        $I(0)$ & $10^{-4}$ & $10^{-3}$ & - & Initial proportion of infected people \\
        \bottomrule
    \end{tabular}
    \caption[flushleft]{Samples used to learn the function $f_\theta$ have their input parameters randomly drawn (uniformly for $n,\beta$ and log-uniformly for $\kappa$ and $I(0)$) in a subset of their possible values.}
    \label{tab:param_range}
\end{table}

\subsection{Model validation} 
\label{sec:model_validation}
Once the parameters defining the transmission rate function $f_\theta$ have been determined, the validation step is carried out to prevent over-fitting and evaluate the model accuracy. 
To do so, we select unseen values of $(n,\beta,\kappa)$ in the ranges of interest (see Table \ref{tab:param_range}) for which the population-based model \eqref{eq:learned_model} is numerically solved over a given time horizon. Then, the corresponding trajectory of $(S,I)$ is compared to the one simulated via the (IBM) initialized with the same parameters. In each case, the time-series of the number of susceptible and infected individuals are respectively compared in light of both qualitative behaviour and quantitative error criterion. More precisely, to perform qualitative comparison, we were mostly interested in the ability of the \eqref{eq:learned_model} learned model to correctly predict \emph{immediate disease extinctions}, even in parameter regimes in which stochasticity plays a significant role (low dispersion coefficient $\kappa$ or small population size ratio $n$). 

 On Figure \ref{fig:learned_mdl_validation_cstant_parameters}, we illustrate our approach on several examples: we compare for different parameters ($n$, $\beta$ and $\kappa$) the average of the IBM and our data-driven population-based model. The outcome is pretty convincing and the reduced model shows decent accuracy for a wide range of parameter values (low population size ratio and dispersion coefficient, etc.). These results suggest that the model captures well the dynamics involved in a heterogeneous epidemic and might be used to analyze those. The results also show that in many cases, the bifurcation outbreak/extinction is correctly captured by the model. We further discuss this issue at the end of the next section.

\begin{figure}
    \centering
    \includegraphics[scale=.3]{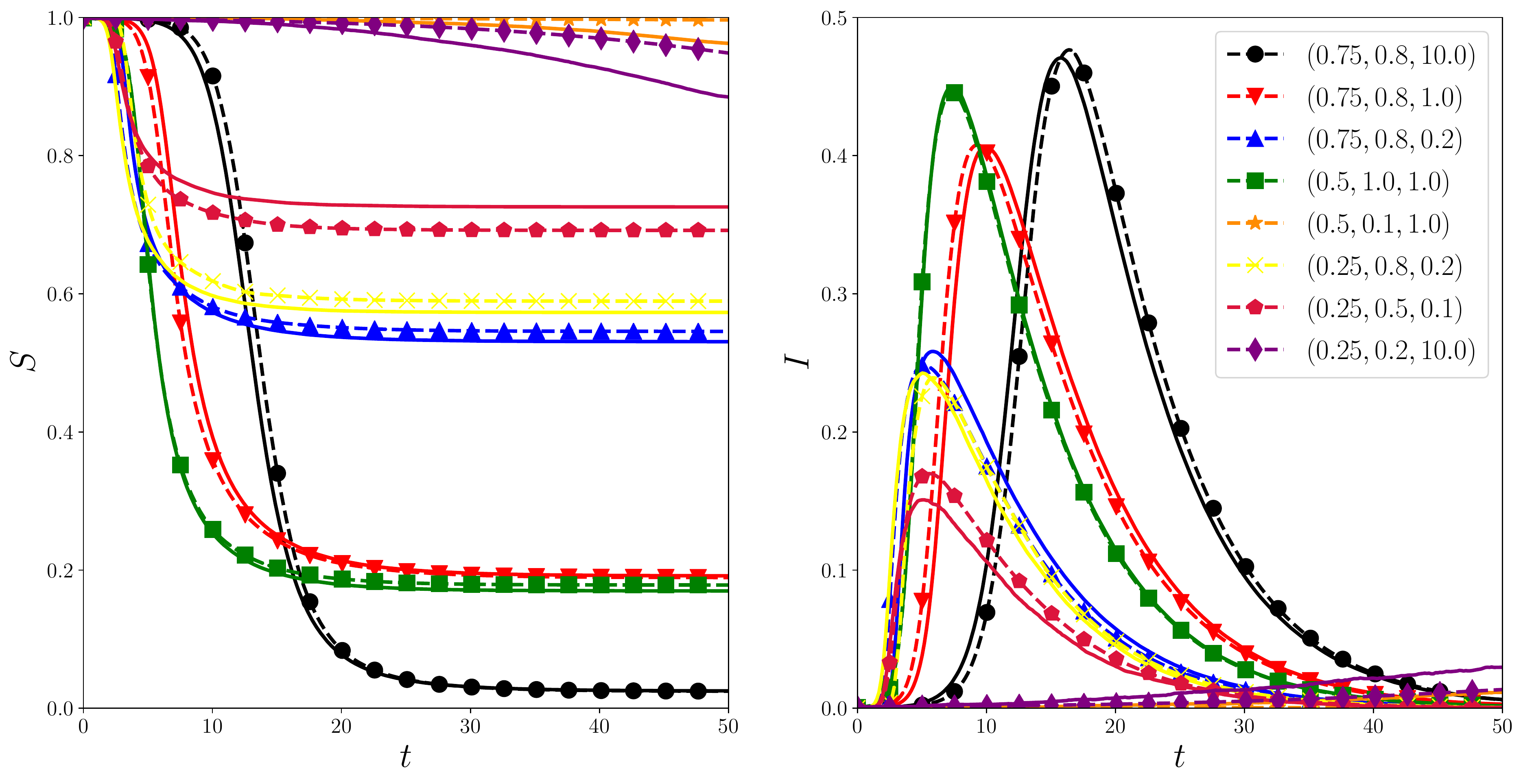}
    \caption[flushleft]{Response of the learned model \eqref{eq:learned_model} to parameters that are \emph{constant over time}. In the legend, these are specified in the format $(n,\beta,\kappa)$. Learning is based on the data set $\mathcal{D}$. In dotted lines with markers: predictions of the \eqref{eq:learned_model} learned model. In continuous line: average of the IBM (individual trajectories are not shown for clarity). \label{fig:learned_mdl_validation_cstant_parameters}}
\end{figure}

\subsection{Byproduct of our approach: estimating key epidemiological quantities}
\label{sec:key-epidemio_quantities}

So far, we trained the population-based model \eqref{eq:learned_model} which is expected to faithfully capture not only the global dynamics arising from individual variation, but also the impact of super-spreaders on an epidemic. In this subsection and based on the latter reduced model, we are working towards defining a \emph{threshold number} via the so-called next-generation matrix theory and estimating an \emph{epidemic size}. The goal is not so much about getting precise quantitative results regarding those epidemiological indicators, but rather to make qualitative statements and gain insight into how the parameters at stake, namely $n, \beta$ and $\kappa$, interact and influence them. Indeed some quantities like the \emph{epidemic threshold} are very useful for the epidemiologists but, contrary to the classical homogeneous models, not easy to calculate when super-spreaders are taken into account \cite{pmid:16292310}.

\paragraph{Estimating a threshold number.} In the case of population-based models and under suitable hypothesis, the \emph{next-generation matrix} theory introduced by Diekmann \textit{et al.} \cite{diekmannDefinitionComputationBasic1990} offers a \emph{systematic} framework to define a threshold number whose properties are identical to the well known $\mathcal{R}_0$ in the traditional SIR model. By analogy, this threshold number will hereafter be called $\mathcal{R}_0$. Informally, it provides information about the stability of the disease-free equilibrium (DFE) $(S,I)=(1,0)$ in the population-based model \eqref{eq:learned_model}. That is, if $\mathcal{R}_0<1$, then the DFE is locally asymptotically stable and the epidemic dies out; if not, then it is unstable and an outbreak occurs \cite{martchevaIntroductionMathematicalEpidemiology2015,vandendriesscheFurtherNotesBasic2008}.

Having numerically checked that the learned incidence function $F_\theta$ satisfies all the hypotheses of the next-generation matrix theory  (mainly dealing with positivity) \cite{diekmannDefinitionComputationBasic1990}, the calculation is straightforward and unambiguous. The threshold number $\mathcal{R}_0$ can be seen as a scalar function of the three parameters $(n,\beta,\kappa)$ given by:
\begin{equation}
  \mathcal{R}_0(n,\beta,\kappa) =  \left. \frac{\partial_{I} F_\theta }{\gamma}\right|_{(S=1,I=0;n,\beta,\kappa)}.
    \label{eq:R0_next_gen_matrix}
\end{equation}

We refer to  Appendix \ref{apdx:rigorous_R0_derivation} for a detailed derivation of this formula. Equation \eqref{eq:R0_next_gen_matrix} suggests that, in the early stage of an epidemic, an outbreak is all the more likely to occur as the rate of secondary infections is sensitive to increases in infected individuals.  Since the parametric function $F_{\theta}$ is a neural network, its partial derivative can be easily computed using automatic differentiation implemented in libraries such as Keras \cite{franccoischolletandothersKeras2015}.

To visualize the dependence of the threshold number on its parameters, we plot in Figure \ref{fig:seuil_next_gen_surf_col}, for many values of population size ratios and dispersion coefficient $(n,\kappa)$, the ratio $\beta_{c}(n,\kappa)/\gamma$ where $\beta_{c}(n,\kappa)$ denotes the critical transmission rate value above which the DFE becomes unstable.
More precisely, $\beta_c=\beta_c(n,\kappa)$ is the smallest value of the transmission rate such that the following inequality holds: $\mathcal{R}_0 \left(n,\beta_c (n,\kappa),\kappa\right) \geqslant 1.$

\begin{figure}
    \centering
    \includegraphics[scale=.5]{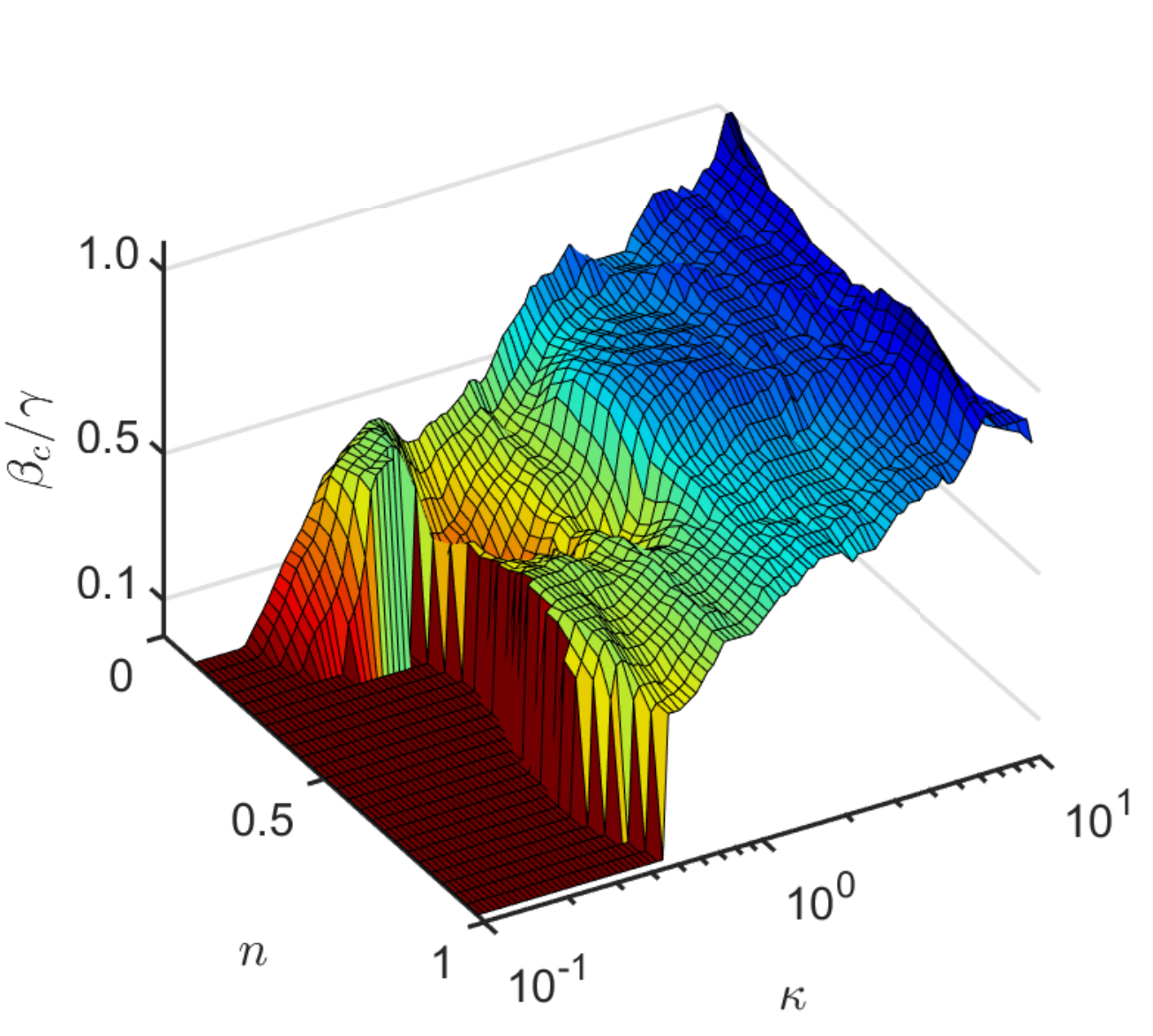} \quad    \includegraphics[scale=.5]{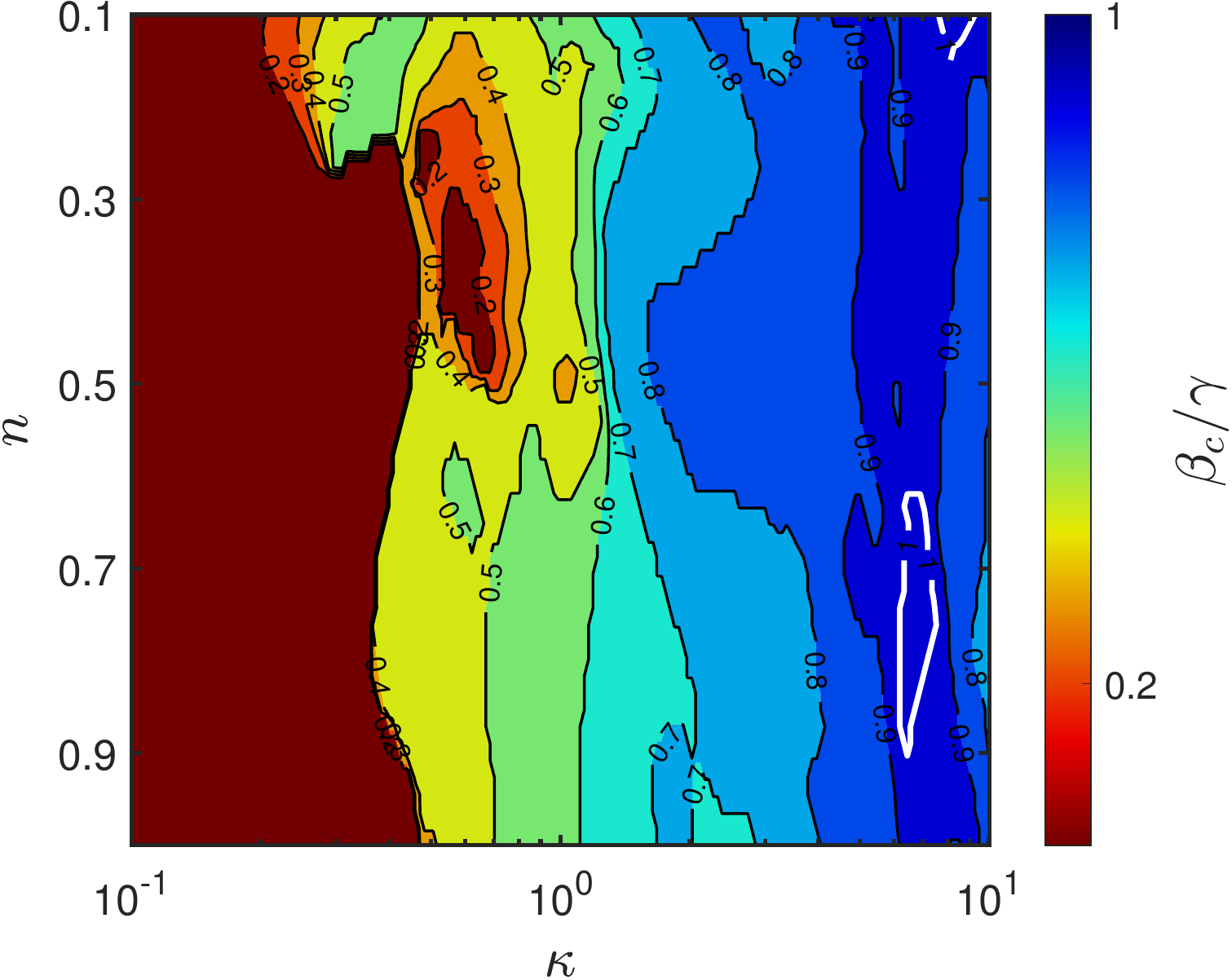}
    \caption[flushleft]{Left: We represent the critical value $\beta_c$ beyond which the DFE is unstable. It is normalized by $\gamma$ to obtain a dimensionless number (similar to $\mathcal{R}_0$ in the classical SIR model). The grid used contains 50 points for $n$, 200 for $\kappa$ and 200 for $\beta$. 
    Right: Same plot in 2D. The white contour corresponds to the case where the threshold is exactly equal to one.}
    \label{fig:seuil_next_gen_surf_col}
\end{figure}

In Figure \ref{fig:seuil_next_gen_surf_col}, we observe that the critical transmission rate $\beta_{c}$ is an increasing function of the dispersion coefficient $\kappa$. Consequently, with small dispersion coefficients (and thus super-spreaders), low transmission rates may be more likely to lead to the development of epidemics. In other words, low-dispersion diseases have a high risk of developing into epidemics. This is the kind of tendency we would expect, as suggested in the work \cite{lloyd-smithSuperspreadingEffectIndividual2005}.
For dispersion coefficients lower than $0.4$, the critical transmission rate is smaller than $10^{-3}$ and the graph looks flat. The estimation of the threshold number $\mathcal{R}_0$ is probably not very accurate or relevant in this region. Indeed, for these parameters, the individual-based dynamics are very stochastic and the population-based model may have some difficulty to learn the relevant threshold. However, the validation results (see previous subsection) show that the model captures epidemic outbreak well even for low $\kappa$. The difficulty is thus likely to be localized around the critical transmission rate. To validate this explanation, we plot in Figure \ref{fig:NGM_vs_empirical} the critical transmission surface projected in 2D and compare it with the individual-based simulations. Green dots refers to IBM simulations without outbreak, while red ones refer to simulations with outbreak. If the model were perfect, the green dots would all be under the surface and the red ones above the surface. We observe this behaviour for $\kappa>0.4$ and see that the trend described by the surface corresponds to the one found empirically.

\begin{figure}
    \centering
    \includegraphics[scale=.6]{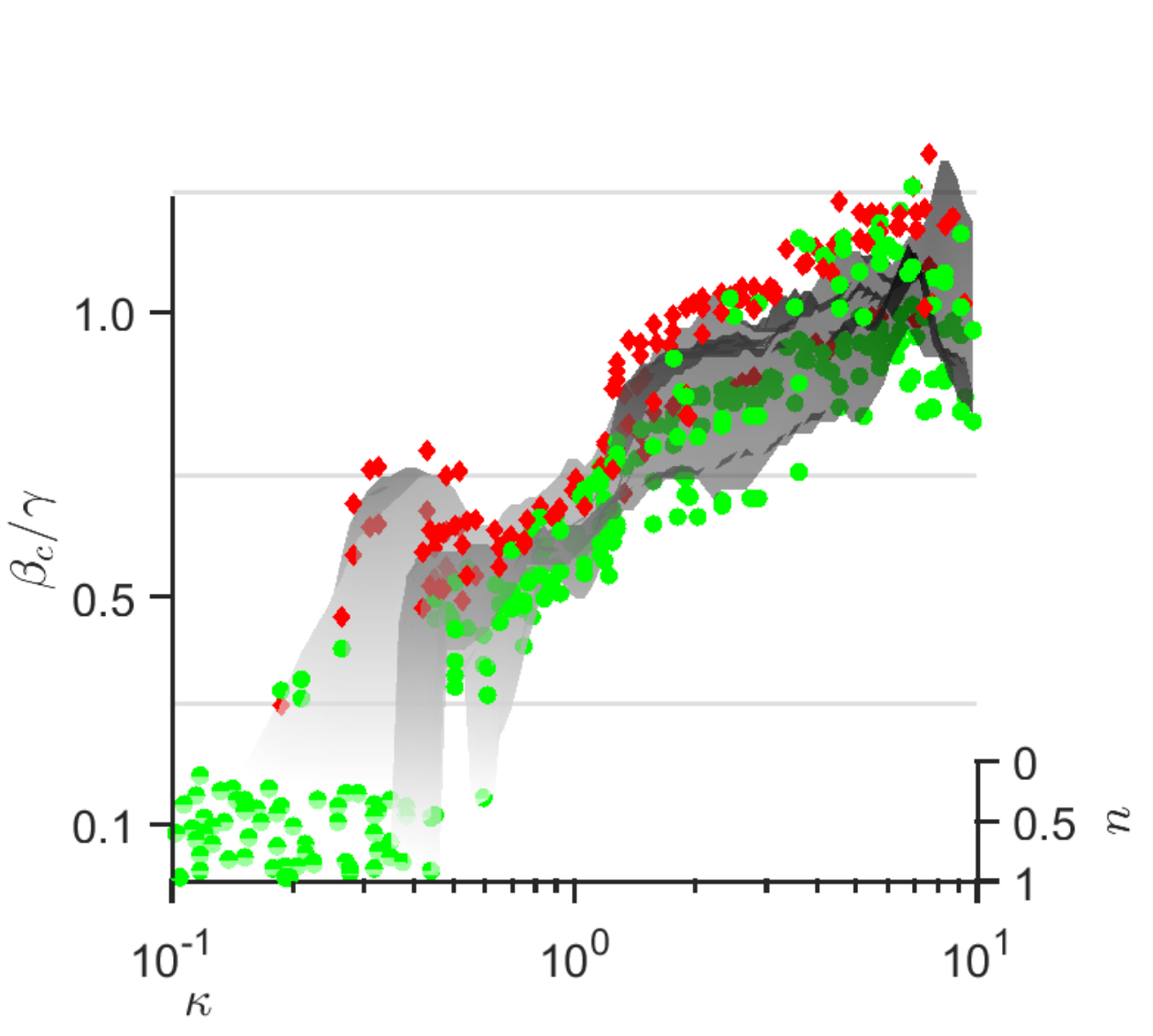}
    \caption[flushleft]{Green dots: stable IBM simulations. Red dots: IBM simulations with outbreak epidemic. In grey the projection of the critical transmission rate surface of Figure \ref{fig:seuil_next_gen_surf_col}.  }
    \label{fig:NGM_vs_empirical}
\end{figure}

\paragraph{Estimating the epidemic size.} The epidemic size, hereafter denoted $\mathcal{R}_\infty$, is defined as the total number of people who caught the disease. The epidemic size thus corresponds to the number of recovered individuals in large time:
\begin{equation}
    \mathcal{R}_\infty = \lim_{t \to +\infty} R(t).
    \label{eq:R_infty_definition}
\end{equation}
This quantity can be seen as a function of the parameters $(n,\beta, \kappa)$ as the dynamics of $R$ is depending on them. In practice, the epidemic size is found by running the population-based model \eqref{eq:learned_model} on a sufficiently long time interval, namely $T$ equal to 200, which is large enough so that the system dynamics approach a stationary state. Figure \ref{fig:rinfty_map_col} shows contour plots of $ \mathcal{R}_\infty$ based on the outcome of numerical simulations with, for each  population size ratio $n = 0.1, 0.2$ and $0.8$, many parameters $(\beta, \kappa)$. 
This suggests that:
\begin{enumerate}
\item $\mathcal{R}_\infty$ is less dependent on the infection rate $\beta$ whenever $\kappa \leq 1$,
\item dependency of $\mathcal R_\infty$ on $n$ is more sensitive in the large dispersion coefficient case ($\kappa >1$) and for large values of the transmission rate $\beta$,
\item $\mathcal{R}_\infty$ seems to decrease with $\kappa$; a possible interpretation of this observation is that if epidemics are more intense but shorter, the total number of infected people may be less than if the epidemic is less intense but extends over longer periods of time.
\end{enumerate}

\begin{figure}
    \centering
    \includegraphics[width=\linewidth]{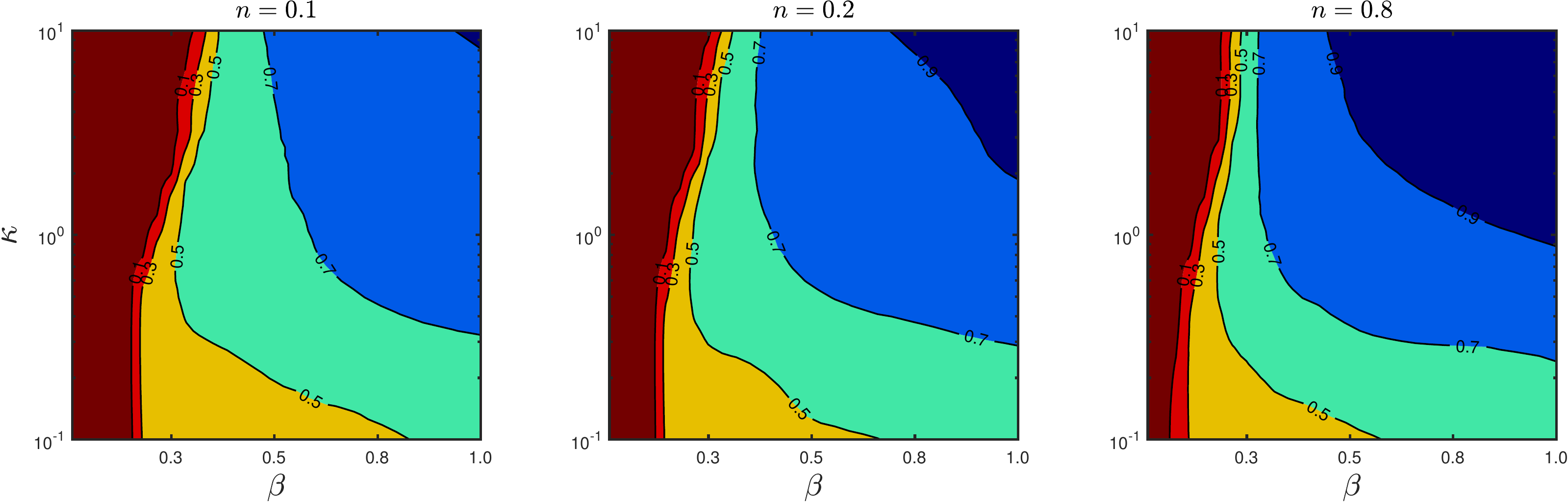}
    \caption[flushleft]{$\mathcal{R}_\infty$ as a function of $(\beta,\kappa)$ for different population size ratios $n$. The grid used contains 50 points for $\kappa$, 30 points for $\beta$.
    \label{fig:rinfty_map_col}}
\end{figure}

\subsection{Model limitations}
\label{sec:model_limitations}

The results of the previous section show that the data-driven population-based model \eqref{eq:learned_model} is able to capture complex dynamics with super-spreading in a wide range of population size ratios. Moreover, the neural network structure of the incidence function makes the analysis of the reduced system theoretically and numerically easier. 

However, all previous simulations were concerned with parameters $\beta$, $\kappa$ that were \emph{constant over time}. When considering time varying parameters, as would be required to control the dynamics, the model no longer works well as observed on Figure~\ref{fig:learned_mdl_validation_PWC_parameters_ex1}.

A possible explanation is that learning the incidence function for parameters close to the epidemic threshold $\beta_c$ is delicate. It is likely that constant parameters choices generate smaller sets for $(S,I,R)$ as the ones required to capture the epidemic threshold $\beta_c$ in more general cases, and that the transmission rate function $f_\theta$ is non regular. Indeed, when considering non-constant parameters, many additional configurations $(S,I,R)$ generating bifurcations may arise. Roughly speaking, the incidence function is probably more intricate to approximate. 

The next section is dedicated to improving the data-driven population-based model in order to manage such time-varying parameters. It is an essential ingredient for the control strategy of the individual-based model.

\begin{figure}
\centering
\begin{subfigure}{.33\linewidth}
  \centering
  \includegraphics[width=\linewidth]{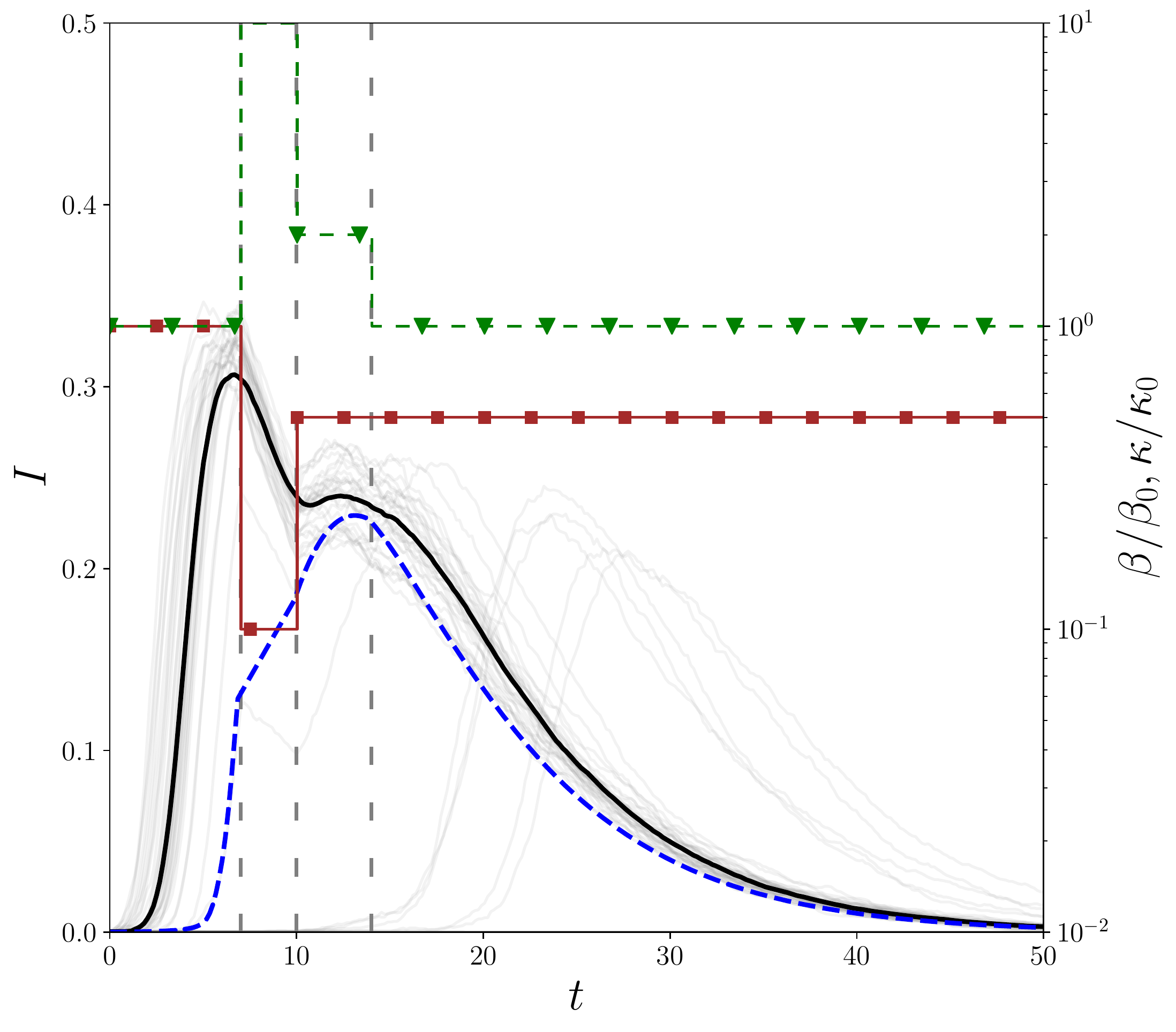}
  \caption{$n = .15$, $\beta=(.8,.08,.4,.4)$,\\ $\kappa=(.5,5,1,.5)$}
  \label{fig:pwcSubFig1}
\end{subfigure}%
\begin{subfigure}{.33\linewidth}
  \centering
  \includegraphics[width=\linewidth]{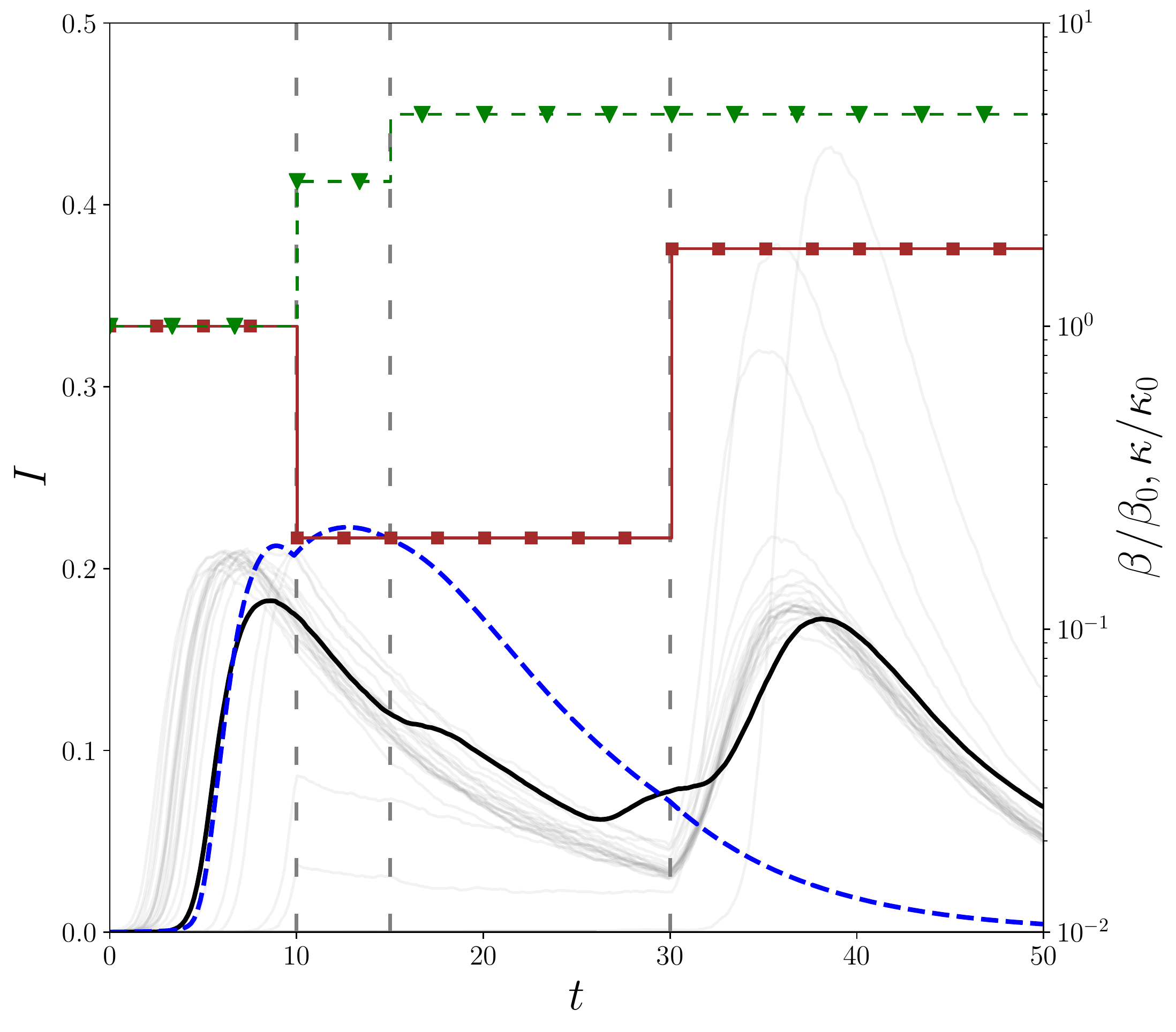}
  \caption{$n = .5$,$\beta=(.5,.1,.1,.9)$,\\ $\kappa=(.2,.6,1,1)$}
  \label{fig:pwcSubFig3}
\end{subfigure}
\begin{subfigure}{.33\linewidth}
  \centering
  \includegraphics[width=\linewidth]{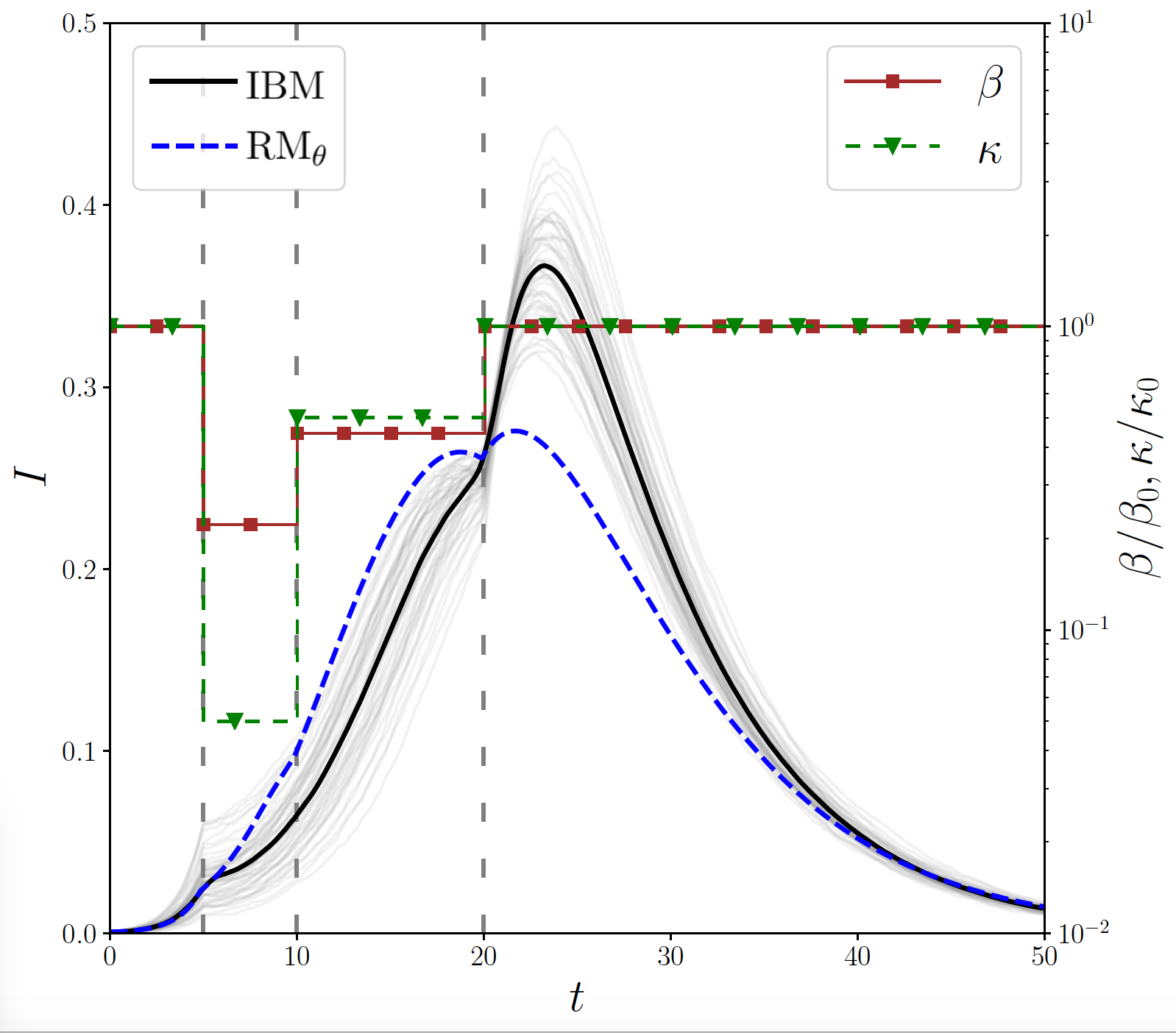}
  \caption{$n = .75$, $\beta=(.9,.2,.4,.9)$,\\ $\kappa=(10,.5,5,10)$}
  \label{fig:pwcSubFig2}
\end{subfigure}
\caption{Comparison of the trajectory $I$ between the \eqref{eq:learned_model} model (dashed line blue) and IBM (in black on the figure) in three cases where the parameters $\beta$ (red squares markers) and $\kappa$ (green triangle markers) \emph{vary over time}.
\label{fig:learned_mdl_validation_PWC_parameters_ex1}}
\end{figure}

\section{Optimal control strategies based on the data-driven population based SIR model}

In this section, we seek to define a control problem for the individual-based model with heterogeneous contacts. We will consider a problem in which we aim at minimizing the maximum number of infected individuals by including an important constraint reflecting the limited capacity of hospitals. The control parameters, allowing to act on the population-based SIR system \eqref{eq:learned_model}, are the coefficients $\beta$ and $\kappa$. The action expressed by $\beta$ can be seen as a policy concerning the whole population (such as lockdowns, indoor masks) contrary to the action expressed by $\kappa$ which allows acting on super-spreaders (cancelling of large events for instance). Since the control of the (IBM) is complex, we choose to use the reduced model to determine an optimal policy. However, as mentioned before, the model is not efficient anymore whenever $\beta$ and $\kappa$ are non-constant functions of the time.
As a consequence, if we use it, we will have a reduced model and {\it a fortiori} a control that will not be relevant in some areas. To avoid this, we will use the principle of \textbf{model predictive control with local model} (MPC), where we learn a local model around a trajectory, compute the control and learn the controlled trajectory. By iterating this method, we expect to obtain a control relevant to the original individual-based model. 

Before introducing this strategy, we will first define an optimal control on the data-driven population based SIR model constructed in Section \ref{sec:learning_global_model}. As expected, without additional care being taken, this control deteriorates the fit between the reduced population based-model and the individual-based model. Subsequently, an MPC-based reinforcement learning strategy is introduced to compute an accurate control of the individual-based model.

\subsection{Optimal control of the data-driven reduced population based model}
\label{sec:OC_learned_system}
This section is dedicated to applying the standard theory of optimal control (OC) to the learned model \eqref{eq:learned_model} involving the incidence function $F_\theta$ whose expression has been obtained thanks to a neural network. Recall that the weights of the latter neural network have been optimized so that the output of $F_\theta$ accurately estimates the rate of change of the proportion of susceptible individuals (see Section \ref{sec:learning_global_model}). Therefore, it is not clear that the partial derivatives of $F_\theta=F_\theta(S,I;n,\beta,\kappa)$ are also good approximations of the corresponding quantities. Thus, when solving the equations numerically, difficulties may arise due to the fact that the tools of OC theory rely heavily on the \emph{differentiation} of the system dynamics with respect to the state variables and control.

Since the learned model captures in particular the dynamics related to contact heterogeneity, the health policies we consider are not only modeled by variations in the transmission rate $\beta$ but also in the dispersion coefficient $\kappa$. A decrease in the former is considered to be the mathematical translation of mandatory measures such as lockdown or indoor masks. In contrast, an increase in the latter should be seen as a consequence of decisions to either close down places where super-spreaders are likely to be found, such as nightclubs or concert halls, or to require people to hold a valid COVID certificate. 
In what follows, we will use the coefficients $\beta$ and $\kappa$ as optimization variables to contain the epidemic.
The idea behind such a choice is that decreasing the coefficient of dispersion flattens the tail of the negative binomial probability density.

\paragraph{Introduction of the reduced controlled system.} Suppose that at the initial time, only a small proportion of a population of size $1=S(t)+I(t)+R(t)$ has contracted a disease whose transmission rate is estimated to be $\beta_0$ and that the coefficient of dispersion is approximately known to be $\kappa_0$. Furthermore, let $T>0$ be the time horizon up to which we wish to study the effect of given health policies and $T_c < T$ be the non-negative time required for a sufficient number of secondary infections to occur and the health authorities to intervene. At this time, the state of the population is denoted by the non-negative numbers $S_c$ and $I_c$.

The equations describing the dynamics of the system under the health policies $b(\cdot):=\beta(\cdot)/\beta_0$ and $k(\cdot):=\kappa(\cdot)/\kappa_0$ are given over the time interval $[T_c, T]$ as
\begin{align}
	\begin{pmatrix} S'\\ I' \end{pmatrix} = g\left(S,I,bv (k), k \right ), \qquad \qquad \qquad 
    \begin{pmatrix}S\\I\end{pmatrix} (T_c) = \begin{pmatrix}S_{c}\\I_{c}\end{pmatrix},
    \label{eq:controlled_system}
\end{align}
where we have defined the right-hand side by
$$ 
g : \RR ^4\ni (a,b,c,d) \mapsto \left ( -F_\theta(a,b;n,c\beta_0,d\kappa_0),
                                     F_\theta(a,b;n,c\beta_0,d\kappa_0) - \gamma b \right )\in \RR ^2. 
$$
Since the total population size is constant in time, it is enough to consider only the $(S,I)$ equation in the control problem. In addition, the health policy $(b,k)$ is assumed to belong to the set of admissible control
\begin{equation}
    \mathcal{U} = \left\{(b,k) \in L^\infty(T_c,T;\RR ^2) : b_{\mathrm{min}}\leq b(\cdot) \leq 1, \: 1\leq k(\cdot) \leq k_{\mathrm{max}}\; \mathrm{a.e}.\right\},
\end{equation}
where $b_{\mathrm{min}} \in (0,1]$ and $ k_{\mathrm{max}} > 1$ are given and reflect the fact that a perfect application of sanitary measures is unrealistic. It is recalled in Appendix~\ref{apdx:well_posed_controlled_system} that in this setting, the ODE system~\eqref{eq:controlled_system} is well-posed. Let us also comment on the choice of transmission rate $bv(k)$ appearing in model~\eqref{eq:controlled_system}. The mapping $v$ is defined as a non-increasing function of $k$ by  
$$
v : [1, k_{\max}]\ni k \mapsto 1/\left(1+\log_{10}(k)\right)\in \RR .
$$ 
It is a relatively simple way to account for the fact that controlling super-spreaders (e.g. closing down certain public places or introducing mandatory COVID certificate) invariably has an influence on the whole population. Mathematically, we take this into account by expressing that the action on $\kappa$ (the action on the super-spreaders) also has a small influence on $\beta$ (on the whole population). Introducing an effective rate $bv(k)$ enables us to couple both the controls. 

\paragraph{Towards an optimal control problem for the reduced model.} There are different aspects that we want to include in the definition of the optimal control problem:
\begin{itemize}
\item on the one hand, the application of sanitary measures can be detrimental in the long run, both to the mental health of the citizens but also to the economy. We therefore choose to integrate weights in the definition of the criterion, which is similar to a choice of the political decision-maker. This makes it possible to penalize or not certain types of measures (confinement or closure of certain public places) in the cost of control. 
\item on the other hand, we wish that the epidemic dies out as soon as possible without putting too much pressure on the health infrastructures (hospitals, intensive care units), i.e. having proportions of infected individuals above a given threshold $I_{\mathrm{hosp}}$. 
\end{itemize}
These considerations lead us to balance the costs using a convex combination of three non-negative weights $\omega_\beta$, $\omega_\kappa$ and $\omega_{\mathrm{hosp}}$. The last term of the cost function aims to penalize strongly, say by $1/\varepsilon$ for a small positive $\varepsilon$, any control leading to proportions of infected individuals above a certain threshold $I_{\mathrm{max}} \in ( I_{\mathrm{hosp}}, 1]$. This constraint can be understood as a strong constraint such as the one on intensive care beds for example. 

Thus, by denoting by $I_{b,k}$ the second component of the $(S,I)$ solution to \eqref{eq:controlled_system} associated with a $(b,k)$ health policy, all the previous elements are taken into account in the fixed-time optimal control problem
\begin{equation}\tag{OCP}
\boxed{ \inf_{(b,k)\in \: \mathcal{U}}J[b,k]},
    \label{eq:OCP}
\end{equation}
relying on the cost functional $J$ defined on the set of admissible controls $\mathcal{U}$ by
\begin{equation*}
    J[b, k] = \frac12 \int_{T_c}^T \omega_\beta \left(1-b(t)\right)^2 
                                      + \omega_\kappa \left(k(t)-1\right)^2 
                                      + \omega_{\mathrm{hosp}} \left(\frac{I_{b,k}(t)}{I_{\mathrm{hosp}}}-1\right)_+^2 
                                      + \frac{1}{\varepsilon} \left(\frac{I_{b,k}(t)}{I_{\mathrm{max}}}-1\right)_+^2 \mathrm{d}t.
    \label{eq:cost_fction}
\end{equation*}
Notice that, in the definition above, the purpose of the positive part function is to avoid penalizing efficient health policies that limit the quantity of sick individuals to proportions lower than $I_{\mathrm{hosp}}$. 

\paragraph{On the existence of an optimal control.}

In the problem~\eqref{eq:OCP}, the control intervenes in the dynamics in a strongly nonlinear way. It is known that, for such control problems, it is not guaranteed that a solution exists and phenomena such as relaxation or homogenization of the minimizing sequences, leading to numerical pathologies, may occur. 
For this reason, we will in fact slightly modify the previous optimal control problem by adding a regularization term to the cost function $J$. We have decided to consider here a BV regularization\footnote{Recall that if $\Omega$ denotes an open set of $\RR^n$, $f$ belongs to $\operatorname{BV}(\Omega)$ whenever $f$ belongs to $L^1(\Omega)$ and 
$$
\operatorname{TV}(f)<+\infty\quad \text{where}\quad \operatorname{TV}(f)=\sup_{\substack{\psi \in C^1_c(\Omega;\RR^n)\\ \Vert \psi\Vert_{L^\infty(\Omega)=1}}}\int_\Omega f\operatorname{div}\psi .
$$
The Banach space $\operatorname{BV}$ is endowed with the norm $\Vert \cdot\Vert_{\operatorname{BV}(\Omega)}$ defined by $\Vert f\Vert_{\operatorname{BV}(\Omega)}:=\Vert f\Vert_{L^1(\Omega)}+\operatorname{TV}(f)$.
} of \eqref{eq:OCP}, by introducing the following problem:
\begin{equation}\tag{OCP$_\delta$}
  \boxed{  \inf_{(b,k)\in \: \mathcal{U}}J_\delta[b,k]},
    \label{eq:OCPreg}
\end{equation}
where $\delta>0$ is a parameter standing for the strength of the regularization and
$$
J_\delta[b,k]=J[b,k]+\delta (\operatorname{TV}[b]+\operatorname{TV}[k]).
$$
Such regularization is interesting from several points of view. For example, if the control is of the bang-bang type\footnote {in other words, if the control takes only two distinct values}, the BV regularization imposes a maximum number of switches, which may reflect an economic cost. On the other hand, this term imposes the membership of the control to the BV Banach space, which leaves the freedom to choose the control functions among a large variety of functions, not necessarily continuous. From now on, we will have to make the following regularity assumptions on the function $F_\theta$:
\begin{equation}\tag{$\mathcal{H}_{F_\theta}$}\label{hypFtheta}
\begin{minipage}{15cm}
$F_\theta:\Omega\to \RR$ is $W^{1,\infty}$ and of the form \eqref{Ftheta_part},
\end{minipage}
\end{equation}
where $\Omega=[0,1]^3 \times [\beta_0 b_{\min},\beta_0]\times [\kappa_0,\kappa_0 k_{\max}]$.
Under this assumption, System~\eqref{eq:controlled_system} has a unique global solution that belongs to $W^{1,\infty}(T_c,T;\RR^3)$, according to Appendix~\ref{apdx:well_posed_controlled_system}.

We claim that Problem~\eqref{eq:OCPreg} has a solution $(b,k)$. Since the arguments are rather standard, we refer to Appendix~\ref{sec:analysisOCP} for additional explanations.  Let us now introduce the first-order optimality conditions for this problem, which are at the heart of the numerical solution algorithm that we then implement. The optimality conditions for this problem involve the notion of subdifferential $\partial\operatorname{TV}$ of the total variation operator. For the sake of readability, the proof of the following result is postponed to Appendix~\ref{apdx:well_posed_controlled_system}.

\begin{theorem}\label{prop:diffJ}
Let $M_\theta(S,I;n,\beta_0 bv(k),\kappa_0 k)$ denote the matrix 
\begin{equation}\label{def:mtheta}
M_\theta=\begin{pmatrix}
-\partial_S F_\theta (S,I;n,\beta_0 bv(k),\kappa_0 k) & -\partial_I F_\theta (S,I;n,\beta_0 bv(k),\kappa_0 k)\\
\partial_S F_\theta (S,I;n,\beta_0 bv(k),\kappa_0 k) & \partial_I F_\theta (S,I;n,\beta_0 bv(k),\kappa_0 k)-\gamma
\end{pmatrix}
\end{equation}
and let $[p_1,q_1,p_2,q_2]$ denote the solution of the (linear) adjoint system
\begin{equation}\label{eq:adjoint}
-\frac{d}{dt}\begin{pmatrix}
p_1\\ q_1\\p_2\\q_2
\end{pmatrix}=\begin{pmatrix}
M_\theta^\top & 0_{\mathcal{M}_2(\RR)}\\
0_{\mathcal{M}_2(\RR)} & M_\theta^\top 
\end{pmatrix}
\begin{pmatrix}
p_1\\ q_1\\p_2\\q_2
\end{pmatrix}+\left(\frac{\omega_{\mathrm{hosp}}}{{I_{\mathrm{hosp}}}}\left(\frac{I}{I_{\mathrm{hosp}}}-1\right)_++\frac{1}{ I_{\mathrm{max}} \varepsilon}\left(\frac{I}{I_{\mathrm{max}}}-1\right)_+\right)\begin{pmatrix}
0\\ 1\\ 0 \\ 1 
\end{pmatrix}
\end{equation}
completed with the terminal conditions
$$
p_1(T)=q_1(T)=p_2(T)=q_2(T)=0.
$$
The functionals $\mathcal U\ni [b,k]\mapsto (S,I)\in [W^{1,\infty}(T_c,T)]^2$ and $J$ are differentiable, where the pair $(S,I)$ denotes the solution of \eqref{eq:controlled_system} associated with the control choice $(b,k)$. Furthermore, the differential of $J$ is given by
\begin{eqnarray}
\langle dJ[b,k],[h_1,h_2]\rangle &=& \int_{T_c}^T \left(h_1\partial_b J(b,k)  + h_2\partial_k J(b,k) \right)\, {\rm d}t
\end{eqnarray}
for every $[b,k] \in \mathcal U$ and every admissible perturbation\footnote{More precisely, we call ``admissible perturbation'' any element of the tangent cone $\mathcal{T}_{[b,k],\mathcal U}$ at $[b,k]$ to the set $\mathcal U$. The cone $\mathcal{T}_{[b,k],\mathcal U}$ is the set of functions $[h_1,h_2] \in L^\infty (T_c,T;\RR^2)$ such that, for any sequence of positive real numbers $(\varepsilon_n)_{n\in \NN}$ decreasing to 0, there exists two sequences of functions $h_{i,n} \in L^\infty(T_c, T )$ converging to $h_i$, $i=1,2$, as $n \to +\infty$, and $[b,k] + \varepsilon_n [h_{1,n},h_{2,n}] \in \mathcal U$ for every $n \in \NN$.} $[h_1,h_2]$, where 
\begin{eqnarray*}
\partial_b J(b,k) & = &  \omega_\beta   \left(b-1\right)+\begin{pmatrix}
p_1\\ q_1
\end{pmatrix}\cdot \begin{pmatrix}
- \beta_0 v(k) \partial_{\beta} F_\theta \\
 \beta_0 v(k) \partial_{\beta} F_\theta 
\end{pmatrix}\\
\partial_k J(b,k) & = & \omega_\kappa  \left(k-1\right)+\begin{pmatrix}
p_2\\ q_2
\end{pmatrix}\cdot \begin{pmatrix}
- \beta_0bv'(k) \partial_{\beta} F_\theta -\kappa_0 \partial_{\kappa} F_\theta \\
 \beta_0bv'(k) \partial_{\beta} F_\theta +\kappa_0 \partial_{\kappa} F_\theta
\end{pmatrix}.
\end{eqnarray*} 

Now, 
let us assume that \eqref{hypFtheta} is true and let $(b,k)$ denote a solution to Problem~\eqref{eq:OCPreg}. There exist $T_b\in \partial \operatorname{TV}(b)$ and $T_k\in \partial \operatorname{TV}(k)$ such that
$$
\forall (B,K)\in L^\infty(T_c,T;[ b_{\mathrm{min}},1])\times L^\infty(T_c,T;[ 1,k_{\mathrm{max}}]), \quad 
\left\{\begin{array}{l}
\langle \partial_b J-T_b,B-b\rangle_{L^2(T_c,T)} \geq 0\\
\langle \partial_k J-T_k,K-k\rangle_{L^2(T_c,T)} \geq 0.
\end{array}
\right.
$$
\end{theorem}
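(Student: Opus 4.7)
The plan is to proceed in three stages: (i) establish Fréchet-differentiability of the control-to-state map $[b,k]\mapsto(S,I)$; (ii) compute $dJ$ through an adjoint/Lagrangian argument; (iii) combine the differentiable part of $J_\delta$ with the subdifferential of $\delta\operatorname{TV}$ to obtain the variational inequality. Under \eqref{hypFtheta}, the right-hand side $g$ in \eqref{eq:controlled_system} is globally Lipschitz on $\Omega$, so by Appendix~\ref{apdx:well_posed_controlled_system} the map $[b,k]\mapsto(S,I)$ is well-defined and continuous from $\mathcal U$ to $W^{1,\infty}(T_c,T;\RR^2)$. Fix $[b,k]\in\mathcal U$ and an admissible perturbation $[h_1,h_2]$, and denote by $(S_\varepsilon,I_\varepsilon)$ the state associated with $[b+\varepsilon h_1,k+\varepsilon h_2]$. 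Let $(\dot S,\dot I)$ be the $W^{1,\infty}$-solution, starting from zero at $T_c$, of the formally linearized system
$$
\frac{d}{dt}\begin{pmatrix}\dot S\\ \dot I\end{pmatrix}=M_\theta\begin{pmatrix}\dot S\\ \dot I\end{pmatrix}+h_1\beta_0 v(k)\partial_\beta F_\theta\begin{pmatrix}-1\\1\end{pmatrix}+h_2\bigl(\beta_0 bv'(k)\partial_\beta F_\theta+\kappa_0\partial_\kappa F_\theta\bigr)\begin{pmatrix}-1\\1\end{pmatrix}.
$$
A first Gronwall estimate, using the Lipschitz character of $g$, yields $\|(S_\varepsilon,I_\varepsilon)-(S,I)\|_{W^{1,\infty}}=O(\varepsilon)$; a second Gronwall argument applied to the rescaled remainder $\varepsilon^{-1}((S_\varepsilon,I_\varepsilon)-(S,I))-(\dot S,\dot I)$ gives $o(1)$ in $W^{1,\infty}$, which is exactly the Fréchet-differentiability of the state map.

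Second, the integrand of $J$ splits into terms depending on $(b,k)$ (which are $C^2$) and terms depending on $I$ through $x\mapsto(x)_+^2$, which is $C^1$ with derivative $2(x)_+$. The chain rule combined with the state-map differentiability yields
$$
\langle dJ[b,k],[h_1,h_2]\rangle=\int_{T_c}^T\bigl(\omega_\beta(b-1)h_1+\omega_\kappa(k-1)h_2+\Phi(I)\,\dot I\bigr)\,dt,
$$
where $\Phi(I)=\tfrac{\omega_{\mathrm{hosp}}}{I_{\mathrm{hosp}}}(I/I_{\mathrm{hosp}}-1)_++\tfrac{1}{I_{\mathrm{max}}\varepsilon}(I/I_{\mathrm{max}}-1)_+$. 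To eliminate $(\dot S,\dot I)$, I introduce the backward-in-time adjoint $(p,q)\in W^{1,\infty}(T_c,T;\RR^2)$ solving $-\tfrac{d}{dt}(p,q)^\top=M_\theta^\top(p,q)^\top+\Phi(I)(0,1)^\top$ with $p(T)=q(T)=0$. Multiplying the linearized system by $(p,q)$ and integrating by parts on $[T_c,T]$ (using the zero initial and terminal conditions) converts $\int\Phi(I)\dot I\,dt$ into an expression linear in $(h_1,h_2)$; collecting the coefficients of $h_1$ and $h_2$ delivers exactly the formulas announced for $\partial_b J$ and $\partial_k J$. The duplication $(p_1,q_1,p_2,q_2)$ in \eqref{eq:adjoint} is purely a labelling convention — the two pairs satisfy the same equation and coincide — chosen to display the two control components of the gradient in parallel.

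Third, $J_\delta=J+\delta(\operatorname{TV}[b]+\operatorname{TV}[k])$ is the sum of a Fréchet-differentiable part and of a convex lower semicontinuous proper functional on the convex admissible set $\mathcal U$. The standard first-order condition at a minimizer $[b,k]$ reads
$$
\langle dJ[b,k],(B,K)-(b,k)\rangle+\delta(\operatorname{TV}[B]-\operatorname{TV}[b])+\delta(\operatorname{TV}[K]-\operatorname{TV}[k])\geq 0
$$
for every $(B,K)\in\mathcal U$. By definition of $\partial\operatorname{TV}$ and because the two TV terms decouple across $b$ and $k$, this inequality is equivalent to the existence of $T_b\in\partial\operatorname{TV}(b)$ and $T_k\in\partial\operatorname{TV}(k)$ realizing the announced pair of variational inequalities. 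The main obstacle I anticipate is the careful treatment of the $C^1$-but-not-$C^2$ thresholds $I=I_{\mathrm{hosp}}$ and $I=I_{\mathrm{max}}$ when applying the chain rule — this is what ultimately forces the adjoint source term to be $\Phi(I)$ — while the integration-by-parts step only requires the absolute continuity of $W^{1,\infty}$ functions and presents no genuine difficulty.
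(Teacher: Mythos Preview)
Your proof is correct and follows essentially the same adjoint/integration-by-parts route as the paper: linearize the state equation, pair it with the backward adjoint \eqref{eq:adjoint}, integrate by parts, and then invoke the subdifferential sum rule $0\in\partial(J+\delta\operatorname{TV}+\iota_{\mathcal U})$ for the optimality conditions. The only cosmetic differences are that the paper cites Lee--Markus for state differentiability rather than writing out the Gronwall argument, and that it keeps the $b$- and $k$-sensitivities in two separate linearized systems, each paired with its own adjoint pair $(p_1,q_1)$ and $(p_2,q_2)$; as you correctly observe, these two adjoint pairs satisfy identical backward problems and therefore coincide.
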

\begin{remark}[subdifferential of the total variation]
Let us recall that, according to \cite[Proposition~I.5.1]{MR1727362},  the subdifferential of the total variation is given by
$$
\partial \operatorname{TV}(b)=\{\eta \in C^0([T_c,T])\mid \Vert \eta \Vert_\infty\leq 1\text{ and }\int \eta {\rm d}b=\operatorname{TV}(b)\}.
$$
\end{remark}
From a practical point of view, we will not directly use these optimality conditions which remain rather abstract written as they are. Instead, we will regularize the TV term and introduce a descent method using the differential calculation established in Theorem~\ref{prop:diffJ}, using the adjoint state $(p_1,q_1,p_2,q_2)$. The implemented algorithm is introduced in Appendix~\ref{apdx:numerical_implementation}. 

As explained earlier, we will consider an optimal control computed from the reduced non-linear model that we will apply to the individual-based model. To numerically compute an estimate of the control solving the \eqref{eq:OCPreg} problem, we use a direct approach consisting in discretizing the differential systems involved via a regular $\mathcal{S}$ subdivision of the $[T_c, T]$ interval with step-size $\Delta t$. This also allows us to transform the optimal control problem into a nonlinear program whose decision variables are the control values $(b,k)$ evaluated at each point of $\mathcal{S}$. The optimization of the latter values is performed using a relatively simple adaptive step projected gradient algorithm, using a linear search of the step size taken in the direction of greatest descent\footnote{In other words, such that the next control leads, after projection onto the set of constraints, to the greatest possible decrease in the value of the cost functional}. In order to limit the computational cost, the latter online search is performed using a gradient-free method called \emph{golden-section search} \cite{pressNumericalRecipesArt1992}. Details are provided in Appendix~\ref{apdx:numerical_implementation}.

\begin{figure}[ht!]
\centering
\begin{subfigure}{.3\linewidth}
  \centering
  \includegraphics[width=\linewidth]{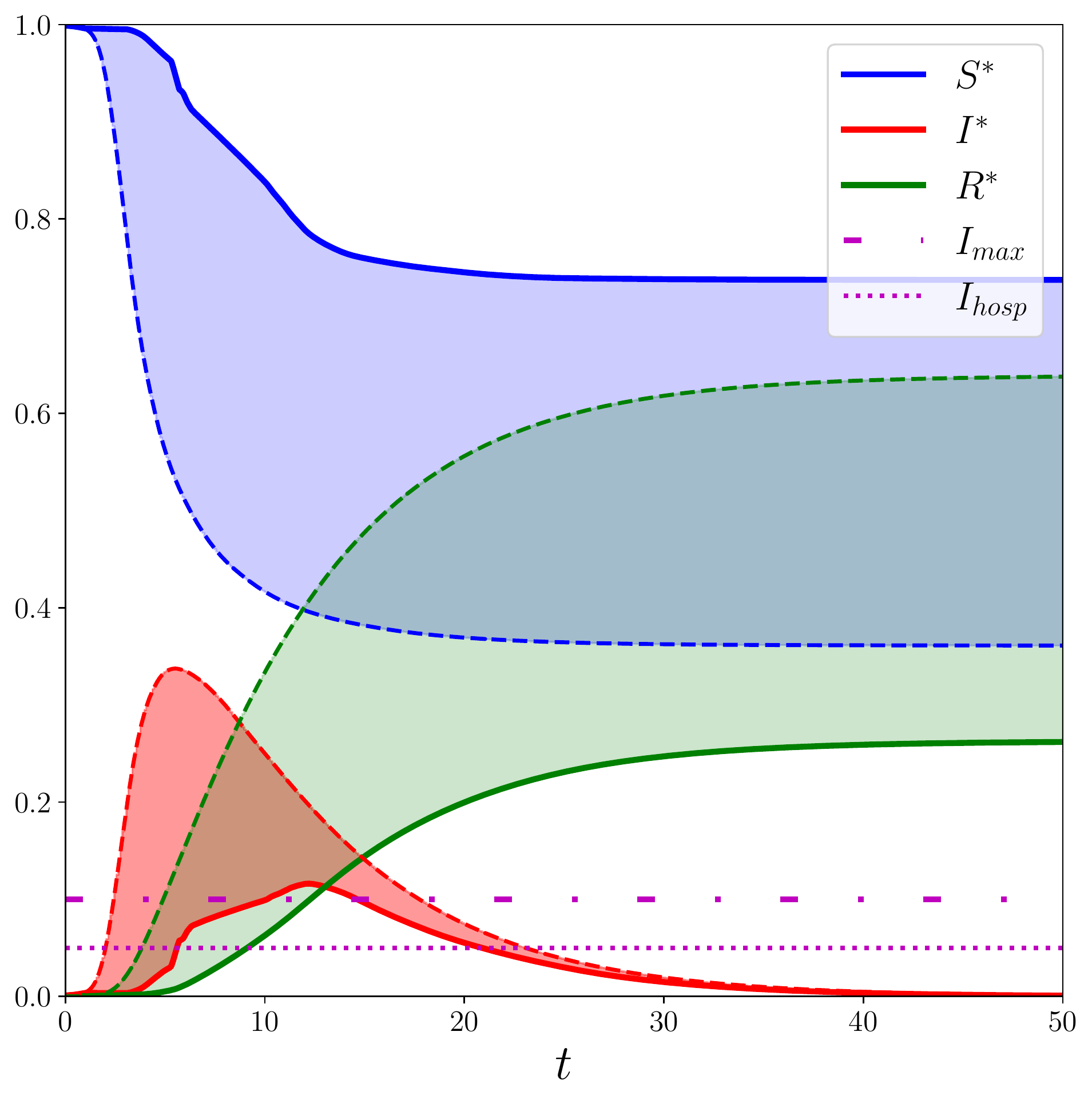}
  \caption{Trajectories with (solid line) / without (dashed line) control.\\}
  \label{fig:learnedMdlCtrlSubFig1}
\end{subfigure}%
\begin{subfigure}{.34\linewidth}
  \centering
  \includegraphics[width=\linewidth]{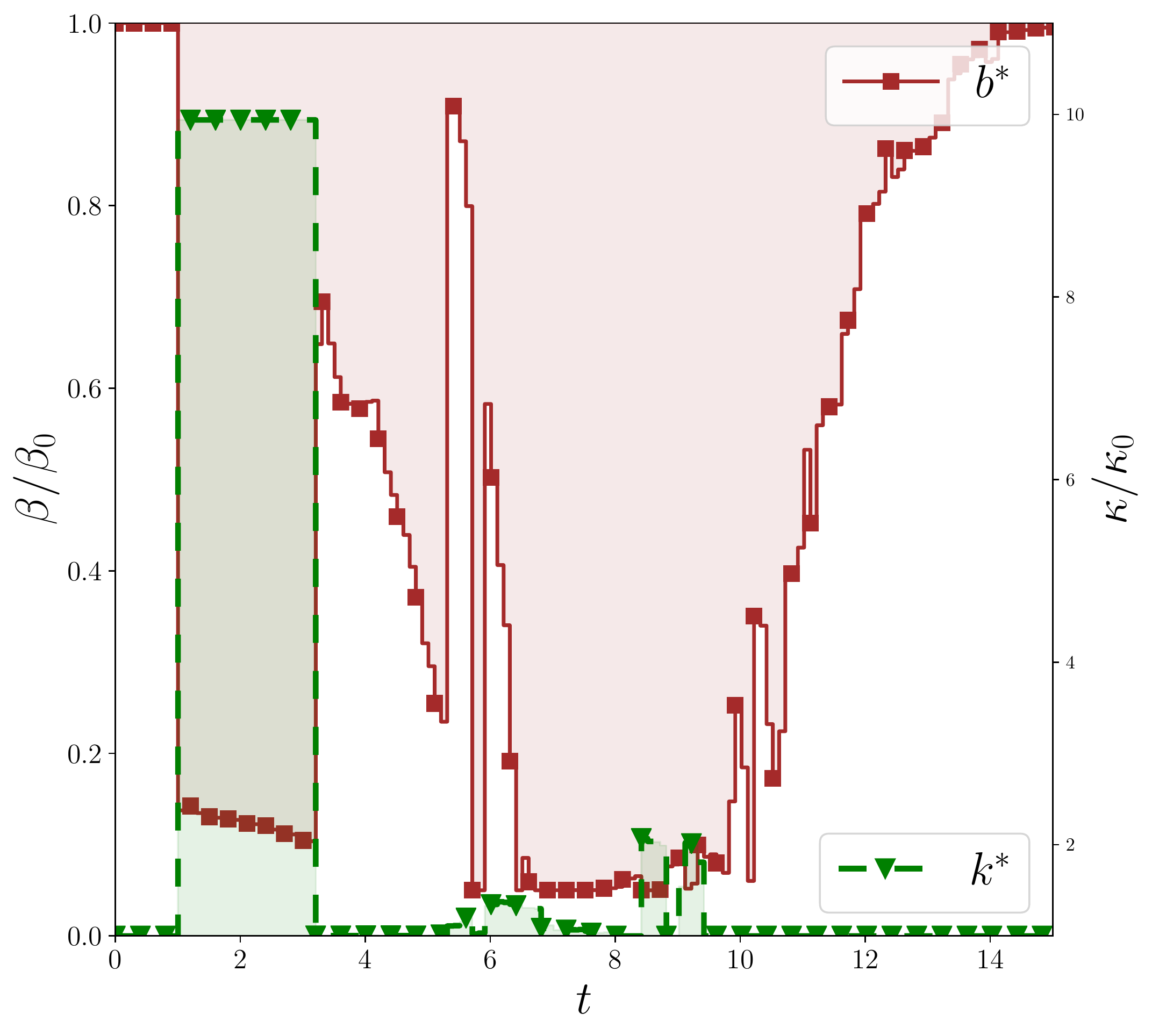}
  \caption{Evolution of the controls over the first 15 days. Afterwards, the controls remain constant.}
  \label{fig:learnedMdlCtrlSubFig2}
\end{subfigure}
\begin{subfigure}{.35\linewidth}
  \centering
  \includegraphics[width=\linewidth]{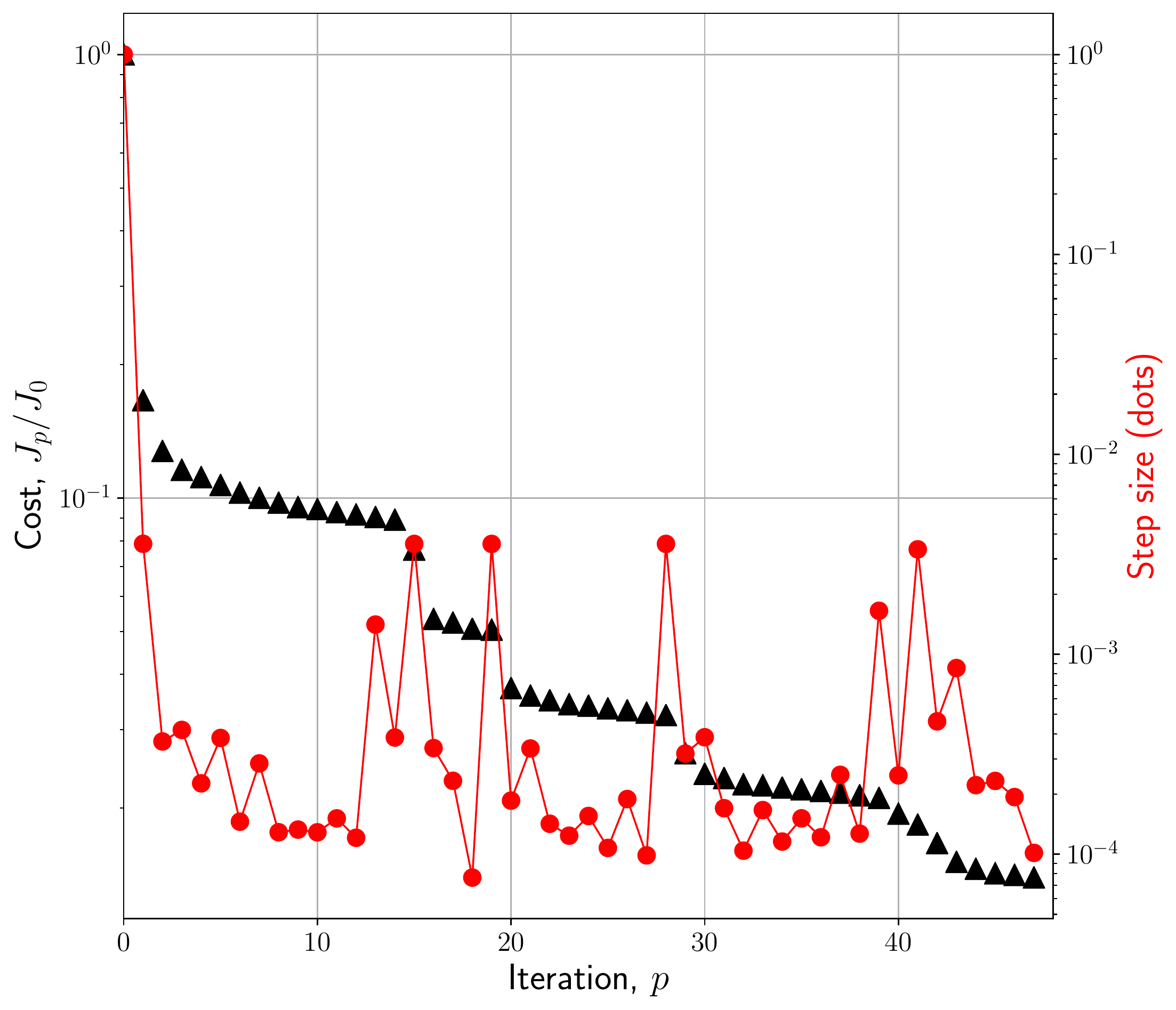}
  \caption{Cost evolution (black triangle markers) and descent step evolution.\\}
  \label{fig:learnedMdlCtrlSubFig3}
\end{subfigure}
\caption{Example of controlled trajectories. Parameters: $(n,\beta_0,\kappa_0)=(0.6,0.8,0.4)$, $T_c = 1$, $T=50$, $\Delta t = 0.1$, $\delta=10^{-7}$, $\varepsilon=10^{-2}$, $(I_\mathrm{hosp},I_\mathrm{max})=(5\%, 10\%)$, $(\omega_\beta,\omega_\kappa,\omega_{\mathrm{hosp}})=(0.2,0.2,0.6)$. Here 50 iterations of gradient are required to converge. On the right figure, $J_p$ denotes the value of $J_\delta$ at the $p$-th iteration.}
\label{fig:learned_mdl_control_example}
\end{figure}

In Figure \ref{fig:learned_mdl_control_example}, we give an example of control computation on the reduced model \emph{independently of} the individual-based model. On the left, we see an example of uncontrolled (dashed lines) and controlled (solid lines) trajectories. We obtain that the maximum number of infected is exceeded during a very short time compared to the uncontrolled trajectories. Since the beginning of the control is delayed (by $T_c$) and since it is not realistic to set $\beta$ (resp. $\kappa$) too low (resp. too high), it is sometimes not possible to avoid exceeding $I_{\mathrm{max}}$. Let us also note that the implemented gradient algorithm allows a priori to determine \emph{only local} minima: there is no guarantee that a global minimizer has been obtained. In the middle, we plot the associated controls of the coefficients $\beta$ and $\kappa$. Finally, it is shown on the right the evolution of the cost function and size of the descent step. As expected, we obtain a decreasing cost function. This example provides an overview of the accuracy of the algorithm and we will now focus on the final algorithm for the individual-based model.

\subsection{Control of the individual-based model (IBM) based on reinforcement learning}\label{sec:OC_of_IBM}
Recall that in Section \ref{sec:learning_global_model}, the complexity of the individual-based model was simplified to obtain a reduced model \eqref{eq:learned_model} consisting of only two deterministic ODEs, at the cost of richness and accuracy.
The reduced model approaches the dynamics of the individual-based model over a wide range of constant parameters (i.e., large ranges of values of $n$, $\beta$, and $\kappa$). In Section \ref{sec:OC_learned_system}, we looked for an optimal health policy specifically for the reduced model. A question then naturally arises: to what extent can a control minimizing the cost function of the optimal control problem for the reduced system be used to obtain a "good" control for the original individual-based model?

In what follows, we seek to improve the above mentioned approach. To this end, we follow an approach borrowed from the theory of model-based reinforcement learning in order to build a model whose sole purpose is to approximate only locally, but very accurately, the dynamics of the IBM. We propose to use the \textbf{model predictive method (MPC)} which consists in alternating between a learning step on the model and an optimal control step. At each iteration, the control trajectory is recomputed based on the IBM and added to the data set used to train the reduced model. In the MPC literature for reinforcement learning problems, there are two families of methods: the global model-based method or the local model-based method (see e.g. \cite{koziel2013surrogate,conn2009introduction}). In Section \ref{sec:model_limitations}, we have seen that it is difficult to build a versatile model capable of handling time-varying parameters $(\beta,\kappa)$. Therefore, we propose to use a more \emph{local} approach. It is common practice to use a valid \emph{linear} model just around a state $(S(t^m),I(t^m))$. However, since we are here able to efficiently control a non-linear system, we propose to compute a valid \emph{non-linear} system around a \emph{whole} trajectory. This choice appears to be a compromise between a local and a global model. We will now describe the algorithm used to control the IBM.

\paragraph{Local model predictive control approach.}
Suppose the setting is such that at the beginning of an epidemic, the authorities have recorded a percentage of infected people $I_{\mathrm{0}}$ with an estimated coefficient of dispersion $\kappa_{\mathrm{0}}$. Moreover, we assume that the transmission rate $\beta_{\mathrm{0}}$ of the disease is known, and that an optimal control problem was defined for the local model \eqref{eq:controlled_system} we aim to construct.

To begin with, the function $F_{\theta(0)}$ is trained on $\mathcal{D}_0$, a very small fraction (e.g. 5-10\%) of the shuffled training dataset $\mathcal{D}$, introduced in Section \ref{sec:learning_global_model}. In other words, the neural network defining $F_\theta$ receives information about the IBM dynamics corresponding to a reduced but representative region of the parameter space $(n,\beta,\kappa)$. The main objective of this exploration is \textit{stability}: it guarantees that the solutions of the ODE \eqref{eq:controlled_system} do not blow up during the numerical computation. Recall that the definition of the population size ratio $n$ has been introduced and commented in Section~\ref{sec:learning_global_model}.

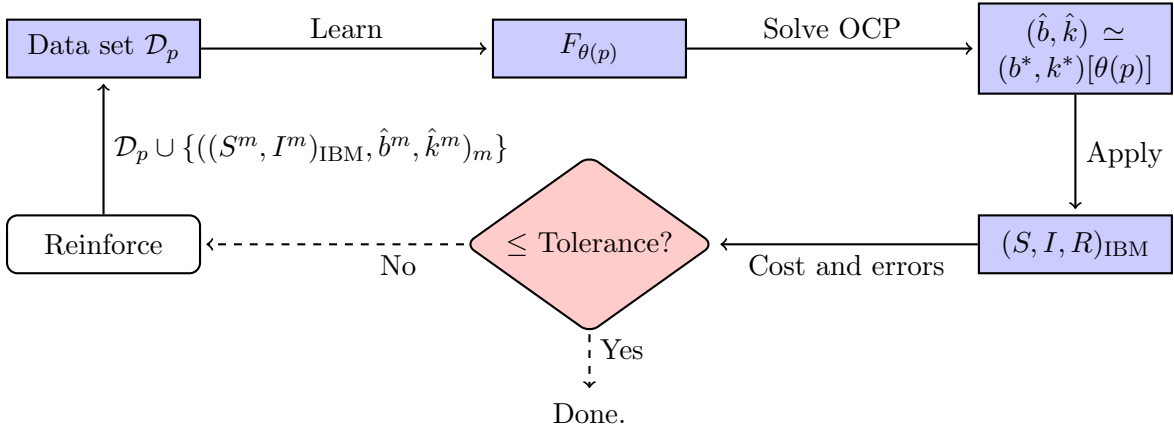
\begin{figure}[!h]
    \centering
    \begin{tikzpicture} [
    auto,
    decision/.style = { diamond, draw=black, thick,
                        text width=7em, fill=red!20, text badly centered,
                        inner sep=1pt, rounded corners },
    block/.style    = { rectangle, draw=black, thick, 
                        fill=blue!20, text width=6em, text centered, minimum height=2em },
    reinfor/.style    = { rectangle, draw=black, thick, rounded corners,
                        text width=6em, text centered, minimum height=2em },
    line/.style     = { draw, thick, ->, shorten >=2pt },
  ]
  \matrix [column sep=35mm, row sep=8mm] {
                     \node [block] (baseDS) {Data set $\mathcal{D}_p$};   & \node [block] (F_theta) {$F_{\theta(p)}$};    & \node [block] (OC) {$(\hat b, \hat k) \simeq (b^*, k^*)[\theta(p)]$}; \\
                    \node [reinfor] (add_data) {Reinforce};               & \node [decision,aspect=1.4] (tol){$\leq $ Tolerance?}; & \node [block] (IBM) {$(S,I,R)_{\mathrm{IBM}}$}; \\
                                                                          & \node [text centered] (out) {Done.};           &                                                 \\
  };
  \begin{scope} [every path/.style=line]
    \path (baseDS)      --    node {Learn} (F_theta);
    \path (F_theta)      --    node {Solve OCP} (OC);
    \path (OC)       --    node {Apply} (IBM);
    \path (IBM)      --    node {Cost and errors} (tol);
    \draw [dashed] (tol)   -- node [near start] {No} (add_data);
    \path (add_data)   -- node [right] { $\mathcal{D}_p\cup\{((S^m,I^m)_{\mathrm{IBM}},\hat{b}^m, \hat{k}^m)_m\}$} (baseDS);
    \draw [dashed] (tol) -- node [near start] {Yes} (out);
  \end{scope}
\end{tikzpicture}
    \caption{Generic iteration $p$ of the reinforcement algorithm used to estimate an efficient health policy for the IBM. Potential policies are selected among optimal $(b^*, k^*)$ controls for a reduced local model involving a parametrized function $F_\theta$. Throughout the algorithm, the weights $\theta$ are sequentially updated until a given control satisfies a criterion related to the cost function of the optimal control problem \eqref{eq:OCP}.}
    \label{fig:flowchart_local_RL}
\end{figure}

Let us now describe a current iteration of the control algorithm using the MPC approach. Assume that the $p$-th iteration of the algorithm begins and that the neural network has a weight configuration $\theta(p)$. Figure \ref{fig:flowchart_local_RL} shows a flowchart of the proposed approach based on the MPC method. Following the steps described in Section \ref{sec:OC_learned_system}, we can thus estimate a control $(b^*, k^*)$ optimally driving the reduced model \eqref{eq:controlled_system} based on the knowledge induced by the weight configuration $\theta(p)$. Depending on the fineness of the partition of the time interval of interest, this optimal health policy may correspond to measures evolving freely on an unrealistic time scale (a few days or a few hours). For this reason, the control $(b^*, k^*)$ is approximated via a regression tree (computed using the SK-learn library) by two piecewise constant functions, denoted $(\hat b, \hat k)$, taking at most 8 different values over a time horizon of 200 days. Starting from the initial configuration corresponding to the operating point $(S,I,n,\beta,\kappa)_{\mathrm{0}}$, the IBM is then simulated under this last policy and the corresponding scenario denoted by $(S,I,R)_{\mathrm{IBM}}$.

We decide to stop the algorithm when the obtained health policy is sufficiently efficient. The stopping criterion is based on the comparison of a cost inspired by the cost function for the reduced optimal control problem \eqref{eq:OCP}. More precisely, we say that the current control $(\hat b, \hat k)$ is \textit{acceptable} with tolerance $\tau_\mathrm{RL}>0$ if 
$$ c_p \leq \tau_\mathrm{RL} c_0,$$ 
where the cost $c_p$ associated with the $p$-th scenario is given by
$$c_p := \int_{T_c}^T \omega_{\mathrm{hosp}} \left(\frac{I_{\mathrm{IBM}}(t)}{I_{\mathrm{hosp}}}-1\right)_+^2 
                      + \frac{1}{\varepsilon} \left(\frac{I_{\mathrm{IBM}}(t)}{I_{\mathrm{max}}}-1\right)_+^2 \mathrm{d}t,$$
and $c_0$ is the cost associated with the control-free IBM solution. Recall that $T_c$ is the beginning time of the intervention of the health authorities (detailed in Section~\ref{sec:OC_learned_system}). In addition to requiring that the control to be acceptable, the $p$-th scenario must, under the same $(\hat b, \hat k)$ control, be associated with a lower cost than the cost of reduced model under the $(\hat b, \hat k)$ control. In other words, by denoting by $c_p^{\rm reduced}$, the cost corresponding to the reduced model with $(\hat b, \hat k)$, the algorithm does not stop until $c_p \leq c_p^{\rm reduced}$. This stopping criterion allows us to \emph{relate the performance} of the control on the reduced model and on the IBM.

Since the success of the algorithm depends upon the ability of the reduced model to \emph{accurately predict} the output of the IBM, the stopping criterion also involves the following three error metrics: we retrieve global information by computing the discrete $L^2$-norm of the difference between the reduced model and the IBM for the state variables $S$ and $I$, and estimating the mismatch between the final proportion $\mathcal{R_\infty}$ of removed people, defined by~\eqref{eq:R_infty_definition}. Accuracy is also assessed by measuring the delay between the time at which the infection peak (IP) occurs, respectively for the IBM and reduced model. The numerical values of the associated tolerances $\tau_{L^2}, \tau_{\mathcal{R_\infty}}$ and $\tau_{\mathrm{IP}}$ are shown in Table \ref{tab:results_paramOCP}.

If the stopping criterion is not satisfied, then the reduced model is strengthened by training the weights $\theta(p+1)$ on a \emph{larger} training set $\mathcal{D}_{p+1}$ containing not only $\mathcal{D}_p$, but also the local information corresponding to the $p$-th scenario $(S,I,R)_{\mathrm{IBM}}$ as well as the parameters defining the candidate control $(\hat{b}, \hat k)$. The above sequence of steps repeats until these criteria are satisfied, at which point the output of this algorithm is the $(\hat b, \hat k)$ health policy corresponding to the last iteration.

Note that, in an attempt to reduce the computational cost and to escape as much as possible from local minima wells, at each $p$ reinforcement step, the optimal control algorithm is initialized with the control obtained at the end of the previous reinforcement step. Moreover, the more we advance in the reinforcement algorithm, the more precise the optimal control algorithm must be (more iterations, smaller step sizes). The reasoning behind this last point is that in the first few iterations of reinforcement, high accuracy is not so important because the behaviour of the reduced model under control is likely to be an unfaithful approximation of that of the IBM.
 
\section{Numerical results}\label{sec:results_local_RL}
In this section, we provide numerical simulations for reinforcement learning based on the model introduced in Section~\ref{sec:OC_of_IBM}. Since it is not easy to make statistics on these results, we will illustrate our approach with numerous examples. In each case, we provide  the quantities $n$, $\beta_{\mathrm{0}}$, $\kappa_{\mathrm{0}}$ and the number of iterations of the reinforcement learning algorithm. We plot on each of them the trajectories relative to susceptible individuals on the left,
the trajectories for the infected in the middle, and the control for the IBM model (red for the $\beta$ control, green for the $\kappa$ control) on the right. Scales for $\beta/\beta_0$ and $\kappa/\kappa_0$ are shown on the left and right on the right figure. In Table~\ref{tab:results_paramOCP}, we specify the parameters common to all the test cases. If one of these parameters changes, it will be written in the legend of the figure.

\begin{table}[h]
    \centering
    \begin{tabular}{cc|cc|cc}
        \toprule
        Parameters        & Values                      & Param.           & Val.  & Param. & Val.\\
        \midrule
        $S_{\mathrm{0}}$      & $99.95\%$     &  $(I_{\mathrm{hosp}}, I_\mathrm{max})$ & $(0.025, 0.1)$ & $T_c$   &  1 \\ 
        $I_{\mathrm{0}}$    & $0.05\%$         & $(\omega_\beta,\omega_\kappa,\omega_{\mathrm{hosp}})$ & $(0.2, 0.2, 0.6)$ & $T$        & 200 \\
        $\gamma$            &  1/6              & $(b_\mathrm{min},k_\mathrm{max})$ & (0.1,10) & $\Delta t$      & 2/7 \\
        $\tau_\mathrm{RL}$ & $10^{-3}$       & $\varepsilon$ & $10^{-2}$ & $\tau_{\mathcal{R}_\infty}$ & $10^{-3}$\\
        $\tau_{L^2}$ & 1             & $\delta$ & $10^{-7}$  & $\tau_\mathrm{IP}$&6 \\
        \bottomrule
    \end{tabular}
    \caption{Numerical values of the main parameters taking part in the reinforcement algorithm (Section \ref{sec:OC_of_IBM}) and \eqref{eq:OCPreg}. These are common to all results shown in Section \ref{sec:results_local_RL}.}
    \label{tab:results_paramOCP}
\end{table}

Since the legend of the figures is the same for all the tests, let us explain below the notations we use: 
\begin{itemize}
    \item $\operatorname{IBM}$ denotes an average trajectory (based on 50 simulations) for the IBM \emph{without} control,
    \item $\operatorname{IBM}^{C}$ denotes an average trajectory (based on 50 simulations) for the IBM \emph{with the final control} obtained by the reinforcement learning algorithm (individual trajectories are in grey),
    \item $\operatorname{RM}^{C}$ denotes a trajectory produced by the reduced model trained \emph{only} on the {initial} data set $\mathcal D_0$ with the {final} control of the algorithm,
    \item $\operatorname{RM}^{RLC}$ denotes a trajectory produced by the reduced model \emph{after} model-based reinforcement with the {final} control of the algorithm.
    \item $\hat b$ and $\hat k$ denote the final controls, piecewise constant, provided by the algorithm. The vertical dashed segments indicate times when changes in control values occur.
\end{itemize} 
    
The presentation of the results is divided into three parts. In the first one, we shed light on the outcome of simulations corresponding to several parameter configurations. Then, we focus on how the number of iterations of reinforcement affects the reduced model accuracy and the effectiveness of the associated control. Lastly, we turn our attention to the limitations and drawbacks of the proposed reinforcement learning approach.

\subsection{Examples of control dynamics}
\label{sec:results_local_RL_1}

We here present several results in different configurations. We recall that, in the optimal control problem associated with the IBM system, we only consider the cost on the infected peoples, and not the terms related to the control. 

To begin with, we consider two cases where the population size ratio and $\kappa$ are large (large population and homogeneous contact regime), meaning that they are in the validity regime of the classical SIR dynamics. We observe the results on this type of configuration on Figures \ref{fig:examp_RL_loc_1}-\ref{fig:examp_RL_loc_2}.
On the first one, we observe that the strong constraint on $I_{\mathrm{max}}$ is preserved and the number of infected stays close to $I_{\mathrm{hosp}}$. In this case the reduced model learned only with $\mathcal{D}_0$ (blue curve) 
 is pretty good. For the second case, we obtain similar results but see that the reinforcement step allows to increase the accuracy of the reduced model which subsequently improves the control efficiency.

\begin{figure}[ht!]
    \centering
    \includegraphics[width=\linewidth]{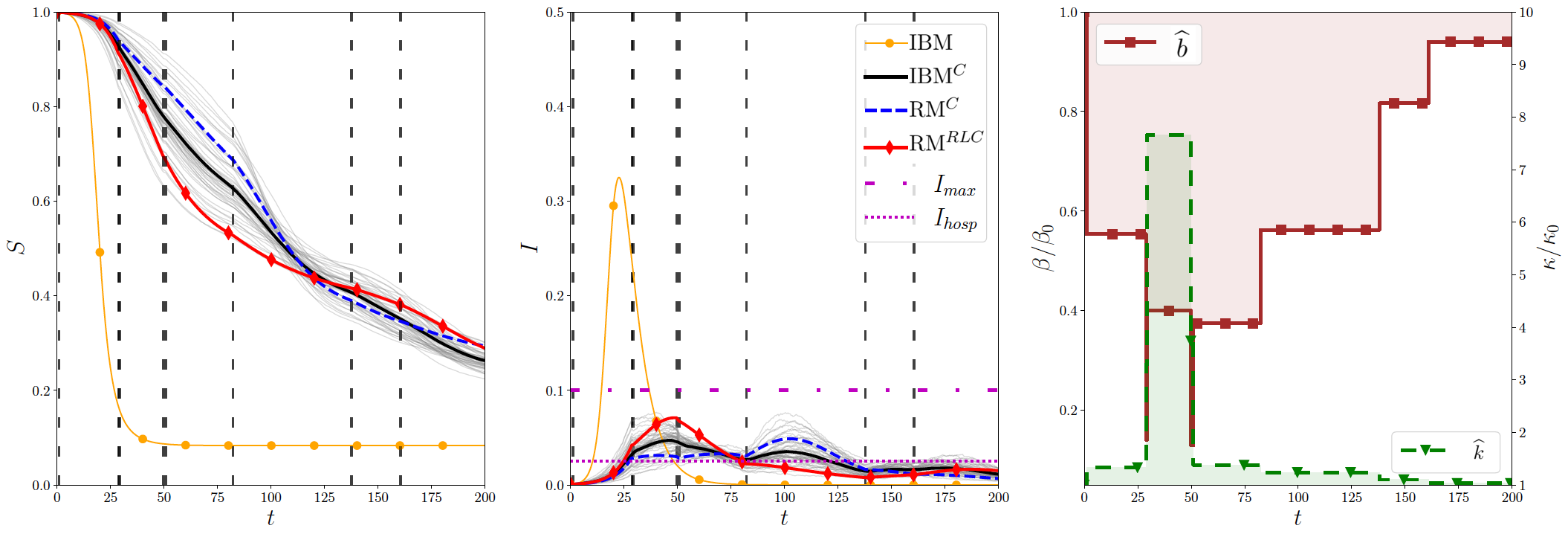}
    \caption[flushleft]{$n=0.95$, $\beta_0=0.5$, $\kappa_0=9$, $10$ iterations\label{fig:examp_RL_loc_1}}
\end{figure}

\begin{figure}[ht!]
    \centering
    \includegraphics[width=\linewidth]{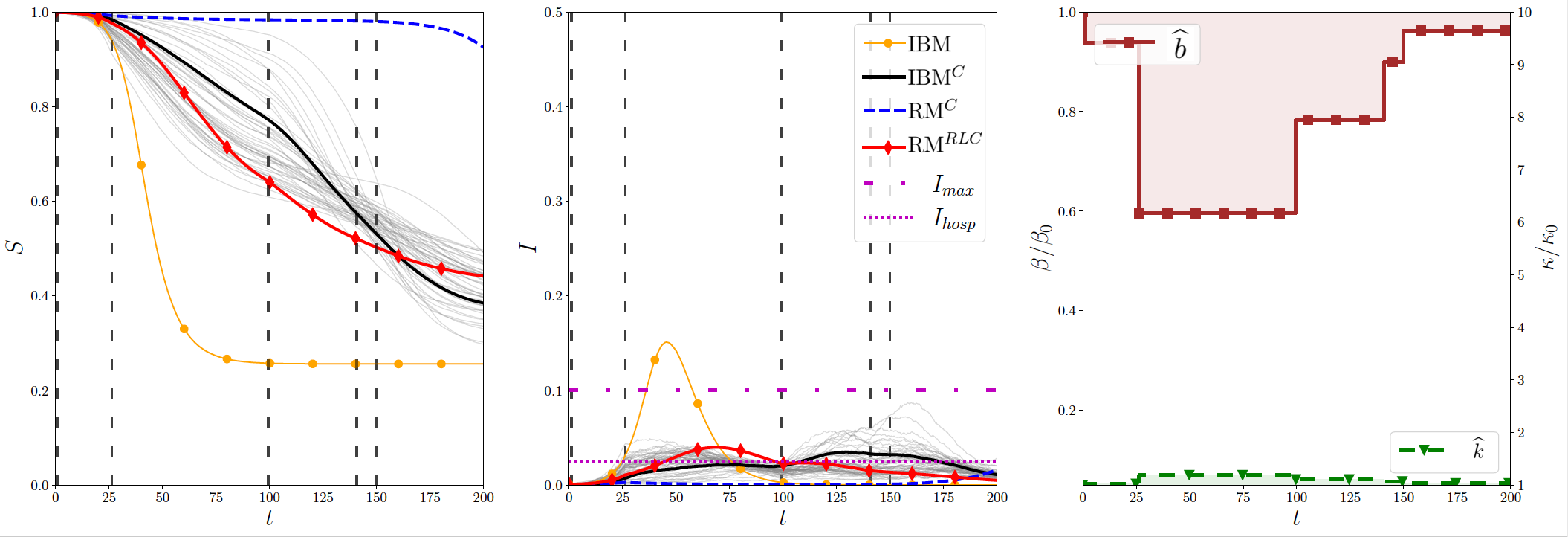}
    \caption[flushleft]{$n=0.85$, $\beta_0=0.8$, $\kappa_0=10$, $6$ iterations\label{fig:examp_RL_loc_2}} 
\end{figure}

In Figures \ref{fig:examp_RL_loc_3}-\ref{fig:examp_RL_loc_4}, we stray away from classical population-level regimes by considering intermediate population sizes and dispersion coefficients. In this slightly more complicated regime, stochastic behaviours are commonly observed in the IBM simulations. Nevertheless, in the first case (Figure \ref{fig:examp_RL_loc_3}), the reinforced reduced model faithfully approximates the IBM average trajectory and the control is effective enough to ensure that $I$ does not exceed the threshold $I_\mathrm{max}$. Similarly the control policy remains satisfying in the second case (Figure \ref{fig:examp_RL_loc_4}), although it is more difficult for the reduced model to capture the averaged random behaviour of the IBM due to the very low value of parameter $n$.

\begin{figure}[h!]
    \centering
    \includegraphics[width=\linewidth]{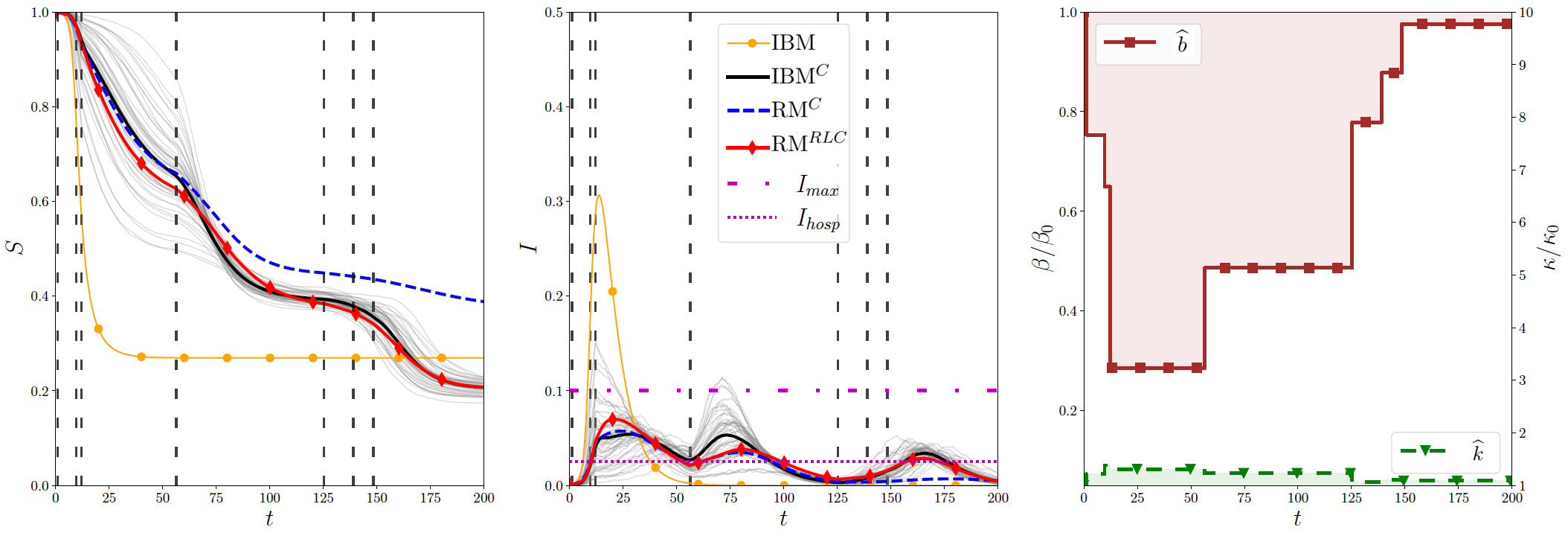}
    \caption[flushleft]{$n=0.6$, $\beta_0=0.5$, $\kappa_0=1$, $15$ iterations}
    \label{fig:examp_RL_loc_3}
\end{figure}

\begin{figure}[ht!]
    \centering
    \includegraphics[width=\linewidth]{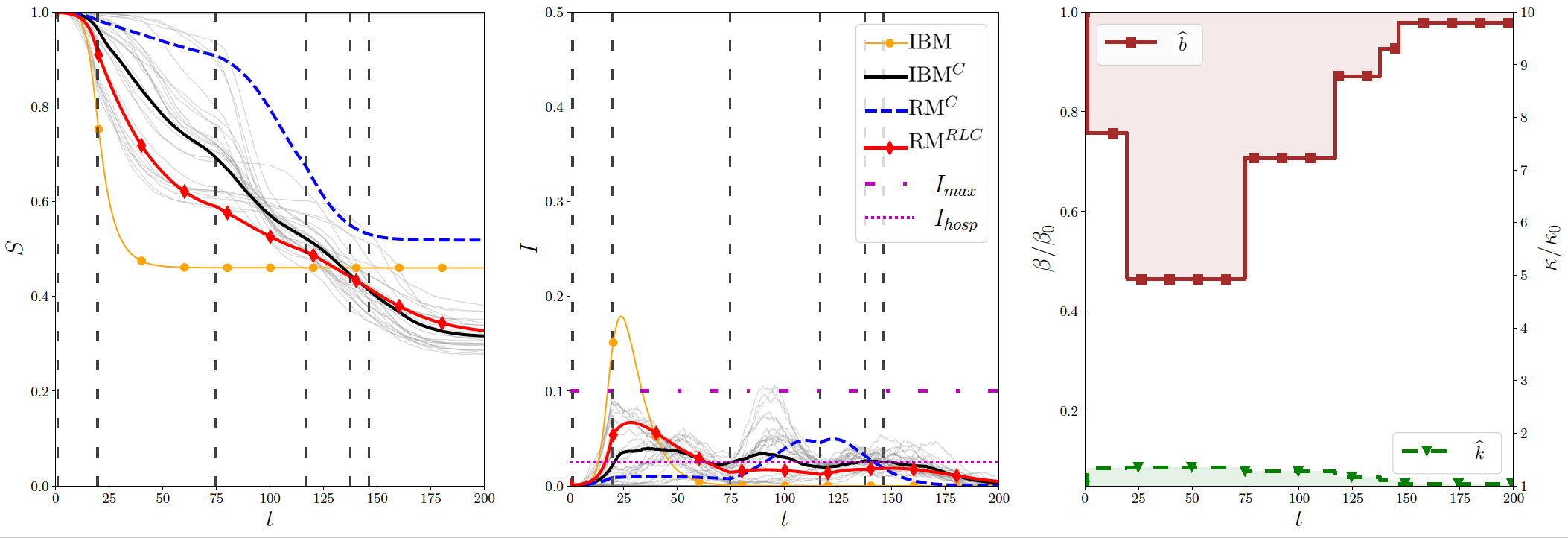}
    \caption[flushleft]{$n=0.2$, $\beta_0=0.3$, $\kappa_0=0.8$, $7$ iterations}
    \label{fig:examp_RL_loc_4}
\end{figure}

In Figure \ref{fig:examp_RL_loc_5}, we consider a case similar to the one investigated in Figure \ref{fig:examp_RL_loc_3} where weights in the cost function are modified. We strengthen the penalization on $\beta$, and weaken the ones on $\kappa$ and the infected population. We observe as expected that the control on $\kappa$ plays a more important role than previously, but it does not seem to significantly change the average behaviour of the IBM.

\begin{figure}[ht!]
    \centering
    \includegraphics[width=\linewidth]{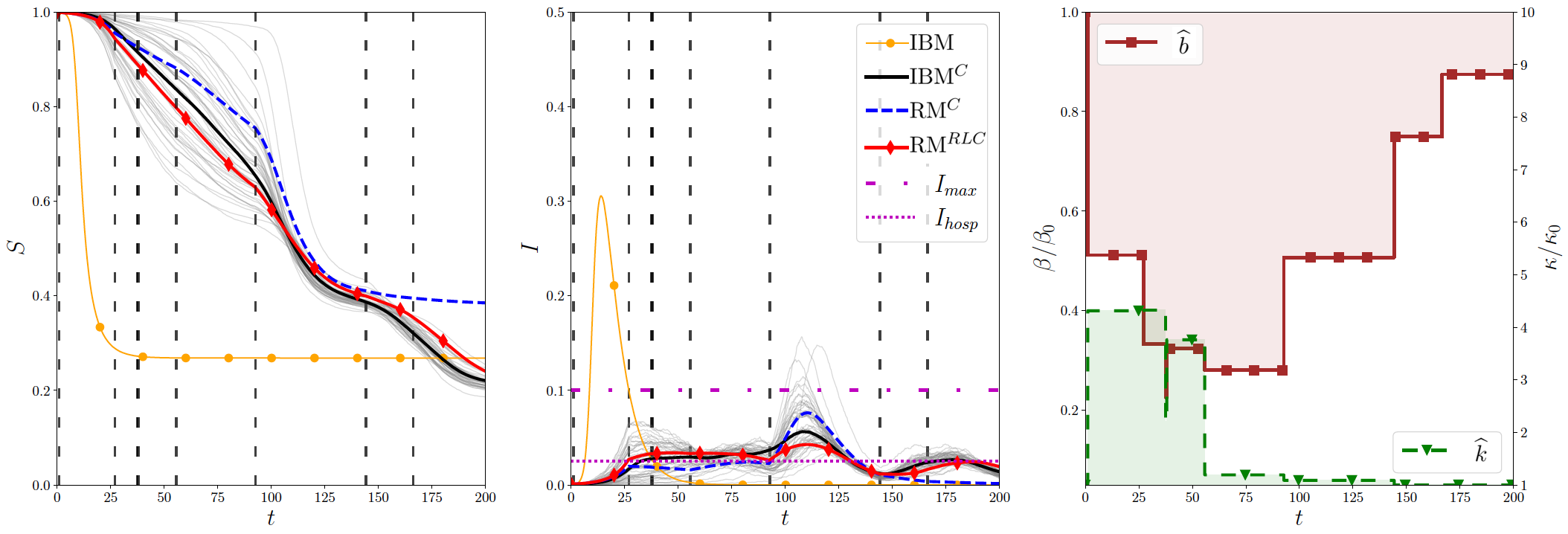}
    \caption[flushleft]{$n=0.6$, $\beta_0=0.5$, $\kappa_0=1$, $38$ iterations with $(\omega_\beta,\omega_\kappa,\omega_{\mathrm{hosp}})=(.5,.1,.4)$.}
    \label{fig:examp_RL_loc_5}
\end{figure}

In the following results, we increase one step further the difficulty by considering even lower population size ratios and dispersion coefficients. Figure \ref{fig:examp_RL_loc_6} deals with a very large dispersion effect (low $\kappa$). The resulting control is accurate and we remain far away from the strong constraint. We also observe that the reinforcement learning allows to improve a lot the reduced model which seems to be more and more faithful to the IBM trajectory, even if we observe a discrepancy between the final values of $S_{\infty}$. Indeed, since $S_\infty=1-\int_0^\infty\gamma I$, the accumulation of non-compensating errors on $I$ seems to lead to a poor estimate of $S_\infty$. Note that the cost functional in the optimal control problem does not involve $S_\infty$. In practice, this does not cause any concern because the main objective of the method is to compute an optimal control for IBM, in order to reduce the infection peaks. 

Figure \ref{fig:examp_RL_loc_7} highlights results in a very low population size regime where the graph and stochastic effects are important, yet the results are also convincing. It is notable that the reinforcement learning procedure drives to an improvement of the reduced model which in the end captures correctly the infection peaks.  
\begin{figure}[ht!]
    \centering
    \includegraphics[width=\linewidth]{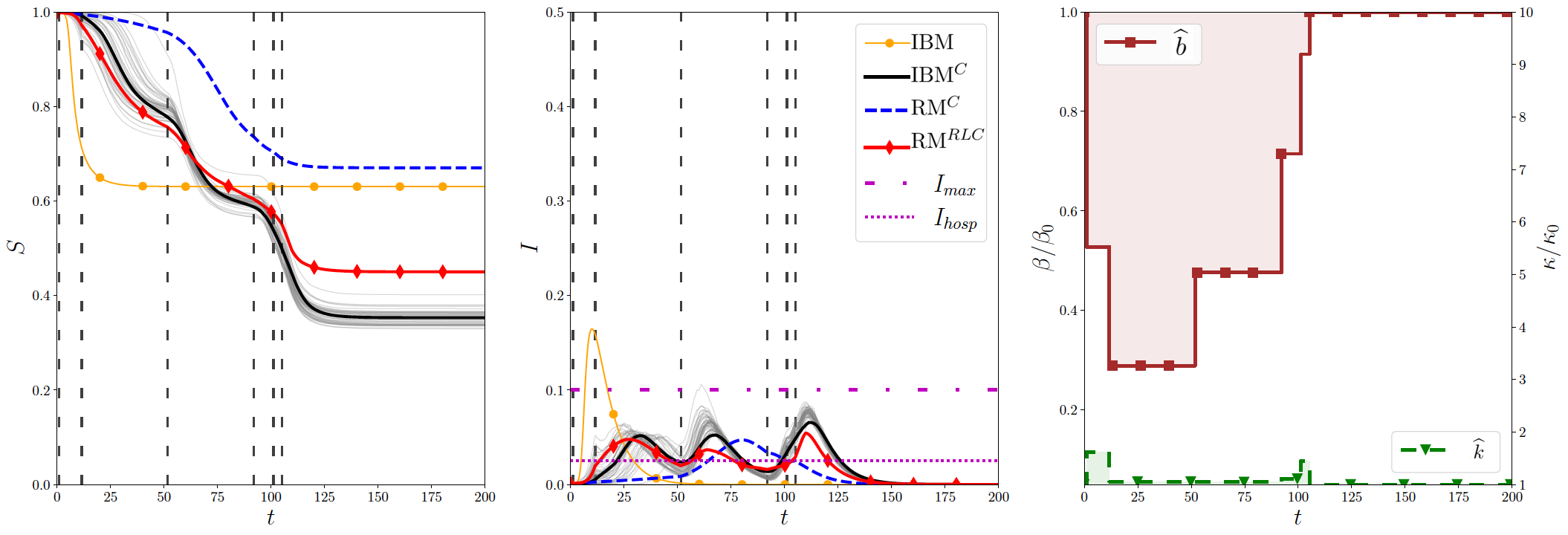}
    \caption[flushleft]{$n=0.8$, $\beta_0=0.3$, $\kappa_0=0.2$,  $44$ iterations with $(\omega_\beta,\omega_\kappa,\omega_{\mathrm{hosp}})=(.5,.1,.4)$.}
    \label{fig:examp_RL_loc_6}
\end{figure}
\begin{figure}[ht!]
    \centering
    \includegraphics[width=\linewidth]{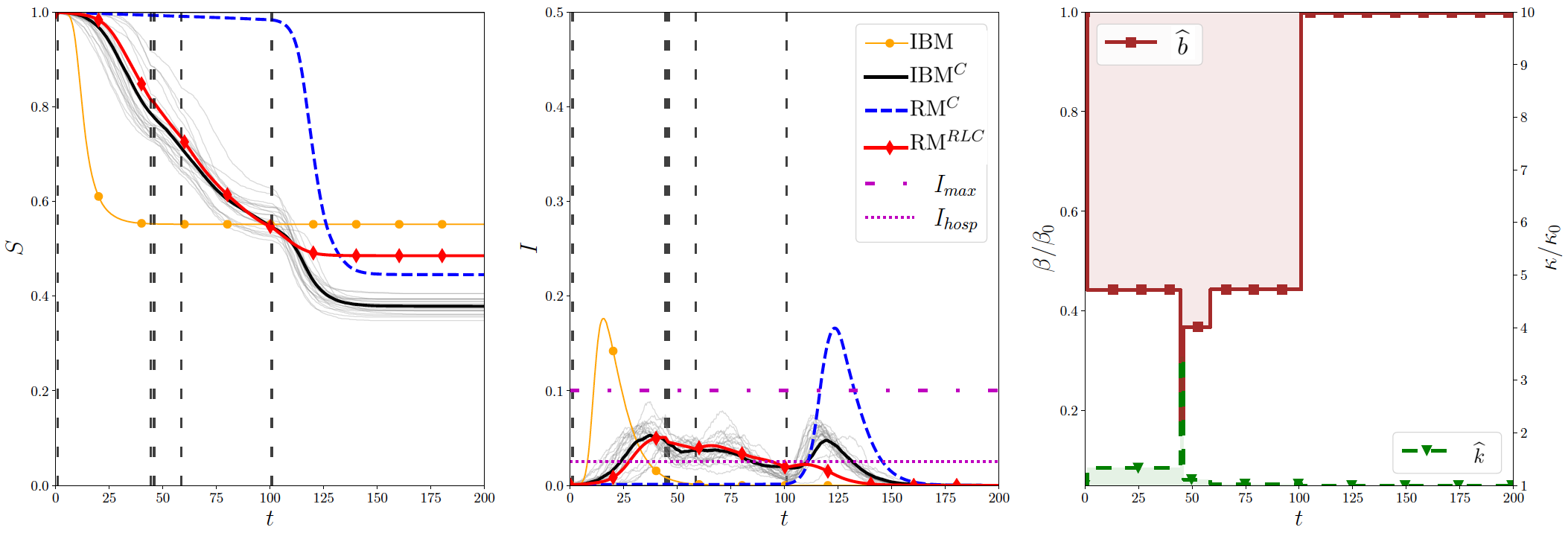}
    \caption[flushleft]{$n=0.2$, $\beta_0=0.3$, $\kappa_0=0.4$, $23$ iterations}
    \label{fig:examp_RL_loc_7}
\end{figure}

\subsection{Overall improvement with the number of iterations}
\label{sec:results_local_RL_2}

In this second subsection, we investigate how the number of reinforcement iterations affects the results of the algorithm. First, we consider a case with large dispersion in Figure \ref{fig:examp_RL_loc_8}. We observe that making some additional iterations increases a little bit the accuracy of the reinforcement learning reduced model and allows to compute a better control, since the infection peak is less close to the $I_{\mathrm{max}}$ constraint.

\begin{figure}[ht!]
    \centering
    \includegraphics[width=\linewidth]{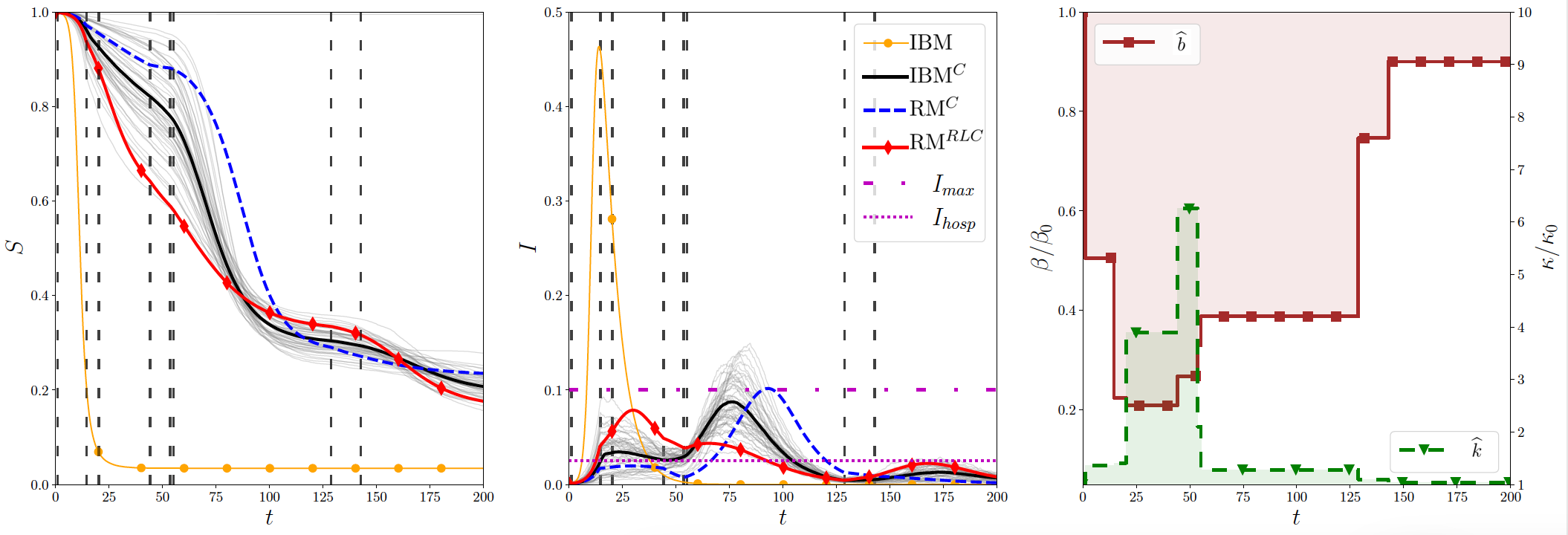}\\
    \includegraphics[width=\linewidth]{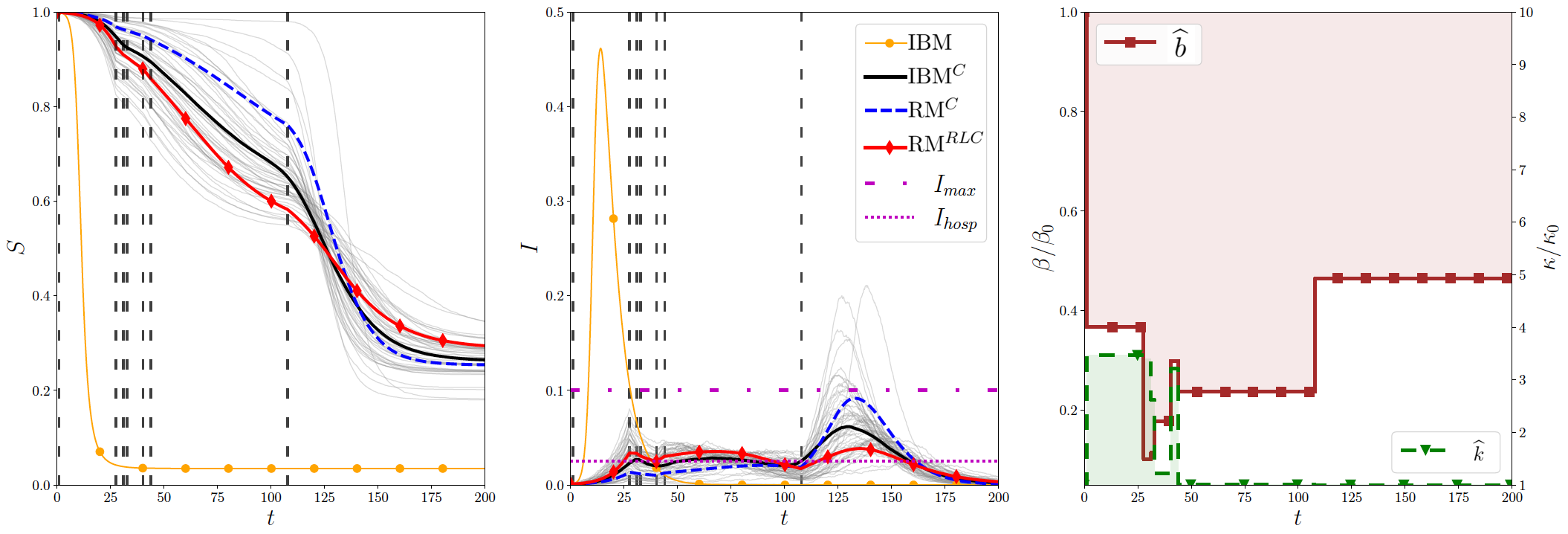}
    \caption[flushleft]{$n=0.5$, $\beta_0=0.8$, $\kappa_0=9$. $14$ iterations for the top and $30$ for the bottom.}
    \label{fig:examp_RL_loc_8}
\end{figure}

We now consider a highly dispersive test case (low $\kappa_0$) with a low $\beta_0$ but a large population size ratio, in Figure~\ref{fig:examp_RL_loc_10}. Since the epidemic is small, capturing the threshold is generally more complicated for the reduced model. Here, we compare one training after 18 and 34 iterations respectively, as well as a new training with 45 iterations and a smaller initial data set. As before we observe that increasing the number of iterations improves the accuracy of the reduced model and control. Indeed on the top of Figure~\ref{fig:examp_RL_loc_10}, the run after 18 iterations does not preserve the strong constraints, contrary to the second on the middle of Figure~\ref{fig:examp_RL_loc_10} which generates a trajectory satisfying the constraints. However, the reduced model is not flawless and previous tests show that this impacts the accuracy of the control. 

To improve control accuracy, we propose in that case to \emph{reduce the size} of the initial data set $\mathcal{D}_0$. This modification allows us to obtain a better reduced model and comparable control. A possible explanation is that the initial training may lead the neural network to learning a trajectory that deviates too far from the test case, making it difficult to explore the space of admissible trajectories. In other words, its ability to adapt to new samples may be impaired. This shows that the size of the data set $\mathcal{D}_0$ and its diversity may impact the efficiency of the algorithm. 
\begin{figure}[ht!]
    \centering
    \includegraphics[width=\linewidth]{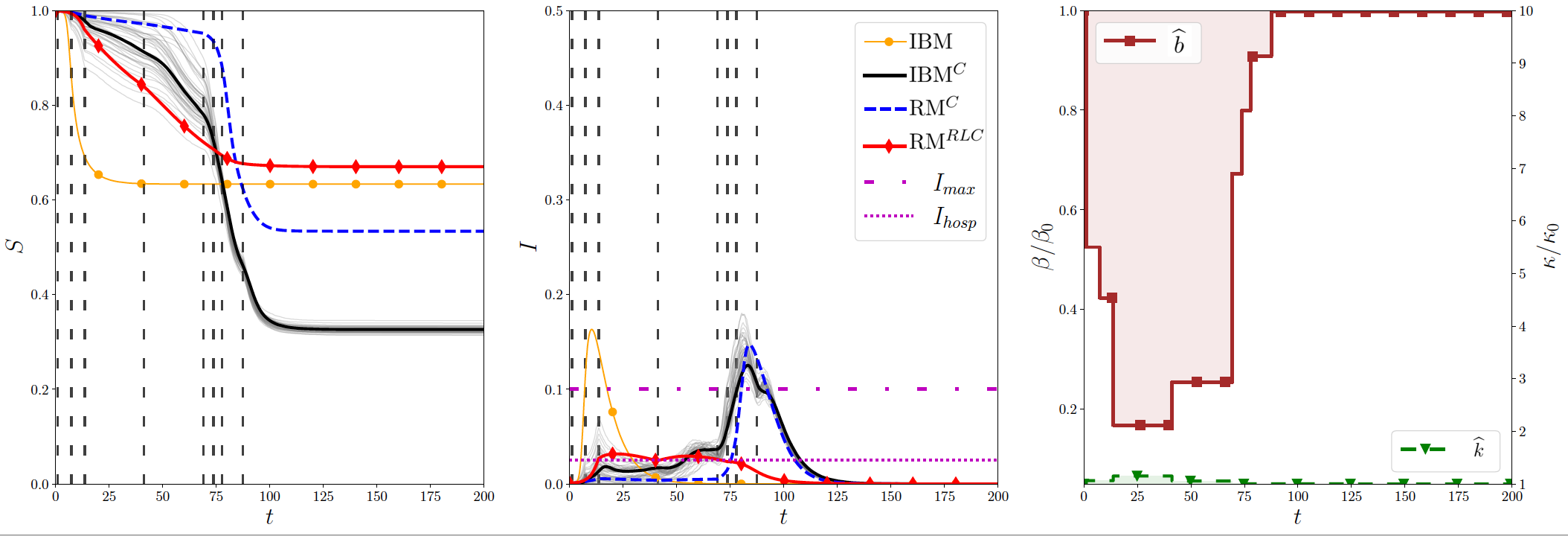}
    \includegraphics[width=\linewidth]{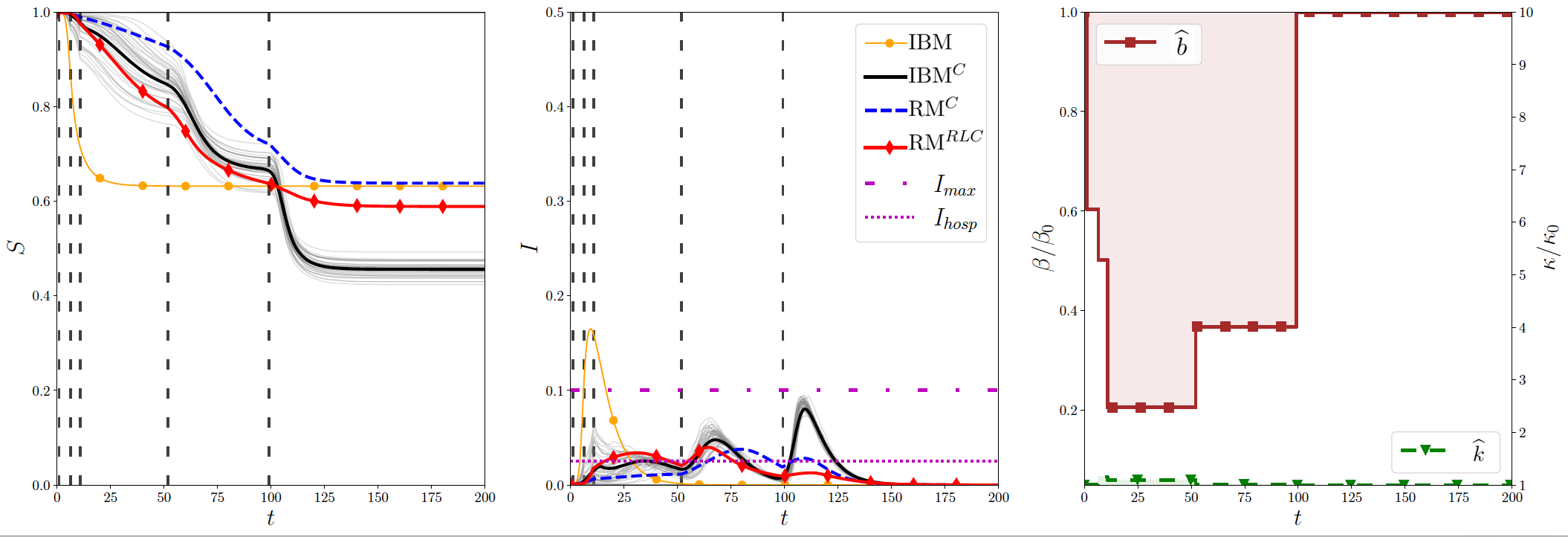}
    \includegraphics[width=\linewidth]{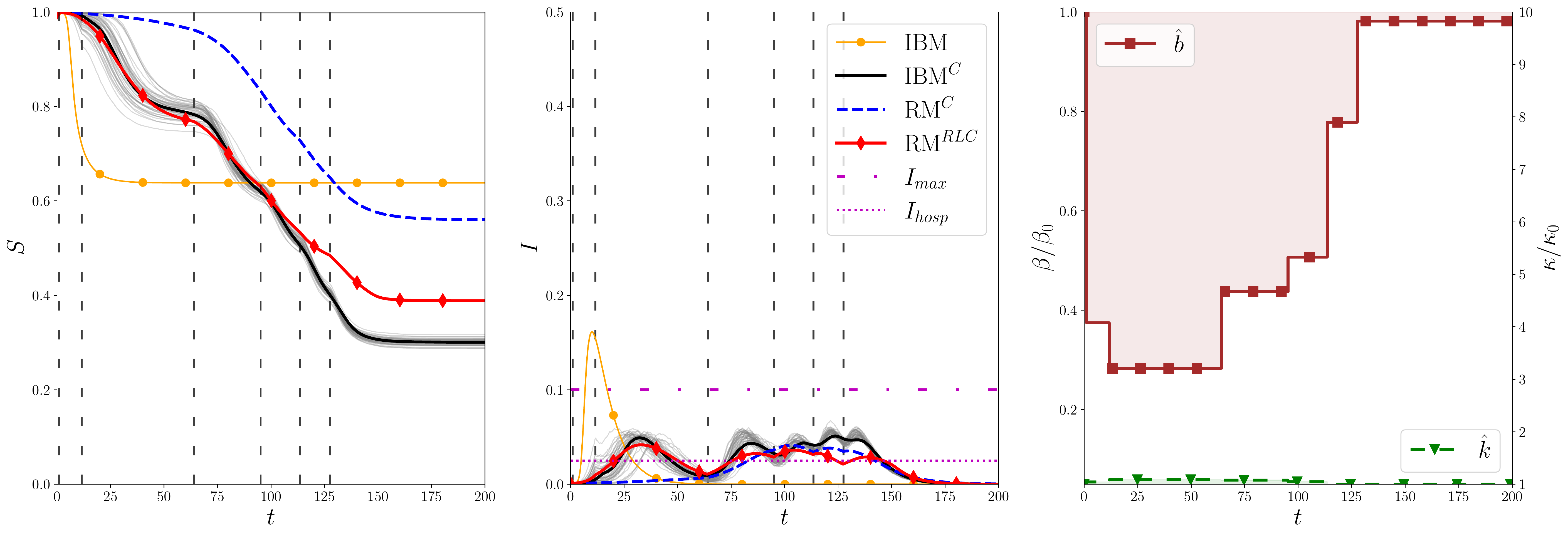}
    \caption[flushleft]{$n=0.8$, $\beta_0=0.3$, $\kappa_0=0.2$. $18$ iterations for the top, $34$ iterations for the middle and $17$ iterations with a (40\%) smaller data set $\mathcal{D}_0$ for the bottom.}
    \label{fig:examp_RL_loc_10}
\end{figure}

Figure~\ref{fig:examp_RL_loc_9} deals with a test case involving moderate dispersion and population size ratio. This example illustrates that usually, during the algorithm, the control improves while the reduced model may \emph{momentarily} worsen. Indeed, after only 8 iterations, the control fails to contain the epidemic (most stochastic trajectories of the IBM violate the strong constraint $I_\mathrm{max}$) even if the reduced model is qualitatively and quantitatively accurate. At the expense of model accuracy, making 8 additional iterations (middle plot) improves the control which now mitigates the peak of the average IBM trajectory, but not of all individual ones (grey trajectories). However, making twice as many iterations (bottom plot) leads to a control that is acceptable with tolerance $5\cdot10^{-3}$ (i.e. $c_p/c_0\leq 5\cdot10^{-3}$) and (again) to a faithful reduced model.

\begin{figure}[ht!]
    \centering
    \includegraphics[width=\linewidth]{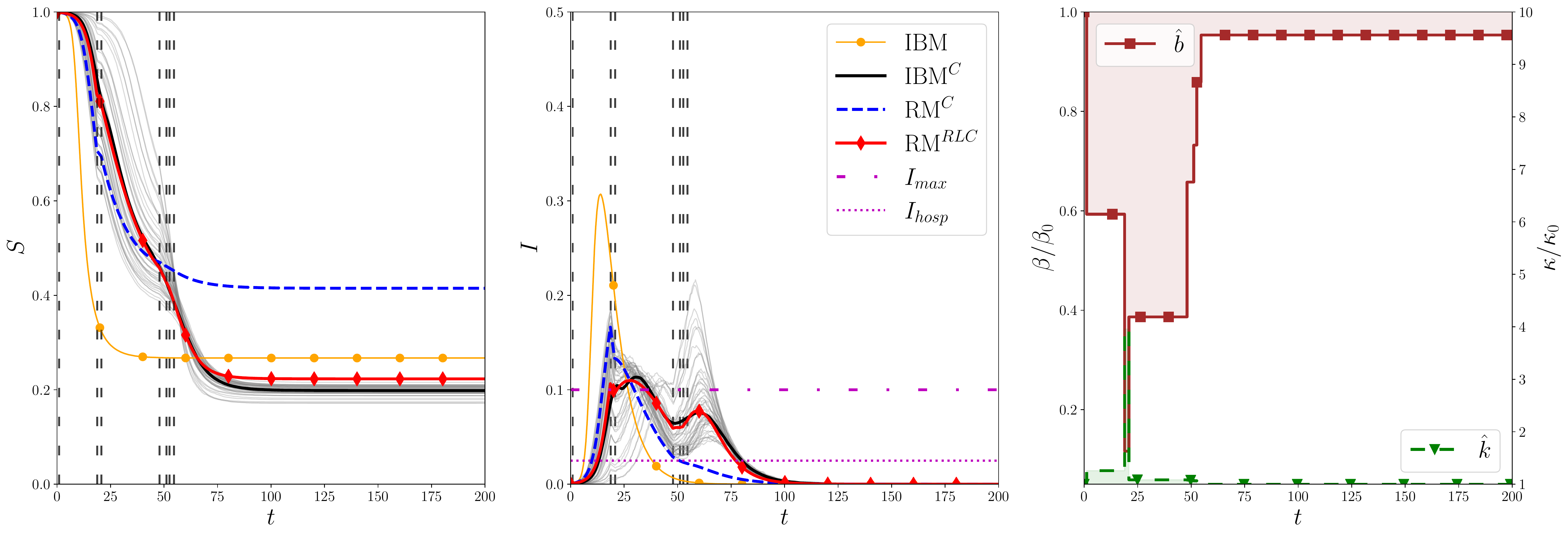}
    \includegraphics[width=\linewidth]{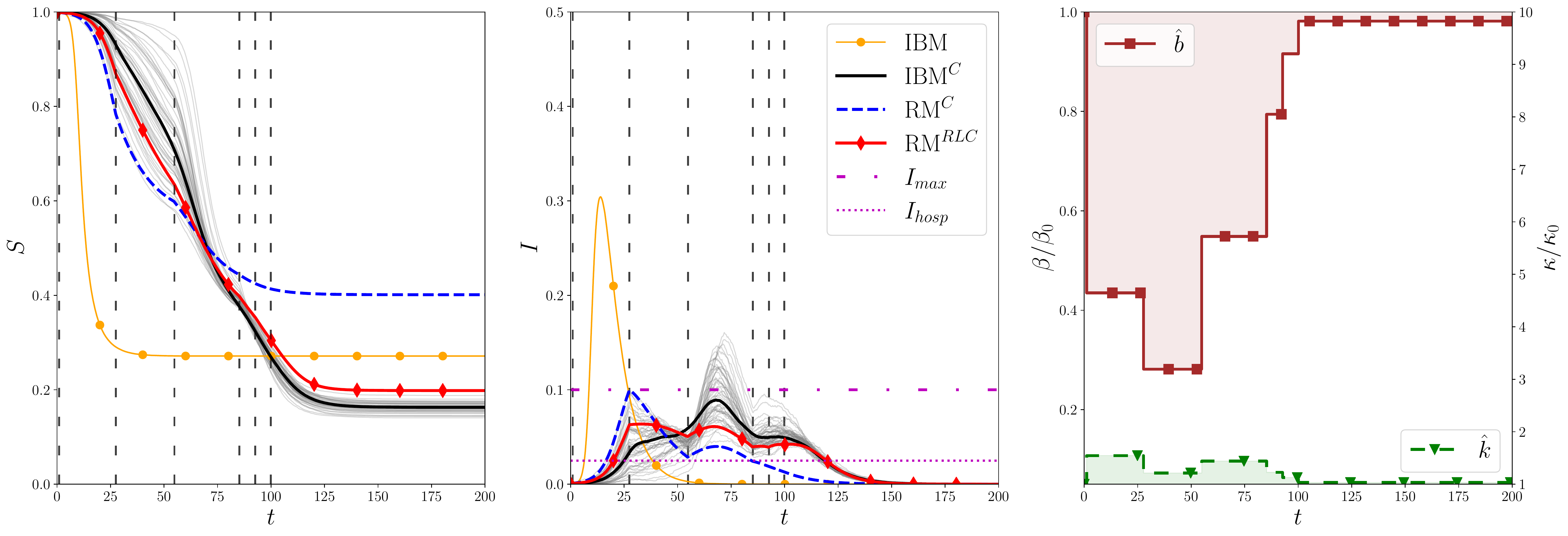}
    \includegraphics[width=\linewidth]{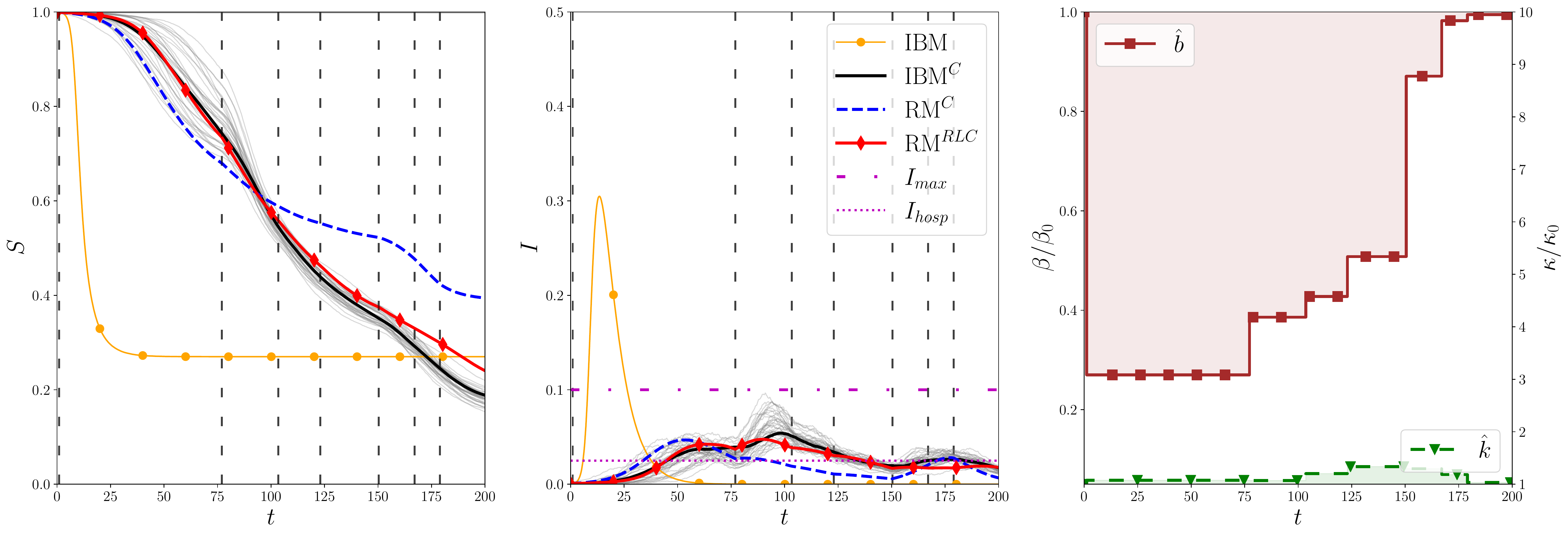}
    \caption[flushleft]{$n=0.6$, $\beta_0=0.5$, $\kappa_0=1$. Respectively $8$, $16$ and $35$ iterations for the top, middle and bottom.}
    \label{fig:examp_RL_loc_9}
\end{figure}

These results show that, whenever we make more iterations, the reduced model will generally overall become better. However, the improvement depends on the degree of randomness involved in the parameter regime at stake and it is not given that an increase in the accuracy of the model will lead to a better control. Indeed, we sometimes observe efficient controls associated with unfaithful reduced models. But, generally speaking, when the model becomes good, so does the control. That is, the "convergence" of the reduced model towards the IBM trajectory seems to guarantee that the corresponding control is accurate, although sometimes not the best.

\subsection{Fails and over-fitting}
\label{sec:results_local_RL_3}

 Our algorithm may fail to both generate an accurate reduced model and an acceptable control. We identified two possible mechanisms at the root of these seemingly rare failures. The first one relates to an \emph{over-fitting-like effect}. More precisely, throughout the reinforcement algorithm iterations, the neural network is generally fed with more and more similar samples. Thus, at a certain point, the network predictions are likely to deteriorate in an attempt to capture the dynamics associated with the mean IBM. For instance, it is seen in Figure \ref{fig:examp_RL_loc_11} that, after 18 iterations, the reduced model is very accurate even if the control is unsatisfactory. Hence, to improve the effectiveness of the control on the IBM, we try to make 7 additional reinforcement iterations. This strategy ends up paying off, but the improvement comes at the expense of a significant deterioration in the accuracy of the reduced model. Moreover, if we were to continue, the predictions of the reduced model might not improve. Considering early-stopping or a posteriori \emph{model selection among saved intermediate models} (by tracking performance criteria) may help overcoming these difficulties.

\begin{figure}[ht!]
    \centering
    \includegraphics[width=\linewidth]{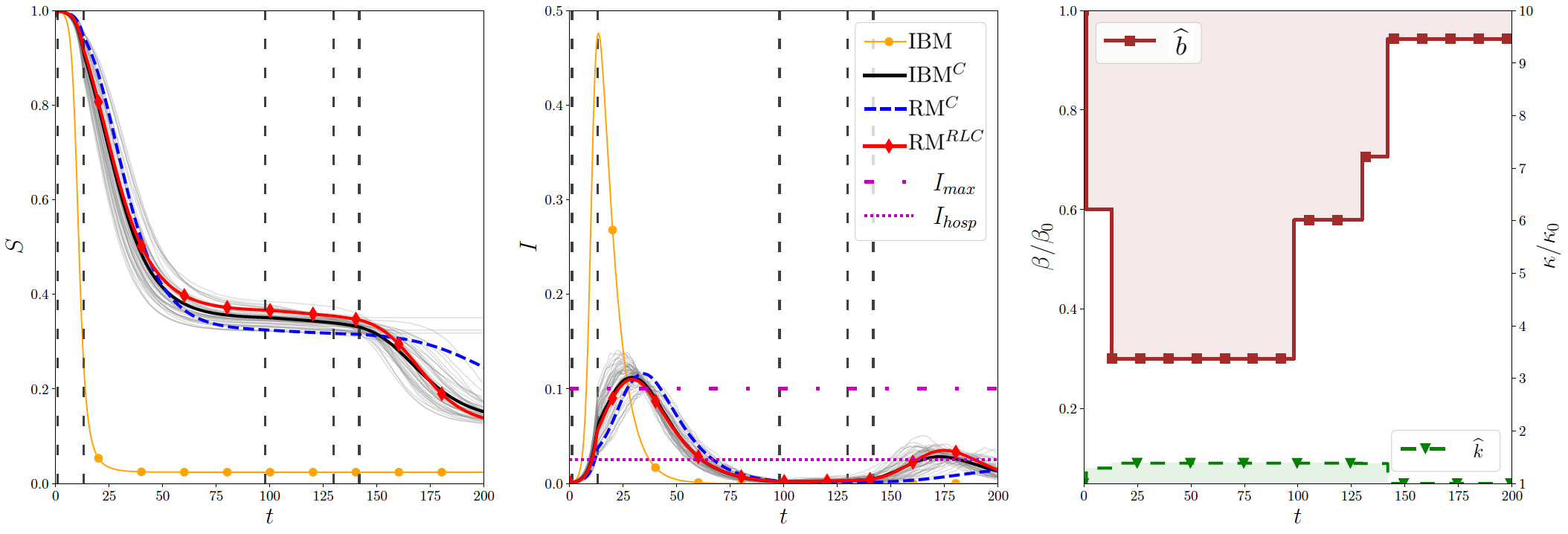}
     \includegraphics[width=\linewidth]{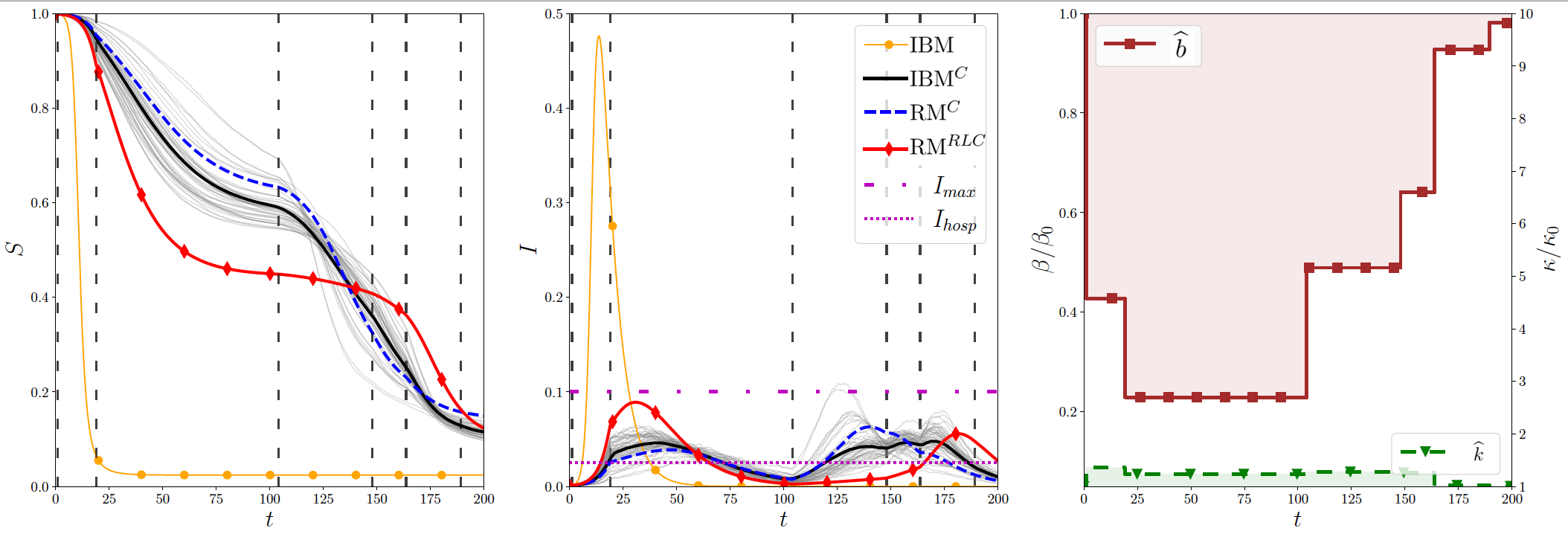}
    \caption[flushleft]{$n=0.95$, $\beta_0=0.8$, $\kappa_0=9$. Top: $18$ iterations. Bottom: $25$ iterations  }
    \label{fig:examp_RL_loc_11}
\end{figure}
The second mechanism we identified relates to the limitations of approximating a fundamentally stochastic system (the IBM) by means of a deterministic reduced model. In other words, when the parameter regime is such that randomness is dominating the behaviour of the system (e.g. all three parameters $n$, $\beta$ and $\kappa$ are low), approximating the average trajectory of the IBM is very challenging, as can be seen in Figure \ref{fig:examp_RL_loc_12}. Indeed, in this setting, computing the incidence function $F_\theta(S(t),I(t);n,\beta,\kappa)$ is particularly sensitive to errors. In addition, this range of parameters is scarcely represented in the initial data set $\mathcal{D}_0$ (see Table \ref{tab:param_range}). The latter could partially explain the difficulties observed in Figure \ref{fig:examp_RL_loc_12}. However, enriching $\mathcal{D}_0$ may not be recommended because the previous examples showed that, in a second step, the reduced model would probably have difficulty specializing around the controlled trajectory.

\begin{figure}[ht!]
    \centering
    \includegraphics[width=\linewidth]{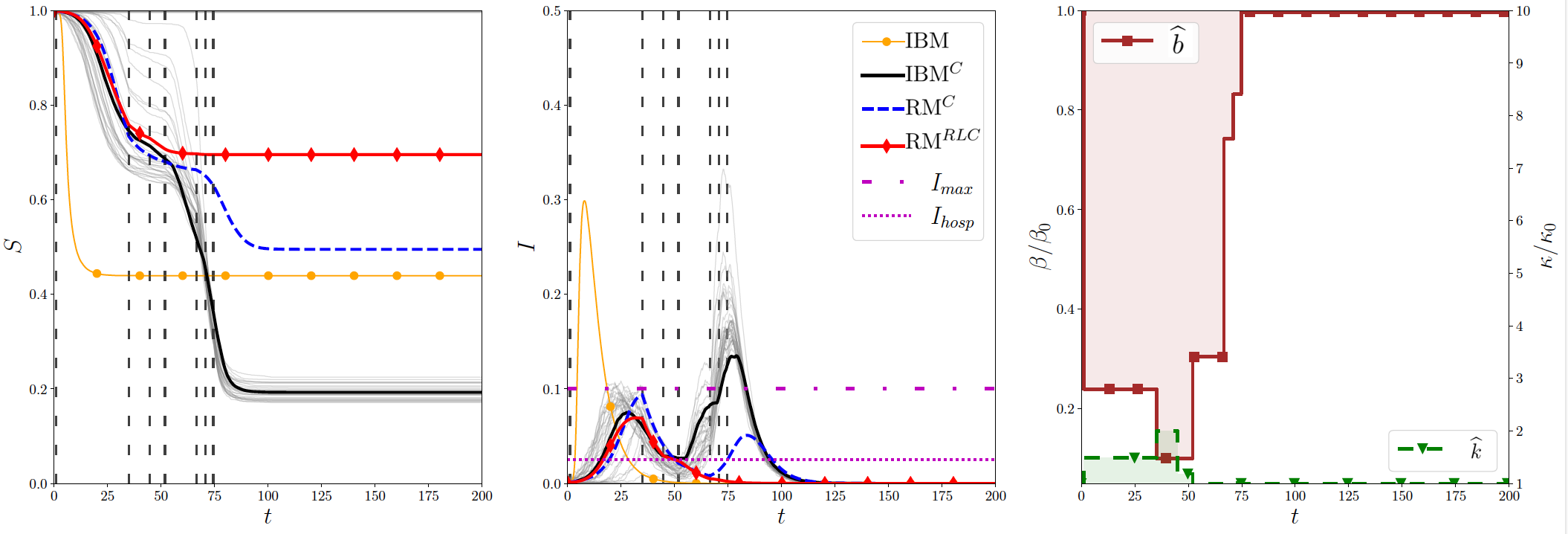}
    \caption[flushleft]{$n=0.15$, $\beta_0=0.7$, $\kappa_0=0.4$, $33$ iterations }
    \label{fig:examp_RL_loc_12}
\end{figure}

\section{Conclusion and perspectives}

In this work we have proposed a control method for an individual-based, stochastic epidemic model taking into account super-spreaders. For this purpose, we proposed a model-based reinforcement approach where we alternate between the learning phase of a reduced model and the control phase. Our approach can be interpreted as a Model Predictive Control (MPC) type method. In the literature on model-based methods, it is common to use global models for all states, or linear and local models around the current state. Here we propose a compromise by building a non-linear model valid for a certain sub-region of the possible values of the controls. Then the reduced model is solved by optimal control approaches for ODEs. The iterative algorithm allows us to build a control for the original IBM based on the one computed for the learned reduced model. The results show the ability of the algorithm to compute efficient controls in classical regimes (low dispersion, large population) as well as in more complicated regimes, as they are generally more stochastic, when the population size ratio is small and contact heterogeneity large. This algorithm involves building a reduced SIR model, relying on a neural network, which takes into account the effects of small population and dispersion effects associated with super-spreaders. Constructing the latter model also provides tools to study the effects of contact heterogeneity (dispersion) in epidemics. Indeed, since we learn a SIR-type model where the incidence function $F_{\theta}(S,I;n,\kappa,\beta)$ is differentiable, we could, for instance, derive by analytical means a formula for the basic reproduction ratio $\mathcal{R}_0$. Hence, it is possible to investigate the dependence of the latter on the dispersion and the population size, which is difficult to estimate for individual-based models. In a general way, building a reduced SIR-type model with a neural network from heavier simulations may be an interesting way to study phenomena that are not easily understood in large models such as the epidemic threshold, the group epidemic threshold, etc. One of the limitations of our approach is that we only aim at controlling the mean trajectory of the individual-based model, but it could be relevant and interesting to take into account the variance.

\section*{Acknowledgements}
The last author were partially supported by the ANR Project “TRECOS”.


\bibliographystyle{abbrv}
\bibliography{biblio.bib}

\appendix
\renewcommand{\thesection}{\Alph{section}.\arabic{section}}
\setcounter{section}{0}

\begin{appendices}
\section{Averaging the IBM output}
\label{apdx:mean_traj_calculation}

Suppose we are interested in averaging $P\in \NN^*$ runs of the IBM over the time interval $[0, T]$ and let $M \geq 2$ be the number of points of a regular subdivision of this time interval with time-step $\Delta t.$ 

For each trajectory $p\in\{1,2,\ldots,P\}$, run the IBM and refer to the resulting discrete output as $S_p$ and $I_p$, both of which are elements of $\RR^M$. Their values at time $m\Delta t$ are respectively denoted $S_p^m$ and $I_p^m$ for $m\in\{1,2,\ldots,M\}$. Note that we have $R_p = 1 - S_p - I_p.$

 The $p$-th trajectory is considered an outlier whenever the size of recovered population ends up being underestimated in the following sense
$$ R_p^M - R_p^0 \leq  0.8 \times R_{\max}, $$
where $R_{\max}=\max \{R_p^M, \, p=1,\ldots,P\}$. In other words, all trajectories leading to immediate extinction are excluded since they would otherwise pull down the pointwise values of the mean trajectory. 

Next step is to find the average time of the first epidemic onset. For each $p$, let
\begin{equation}\label{eq:onset_time}\tau_p = \min \{ m\Delta t : I_p^m -I_p^0 > 10^{-3}, \, m=1,\ldots,M\}, \end{equation}
if the involved set is non-empty and zero otherwise. Based on these values, compute the mean time 
$$\Bar{\tau} = \frac{1}{P}\sum_{p=1}^P \tau_p,$$
and find for each trajectory $p$ the number of time-steps $d_p \in \ZZ$ by which the outbreak time is delayed (or in advance) with respect to the mean value $\Bar{\tau}$, that is
$$\forall p, \qquad  d_p = \left \lfloor \frac{\tau_p - \Bar{\tau}}{\Delta t}\right \rfloor.$$

Before averaging the trajectories, time-translate each trajectory $p$ by $d_p$ time-steps $\Delta t$ and denote $\Tilde{S}_p, \Tilde{I}_p$ the resulting vectors. To keep vectors of the same size, we extend the vector by constant values on the left or right depending on the sign of the translation:
\begin{align*}
    &\text{for }d_p > 0,&&\text{for }d_p < 0,&\\
    &\tilde{S}^m = \left\{\begin{array}{ccl}
				                                            S^{d_p+m}&& \mathrm{if}\; 1\leq m \leq M-d_p,\\
				                                            S^M      &&  \mathrm{if}\; M-d_p+1 \leq m \leq M,
		                                                    \end{array}\right.
	&&\Tilde{S}^m = \left\{\begin{array}{ccl}
				                                            S^{0}&& \mathrm{if}\; 1\leq m \leq |d_p|,\\
				                                            S^{|d_p|+m}      &&  \mathrm{if}\; |d_p|+1 \leq m \leq M.
		                                                    \end{array}\right.
\end{align*}
We therefore implicitly assume that the trajectories do not vary too much at the beginning and end of the simulations on a time scale $|d_p|\Delta t$. Lastly, we compute the point-wise average according to $$\forall m \in \{1,2,\ldots,M\}, \qquad \Bar{S}^m = \frac{1}{P}\sum_{p=1}^P \Tilde{S}^m_p,\ \Bar{I}^m=\frac{1}{P}\sum_{p=1}^P \Tilde{I}^m_p.$$

Note that in Definition~\eqref{eq:onset_time}, the threshold $10^{-3}$ offers a decent compromise. Indeed, the higher this value is, the more accurate the estimation of the family $(\tau_p)_p$ is, but at the same time, the higher the risk that some very stochastic trajectories reach the threshold much later (or earlier) than the others, resulting in a biased mean value $\Bar{\tau}.$ 

We wish to draw the reader's attention to the following observation: when we run the IBM with piece-wise constant parameters $\beta$ and $\kappa$, epidemic rebounds may occur several times in a given simulation, e.g. in Figures \ref{fig:pwcSubFig3} and \ref{fig:examp_RL_loc_7}. Nevertheless, since it is in the early stage of the epidemic that immediate extinctions are the most likely (due to stochasticity and very low proportions of infected people), translating the individual trajectories solely based on the time of the \emph{first} epidemic onset remains \textit{a priori} a reasonable assumption.

\section{More careful derivation of $\mathcal{R}_0$}
\label{apdx:rigorous_R0_derivation}
\paragraph{} The derivation proposed in Section \ref{sec:key-epidemio_quantities} is incomplete because one of the hypotheses required for applying the next-generation matrix theory is \emph{not} satisfied by the reduced model \eqref{eq:learned_model}. The fifth assumption stated in the paper of P. van den Driessche and J. Watmough  \cite{vandendriesscheFurtherNotesBasic2008} is lacking: in our case, it states that the ODE should have, provided no infected individuals ($I\equiv 0$), a \emph{unique asymptotically stable} equilibrium point, the so-called disease-free equilibrium (DFE). However, in the model \eqref{eq:learned_model}, there exists an infinite number of equilibrium points of the form $(S^*,0)$ for any $S^* \in \RR$ and none of them is asymptotically stable. Nevertheless, in order to fit to the theoretical framework, we can add \emph{demographic dynamics}, through birth and death rates, leading to a stabilizing population size ratio and such that the only disease-free equilibrium point is $(S^*,I^*)=(1,0)$. Given the uncertainty about the long term accuracy of the reduced model, the order of magnitude of the time horizons up to which the model is to be run is about 100 days. Moreover, since the demographic dynamics occur on a much large \emph{time scale} (years if not decades), this modelling assumption seems reasonable and will not strongly affect the dynamics arising from the reduced model \eqref{eq:learned_model}.

Therefore, we insert birth and death dynamics into the model:
\begin{align}
\begin{array}{c c c}
				S^\prime &=&-f_\theta(S,I;n,\beta,\kappa)SI + \mu - \mu S,\\
                I^\prime  &= & f_\theta(S,I;n,\beta,\kappa)SI-\gamma I - \mu I,
		\end{array}
		\label{eq:birth_death_learned_model}
\end{align}
where $\mu > 0$ stands for the population constant birth and death rates.  First, observe that $(S^*,I^*)=(1,0)$ is indeed the only disease-free state value making the dynamics of \eqref{eq:birth_death_learned_model} stationary.

Moreover, any solution with initial condition $(S_\mathrm{in}, 0)$, with $S_\mathrm{in} \in \RR$ converges to the unique equilibrium point $(1,0)$.

Then for any $\mu > 0$, the next-generation matrix theory, can be applied to model \eqref{eq:birth_death_learned_model}, leading to an expression of the threshold number, say $\mathcal{R}_0^\dagger=\mathcal{R}_0^\dagger(\mu)$. Indeed, let $\mathcal{F}$ denote the rate at which secondary infections increase in the infected compartment and $\mathcal{V}$ the sum of the rates at which the disease progresses and infected individuals die. We have that
\begin{align*}
I^\prime = \mathcal{F}(S,I)-\mathcal{V}(I), \qquad \qquad   \text{ with }\quad             \left\{\begin{array}{c c c}
				\mathcal{F}(S,I) &=& f_\theta(S,I;n,\beta,\kappa)SI,\\
                \mathcal{V}(I) &=& (\gamma  + \mu )I.
		\end{array}\right.
\end{align*}
Among the four other assumptions stated in \cite{vandendriesscheFurtherNotesBasic2008}, three of them are straightforward to verify. The last one states that the rate of secondary infections be positive or zero whenever susceptible or infected individuals remain ($\mathcal{F}(S,I)\geq 0$ for any $S,I \geq 0$). The latter was checked numerically for more than 10,000 randomly chosen different combinations of positive values. The requirement did not fail to hold and we can thus define the threshold number.

In the particular case of model \eqref{eq:birth_death_learned_model}, the next-generation matrix is actually a scalar and coincides with the threshold number:
\begin{equation*}
     \mathcal{R}_0^\dagger(\mu) = \left. \frac{\partial_I F_\theta }{\gamma+\mu}\right|_{(S=1,I=0,n,\beta,\kappa)}.
\end{equation*}
As $\mu \to 0$, we recover the expression of $\mathcal{R}_0$ given by Eq. \eqref{eq:R0_next_gen_matrix}.

\section{Properties of the controlled model}\label{apdx:well_posed_controlled_system}

\subsection*{Well-posedness and qualitative properties} 

It is notable that if $F_\theta$ is assumed to be locally Lipschitz with respect to $(S,I)$, continuous with respect to its other variables, then System~\eqref{eq:controlled_system} is well-posed according to the Carathéodory's existence theorem \cite[Theorem 1.1 of Chapter 2]{MR0069338}: it has a unique solution that belongs to $W^{1,\infty}(T_c,T;\RR^3)$.

Since we are interested in implementing an algorithm based on gradient like iterations, and in particular at deriving first order optimality conditions, we will further assume in what follows that $F_\theta$ satisfies \eqref{hypFtheta}.

The qualitative properties on $(S,I)$ follow from the uniqueness property above and the fact that $F_\theta$ is of the particular form \eqref{Ftheta_part}. Indeed, since $F_\theta(0,\cdot,\cdot,\cdot,\cdot)=F_\theta(\cdot,0,\cdot,\cdot,\cdot)=0$, the semi-axis $\{S=0, \ I\geq0\}$ and $\{S\geq 0,\ I=0\}$ correspond to particular orbits of System~\eqref{eq:controlled_system}. Therefore, a component of the solution $(S,I)$ associated with positive initial data cannot vanish. Furthermore, since initial data have been chosen in such a way that $S+I+R= 1$ at every time, and since $R$ is obviously non-negative, we infer that $\max \{S,I\}\leq 1-R\leq 1$ at every time. 

\subsection*{Analysis of the optimal control problem~\eqref{eq:OCPreg}}\label{sec:analysisOCP}
Before stating the first order optimality conditions for Problem~\eqref{eq:OCPreg}, let us first investigate existence properties for Problem~\eqref{eq:OCPreg}. 
\begin{lemma}
Let $\delta>0$. Problem~\eqref{eq:OCPreg} has a solution $(b_\delta,k_\delta)$.
\end{lemma}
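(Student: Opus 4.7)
The plan is to apply the direct method of the calculus of variations, exploiting the BV regularization to extract a strongly convergent subsequence from a minimizing sequence. First I would note that $J_\delta \geq 0$ and that the constant choice $(b,k) \equiv (1,1) \in \mathcal{U}$ yields a finite value (the corresponding $I$ stays in $[0,1]$ by the qualitative properties of \eqref{eq:controlled_system} recalled above), so $\inf_{\mathcal U} J_\delta$ is finite. Take a minimizing sequence $(b_n, k_n)_{n\in\NN} \subset \mathcal{U}$: by definition of $\mathcal{U}$ it is uniformly bounded in $L^\infty(T_c, T)$, and from $\delta(\operatorname{TV}(b_n) + \operatorname{TV}(k_n)) \leq J_\delta[b_n, k_n] \leq \inf J_\delta + 1$ for $n$ large, it is also uniformly bounded in $\operatorname{BV}(T_c, T)$. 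By Helly's selection theorem (equivalently, the compact embedding $\operatorname{BV}(T_c, T) \hookrightarrow L^1(T_c, T)$), there exist $b_\delta, k_\delta \in \operatorname{BV}(T_c,T)$ such that, up to extraction, $b_n \to b_\delta$ and $k_n \to k_\delta$ in $L^1$ and almost everywhere. The a.e. convergence together with the $L^\infty$ bounds forces $(b_\delta, k_\delta) \in \mathcal{U}$.

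The main obstacle is passing to the limit in the state-dependent part of $J$, since it depends nonlinearly on $(b, k)$ through the solution of \eqref{eq:controlled_system}. I would establish continuous dependence of this solution on the control as follows: under \eqref{hypFtheta} and the continuity of $v$ on $[1, k_{\max}]$, the right-hand side $g$ of \eqref{eq:controlled_system} is Lipschitz in $(S,I)$ and Lipschitz in the parameters, uniformly on the relevant compact set. A standard Grönwall estimate applied to the integral form of \eqref{eq:controlled_system} then gives
$$\|(S_n, I_n) - (S_\delta, I_\delta)\|_{L^\infty(T_c, T)} \leq C \int_{T_c}^T \bigl(|b_n(t) v(k_n(t)) - b_\delta(t) v(k_\delta(t))| + |k_n(t) - k_\delta(t)|\bigr) \, dt,$$
where $(S_n,I_n)$ and $(S_\delta, I_\delta)$ denote the trajectories of \eqref{eq:controlled_system} associated with $(b_n,k_n)$ and $(b_\delta,k_\delta)$. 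The right-hand side tends to $0$ by dominated convergence, using the a.e. convergence of $(b_n,k_n)$, the continuity of $v$, and the uniform $L^\infty$ bounds. Hence $I_n \to I_\delta$ uniformly on $[T_c, T]$.

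To conclude, I would pass to the limit in each piece of $J_\delta$. The terms $\omega_\beta(1-b)^2$ and $\omega_\kappa(k-1)^2$ converge by dominated convergence (a.e. convergence combined with bounded integrands); the terms involving $I$ converge thanks to the uniform convergence of $I_n$ and the continuity of $x \mapsto (x/I_\star - 1)_+^2$; and the total variation satisfies $\operatorname{TV}(b_\delta) + \operatorname{TV}(k_\delta) \leq \liminf_n (\operatorname{TV}(b_n) + \operatorname{TV}(k_n))$ by lower semicontinuity of the total variation with respect to $L^1$ convergence. Adding these contributions yields $J_\delta[b_\delta, k_\delta] \leq \liminf_n J_\delta[b_n, k_n] = \inf_\mathcal{U} J_\delta$, which proves that $(b_\delta, k_\delta)$ is a minimizer of \eqref{eq:OCPreg}.
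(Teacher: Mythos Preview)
Your proof is correct and follows the same overall architecture as the paper: direct method, BV compactness of a minimizing sequence, lower semicontinuity of the total variation, and dominated convergence for the integral terms of $J$.

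The one genuine difference lies in how you handle convergence of the states. The paper argues indirectly: it shows $(S_p,I_p)$ is bounded in $W^{1,\infty}(T_c,T)$, extracts a $C^0$-convergent subsequence by Ascoli--Arzel\`a, and then identifies the limit as the trajectory associated with $(b_\delta,k_\delta)$ by passing to the limit in the integral form of \eqref{eq:controlled_system} via dominated convergence. You instead prove a quantitative continuous-dependence estimate through Gr\"onwall's lemma, directly obtaining $\|(S_n,I_n)-(S_\delta,I_\delta)\|_{L^\infty}\to 0$. Your route is shorter and yields a stronger statement (no further subsequence extraction is needed), but it relies on the Lipschitz dependence of $F_\theta$ on its parameter arguments, which is exactly what \eqref{hypFtheta} provides. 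The paper's compactness argument, by contrast, only uses continuity of $F_\theta$ and would survive under weaker regularity. Either approach is perfectly adequate here.
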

\begin{proof}
Let $(b_p,k_p)_{p\in\NN}$ denote a minimizing sequence for Problem~\eqref{eq:OCPreg}. Since all terms of the cost functional are non-negative, the sequence $(\operatorname{TV}(b_p)+\operatorname{TV}(k_p)_{p\in\NN}$ is bounded. Since  $(b_p)_{p\in\NN}$ and  $(k_p)_{p\in\NN}$ are uniformly bounded in $L^\infty(T_c,T)$, it follows that $(\Vert (b_p,k_p)\Vert_{\operatorname{BV}(T_c,T)})_{p\in\NN}$ is bounded, and we infer that $(b_p,k_p)_{p\in\NN}$ converges up to a subsequence to some element $(b_\delta,k_\delta)\in \operatorname{BV}(T_c,T)$ in $L^1(T_c,T)$ and in particular pointwisely. In what follows, when there is no ambiguity, we will denote similarly a sequence and a subsequence with a slight abuse of notation. The pointwise convergence implies that $b_{\mathrm{min}}\leq b_\delta(\cdot) \leq 1$ and $1\leq k_\delta(\cdot) \leq k_{\mathrm{max}}$ a.e in $(T_c,T)$ which yields that $(b_\delta,k_\delta)$ belongs to $\mathcal{U}$. 

Let us denote by $(S_p,I_p,R_p)$ the solution to \eqref{eq:controlled_system} for the control choice $(b,k)=(b_p,k_p)$. Since $S_p+I_p+R_p$ is constant in $[T_c,T]$ and since $S_p$ and $I_p$ are non-negative because of the particular form of $F_\theta$ given by \eqref{Ftheta_part} and according to \eqref{hypFtheta}, it follows that $(S_p)_{p\in\NN}$ and $(I_p)_{p\in\NN}$ are uniformly bounded in $L^\infty(T_c,T)$. Since $f_\theta$ is assumed to be continuous with respect to each variable, it follows from \eqref{eq:controlled_system} that  $(S_p)_{p\in\NN}$ and $(I_p)_{p\in\NN}$ are uniformly bounded in $W^{1,\infty}(T_c,T)$. According to the Ascoli theorem, up to a subsequence, $(S_p)_{p\in\NN}$ and $(I_p)_{p\in\NN}$ converge in $C^0([T_c,T])$ to some element $(S_\delta,I_\delta)\in W^{1,\infty}(T_c,T)$.

Now, let us recast \eqref{eq:controlled_system} as
\begin{align}
\left\{\begin{array}{c c c}
				S_p(t) &=& S_c-\int_{T_c}^tF_\theta(S_p,I_p;n,\beta_0 b_pv(k_p),\kappa_0 k_p),\\
				I_p(t)  &= & I_c+\int_{T_c}^t F_\theta(S_p,I_p;n,\beta_0 b_pv(k_p),\kappa_0 k_p)-\gamma I_p,\\
		\end{array}\right.
		\label{eq:learned_model_seq}
\end{align}
for all $t\in [T_c,T]$. Since $F_\theta$ is assumed to be (at least) continuous with respect to any of its variable, passing to the limit in these equation follows straightforwardly from the Lebesgue dominated convergence theorem. We get
\begin{align}
\left\{\begin{array}{c c c}
				S_\delta(t) &=& S_c-\int_{T_c}^tF_\theta(S_\delta,I_\delta;n,\beta_0 b_\delta v(k_\delta),\kappa_0 k_\delta),\\
				I_\delta(t)  &= & I_c+\int_{T_c}^t F_\theta(S_\delta,I_\delta;n,\beta_0 b_\delta v(k_\delta),\kappa_0 k_\delta)-\gamma I_\delta,\\
		\end{array}\right.
\end{align}
for all $t\in [T_c,T]$, yielding that $(S_\delta,I_\delta)$ satisfies \eqref{eq:controlled_system}. We conclude by noting that, according to the Lebesgue dominated convergence theorem, one has
$$
\lim_{p\to +\infty}J(b_p,k_p)=J(b_\delta,k_\delta).
$$
By semicontinuity of the $\operatorname{TV}$ seminorm for the $L^1$ convergence, one has
$$
\operatorname{TV}[b_\delta]+\operatorname{TV}[k_\delta]\leq \liminf_{p\to +\infty}(\operatorname{TV}[b_p]+\operatorname{TV}[k_p]),
$$
leading to 
$$
J_\delta [b_\delta,k_\delta]\leq \liminf_{p\to +\infty}J_\delta [b_p,k_p] =\inf_{(b,k)\in \: \mathcal{U}}J_\delta[b,k].
$$
This concludes the proof.
\end{proof}

\subsection*{Computation of the differential of $J$ and first order optimality conditions}
\begin{proof}[Proof of Theorem~\ref{prop:diffJ}]
The first claim of the statement, related to the differentiability of $S$ and $I$ with respect to $[b,k]$ follows directly from the so-called Pontryagin maximum principle (see e.g. \cite{LeeMarkus}). The differentiability of $J$ is hence straightforward. Let $[b,k] \in \mathcal U$ and $[h_1,h_2]$ be an admissible perturbation. It remains to compute $  \langle dJ[b,k],[h_1,h_2]\rangle$. 

Let us introduce $\widehat{S}_1$, $\widehat{I}_1$ (resp. $\widehat{S}_2$, $\widehat{I}_2$) as the differentials of the mappings $b\mapsto S$, $b\mapsto I$ at $b$ in the direction $h_1$ (resp. the differentials of $k\mapsto S$, $k\mapsto I$ at $k$ in the direction $h_2$). In what follows, we will temporarily drop the variables $(S,I;n,\beta_0 bv(k),\kappa_0 k)$ in the quantities involving $F_\theta(S,I;n,\beta_0 bv(k),\kappa_0 k)$ in order to alleviate notations.

These functions solve the following ODE system:
\begin{equation}\label{metz:1815}
\frac{d}{dt}\begin{pmatrix}
\widehat{S}_1\\ \widehat{I}_1
\end{pmatrix}=M_\theta
\begin{pmatrix}
\widehat{S}_1\\ \widehat{I}_1
\end{pmatrix}+h_1\begin{pmatrix}
- \beta_0 v(k) \partial_4 F_\theta \\
 \beta_0 v(k) \partial_4 F_\theta 
\end{pmatrix}
\end{equation}
and
\begin{eqnarray}\label{metz:1816}
\frac{d}{dt}\begin{pmatrix}
\widehat{S}_2\\ \widehat{I}_2
\end{pmatrix}&=&M_\theta
\begin{pmatrix}
\widehat{S}_2\\ \widehat{I}_2
\end{pmatrix} +h_2 \begin{pmatrix}
- \beta_0bv'(k) \partial_4 F_\theta -\kappa_0 \partial_5 F_\theta \\
 \beta_0bv'(k) \partial_4 F_\theta +\kappa_0 \partial_5 F_\theta
\end{pmatrix}
\end{eqnarray}
completed with the initial conditions 
$$
\widehat{S}_i(T_c)=\widehat{I}_i(T_c)=0, \quad i=1,2,
$$
where $M_\theta$ is given by \eqref{def:mtheta}.

By using standard differentiation rules, one gets
 \begin{eqnarray*}
 \langle dJ[b,k],[h_1,h_2]\rangle &=&  \int_{T_c}^T \omega_\beta h_1  \left(b-1\right) 
                                      + \omega_\kappa h_2 \left(k-1\right) \\
                                      && + \int_{T_c}^T\frac{\omega_{\mathrm{hosp}}}{{I_{\mathrm{hosp}}}} (\widehat{I}_1+\widehat{I}_2)\left(\frac{I}{I_{\mathrm{hosp}}}-1\right)_+ 
                                      + \frac{1}{ I_{\mathrm{max}} \varepsilon} (\widehat{I}_1+\widehat{I}_2) \left(\frac{I}{I_{\mathrm{max}}}-1\right)_+.
 \end{eqnarray*}
Now, let us multiply System~\eqref{metz:1815} by $(p_1,q_1)$ and  System~\eqref{metz:1816} by $(p_2,q_2)$ in the sense of the inner product. By integrating by parts, one gets successively
$$
\left.\begin{pmatrix}
p_1\\ q_1
\end{pmatrix}\cdot \begin{pmatrix}
\widehat{S}_1\\ \widehat{I}_1
\end{pmatrix}\right|_{t=T}+
\int_{T_c}^T\left( -\frac{d}{dt}\begin{pmatrix}
p_1\\ q_1
\end{pmatrix}-M_\theta^\top \begin{pmatrix}
p_1\\ q_1
\end{pmatrix}\right)\cdot \begin{pmatrix}
\widehat{S}_1\\ \widehat{I}_1
\end{pmatrix}=\int_{T_c}^T h_1\begin{pmatrix}
p_1\\ q_1
\end{pmatrix}\cdot \begin{pmatrix}
- \beta_0 v(k) \partial_4 F_\theta \\
 \beta_0 v(k) \partial_4 F_\theta 
\end{pmatrix}
$$
and
$$
\left.\begin{pmatrix}
p_2\\ q_2
\end{pmatrix}\cdot \begin{pmatrix}
\widehat{S}_2\\ \widehat{I}_2
\end{pmatrix}\right|_{t=T}+
\int_{T_c}^T\left( -\frac{d}{dt}\begin{pmatrix}
p_2\\ q_2
\end{pmatrix}-M_\theta^\top \begin{pmatrix}
p_2\\ q_2
\end{pmatrix}\right)\cdot \begin{pmatrix}
\widehat{S}_2\\ \widehat{I}_2
\end{pmatrix}=\int_{T_c}^T h_2\begin{pmatrix}
p_2\\ q_2
\end{pmatrix}\cdot \begin{pmatrix}
- \beta_0bv'(k) v(k) \partial_4 F_\theta -\kappa_0 \partial_5 F_\theta \\
 \beta_0bv'(k) v(k) \partial_4 F_\theta +\kappa_0 \partial_5 F_\theta
\end{pmatrix}.
$$
Using that $(p_1,q_1,p_2,q_2)$ solves the linear system \eqref{eq:adjoint} yields 
\begin{eqnarray*} 
\lefteqn{
\int_{T_c}^T\frac{\omega_{\mathrm{hosp}}}{{I_{\mathrm{hosp}}}} (\widehat{I}_1+\widehat{I}_2)\left(\frac{I}{I_{\mathrm{hosp}}}-1\right)_+ 
                                      + \frac{1}{ I_{\mathrm{max}} \varepsilon} (\widehat{I}_1+\widehat{I}_2) \left(\frac{I}{I_{\mathrm{max}}}-1\right)_+=}\\
&& \int_{T_c}^T h_1\begin{pmatrix}
p_1\\ q_1
\end{pmatrix}\cdot \begin{pmatrix}
- \beta_0 v(k) \partial_4 F_\theta \\
 \beta_0 v(k) \partial_4 F_\theta 
\end{pmatrix}+\int_{T_c}^T h_2\begin{pmatrix}
p_2\\ q_2
\end{pmatrix}\cdot \begin{pmatrix}
- \beta_0bv'(k)  \partial_4 F_\theta -\kappa_0 \partial_5 F_\theta \\
 \beta_0bv'(k)   \partial_4 F_\theta +\kappa_0 \partial_5 F_\theta
\end{pmatrix},
\end{eqnarray*}
whence the desired expression of the differential. 

Let us now prove the last statement of this theorem. Let $[b,k]$ denote a solution to Problem~\eqref{eq:OCPreg}. Let us introduce the so-called indicator function $\iota_\mathcal{U}$ to the set $\mathcal{U}$, given by
$$
\iota_\mathcal{U}(x)=\left\{\begin{array}{ll}
0 & \text{if }x\in \mathcal{U}\\
+\infty & \text{else}.\end{array}\right.
$$
The functional $J_\delta$ is not differentiable in a standard sense because of the $\operatorname{TV}$ terms. For this reason, we will use subdifferentials to derive first order optimality conditions. We first claim that Problem~\eqref{eq:OCPreg} is equivalent to the optimization problem
$$
\inf_{(b,k)\in L^\infty(T_c,T)}J[b,k]+\delta (\operatorname{TV}[b]+\operatorname{TV}[k])+\iota_\mathcal{U}((b,k)).
$$
The standard first order optimality condition reads
$$
0\in \partial \left(J[b,k]+\delta (\operatorname{TV}[b]+\operatorname{TV}[k])+\iota_\mathcal{U}((b,k))\right).
$$
which rewrites
$$
\left\{\begin{array}{l}
-\partial_b J\in \partial\operatorname{TV}(b)+\partial \iota_{[ b_{\mathrm{min}},1]}\\
-\partial_k J\in \partial\operatorname{TV}(k)+\partial \iota_{[ 1,k_{\mathrm{max}}]}\\
\end{array}
\right.
$$
and therefore, there exist $T_b\in \partial \operatorname{TV}(b)$ and $T_k\in \partial \operatorname{TV}(k)$ such that the Euler inequation
$$
\forall (B,K)\in L^\infty(T_c,T;[ b_{\mathrm{min}},1])\times L^\infty(T_c,T;[ 1,k_{\mathrm{max}}]), \quad 
\left\{\begin{array}{l}
\langle \partial_b J-T_b,B-b\rangle_{L^2(T_c,T)} \geq 0\\
\langle \partial_k J-T_k,K-k\rangle_{L^2(T_c,T)} \geq 0\\
\end{array}
\right.
$$
holds true and the expected conclusion follows.
Note that, since it is not our main purpose, we do not provide details in this article but refer for instance to \cite{MR3801235} for explicit characterizations of such sets.
\end{proof}

\section{Numerical implementation}\label{apdx:numerical_implementation}
Since the adjoint system is state-dependent, given an initial estimate of the control, we first determine the set of $\{(S,I)(t),\ t\in \mathcal{S}\}$ (e.g., using an explicit fourth-order Runge-Kutta numerical scheme), and then $\{(p_1,q_1,p_2,q_2)(t),\ t\in \mathcal{S}\}$.

The regularization term appearing in the problem \eqref{eq:OCPreg} is computed using the following approximation of the total variation (see \cite[§1.30]{giustiMinimalSurfacesFunctions1984}:
$$ 
\operatorname{TV}[q] \simeq \sum _m\left|q(t^m)-q(t^{m-1})\right|, 
$$
where the family of points $(t^m)$ belongs to $\mathcal{S}$. Note that, in the expression of the total variation, the partition used for the calculation is not arbitrary.

In addition, notice that the resulting regularisation term involves the non-differentiable absolute value function. Since this term contributes to the gradient of the cost functional $J_\delta$, we use the following computationally efficient smooth approximation \cite{ramirez2013x2+}:
$|x|\simeq \sqrt{\eta + x^2}$, for all $x\in \RR$,
where $\eta$ is taken equal to $10^{-6}$. 

A concise description of the optimal projected gradient approach is given in Algorithm~\ref{alg:gradient} where $u\in \RR^{2\times |\mathcal{S}|}$ denotes\footnote{$|\mathcal{S}|$ denotes the cardinality of the set $\mathcal{S}$.} the current (discrete) control approximation to the vector-valued solution of the problem \eqref{eq:OCPreg}. In addition, the input $u_0$ is an initial guess for the control values whereas the parameters $N_\mathrm{g}$ and $\tau_\mathrm{g}$ are respectively the maximum number of iterations of the algorithm and a tolerance. Lastly, $\mathcal{P}_\mathcal{U}$ refers to the map which projects the discrete components $ b\in \RR^{1\times |\mathcal{S}|}$ and $ k\in \RR^{1\times |\mathcal{S}|}$ of $u$ according to
$$ \mathcal{P}_\mathcal{U} : u=\begin{pmatrix}
b_1, b_2, \ldots, b_{|\mathcal{S}|} \\
k_1, k_2, \ldots, k_{|\mathcal{S}|}
\end{pmatrix} \mapsto \begin{pmatrix}
P_b(b_1), \ldots, P_b(b_{|\mathcal{S}|}) \\ P_k(k_1), \ldots, P_k(k_{|\mathcal{S}|})
\end{pmatrix},$$
where $P_b : x \mapsto \min\left(1,\max \left(b_{\mathrm{min}}, x\right) \right)$ and $P_k:  x \mapsto \min\left(k_\mathrm{max},\max \left(1, x\right) \right)$ are defined on $\RR$.

\begin{algorithm}[h!]
\caption{Optimal step size projected gradient} \label{alg:gradient}
\KwRequire{Partition $\mathcal{S}$ of $[T_c, T].$}
\KwInput{$u_0\in \RR^{2\times |\mathcal{S}|}$, $N_\mathrm{g}\in \NN^*$, $\tau_\mathrm{g}> 0$.}
    \vspace{5pt}
      \begin{flushleft}
   $\begin{matrix}
       p &\xleftarrow{}& 0,\\
       u &\xleftarrow{}& u_0, \\
       j_\mathrm{old} &\xleftarrow{}& +\infty,\\
       j_\mathrm{new}, j_0 &\xleftarrow{}& J_\delta[u_0], \\
       \nabla j &\xleftarrow{}& \nabla J_\delta[u_0].
  \end{matrix}$
    \end{flushleft}
   \vspace{5pt}

 \While{$p < N_\mathrm{g}\; \mathrm{and} \;(j_\mathrm{old} - j_\mathrm{new}) > \tau_\mathrm{g} j_0 $}{
  \vspace{5pt}
  Golden-section search to find $\rho^*$ a local minimizer of $\rho \mapsto J_\delta \left[\mathcal{P}_\mathcal{U}\left(u - \rho \nabla j\right)\right],$\\
  $\mathrm{Update}~:$\\
  \begin{flushleft}
  $\begin{matrix}
       u &\xleftarrow{}& \mathcal{P}_\mathcal{U}\left(u - \rho^* \nabla j\right), \\
       j_\mathrm{old} &\xleftarrow{}& j_\mathrm{new},\\
       j_\mathrm{new} &\xleftarrow{}& J_\delta[u], \\
       \nabla j &\xleftarrow{}& \nabla J_\delta[u].
  \end{matrix}$
  \end{flushleft}
  \vspace{5pt}
  
  \If{$\nabla j \neq 0$}{
  \vspace{5pt}
   $\mathrm{Normalise}\;  \nabla j \xleftarrow{} \nabla j / \left||\nabla j|\right|,$
   }
   $p \xleftarrow{} p+1$.
 }
 \vspace{5pt}
 \KwReturn{$u$.}
\end{algorithm}
\end{appendices}

\end{document}